\makeatletter \@addtoreset{equation}{section}
\makeatletter \@addtoreset{enunciato}{section}
\newcounter{enunciato}[section]
\newtheorem{ittheorem}{Theorem}
\newtheorem{itlemma}{Lemma}
\newtheorem{itproposition}{Proposition}
\newtheorem{itdefinition}{Definition}
\newtheorem{itremark}{Remark}
\newtheorem{itclaim}{Claim}
\newtheorem{itfact}{Fact}
\newtheorem{itconjecture}{Conjecture}
\newtheorem{itcorollary}{Corollary}
\newenvironment{theorem}{\addtocounter{enunciato}{1}
\begin{ittheorem}}{\end{ittheorem}}
\newenvironment{lemma}{\addtocounter{enunciato}{1}
\begin{itlemma}}{\end{itlemma}}
\newenvironment{proposition}{\addtocounter{enunciato}{1}
\begin{itproposition}}{\end{itproposition}}
\newenvironment{definition}{\addtocounter{enunciato}{1}
\begin{itdefinition}}{\end{itdefinition}}
\newenvironment{remark}{\addtocounter{enunciato}{1}
\begin{itremark}}{\end{itremark}}
\newenvironment{conjecture}{\addtocounter{enunciato}{1}
\begin{itconjecture}}{\end{itconjecture}}
\newenvironment{corollary}{\addtocounter{enunciato}{1}
\begin{itcorollary}}{\end{itcorollary}}
\newcommand{\be}[1]{\begin{equation}\label{#1}}
\newcommand{\ee}{\end{equation}}
\newcommand{\bl}[1]{\begin{lemma}\label{#1}}
\newcommand{\el}{\end{lemma}}
\newcommand{\br}[1]{\begin{remark}\label{#1}}
\newcommand{\er}{\end{remark}}
\newcommand{\bt}[1]{\begin{theorem}\label{#1}}
\newcommand{\et}{\end{theorem}}
\newcommand{\bd}[1]{\begin{definition}\label{#1}}
\newcommand{\ed}{\end{definition}}
\newcommand{\bp}[1]{\begin{proposition}\label{#1}}
\newcommand{\ep}{\end{proposition}}
\newcommand{\bc}[1]{\begin{corollary}\label{#1}}
\newcommand{\ec}{\end{corollary}}
\newcommand{\bcj}[1]{\begin{conjecture}\label{#1}}
\newcommand{\ecj}{\end{conjecture}}
\newcommand{\bpr}{\begin{proof}}
\newcommand{\epr}{\end{proof}}
\def \Z {{\mathbb Z}}
\def \R {{\mathbb R}}
\def \N {{\mathbb N}}
\def \ba {\begin{array}}
\def \ea {\end{array}}
\def \da {\downarrow}
\def \P  {{\mathbb P}}
\def \E  {{\mathbb E}}
\begin{document}


\title{Scaling of a random walk on a\\ 
supercritical contact process}

\author{\renewcommand{\thefootnote}{\arabic{footnote}}
F.\ den Hollander \footnotemark[1]
\\
\renewcommand{\thefootnote}{\arabic{footnote}}
R.\ dos Santos \footnotemark[1]}

\footnotetext[1]{
Mathematical Institute, Leiden University, P.O.\ Box 9512,
2300 RA Leiden, The Netherlands}

\maketitle

\begin{abstract}
A proof is provided of a strong law of large numbers for a one-dimensional random 
walk in a dynamic random environment given by a supercritical contact process in 
equilibrium. The proof is based on a coupling argument that traces the space-time 
cones containing the infection clusters generated by single infections and uses 
that the random walk eventually gets trapped inside the union of these cones. For 
the case where the local drifts of the random walk are smaller than the speed at 
which infection clusters grow, the random walk eventually gets trapped inside a 
single cone. This in turn leads to the existence of regeneration times at which 
the random walk forgets its past. The latter are used to prove a functional 
central limit theorem and a large deviation principle.

The qualitative dependence of the speed and the volatility on the 
infection parameter is investigated, and some open problems are mentioned.

\vspace{0.5cm}
\noindent
{\it Acknowledgment.} The authors would like to thank Stein Bethuelsen and Markus Heydenreich 
for stimulating discussions, and the anonymous referee for useful suggestions.

\vspace{0.5cm}\noindent
{\it MSC} 2000. Primary 60F15, 60K35, 60K37; Secondary 82B41, 82C22, 82C44.\\
{\it Key words and phrases.} Random walk, dynamic random environment, contact process, 
strong law of large numbers, functional central limit theorem, large deviation principle, 
space-time cones, clusters of infections, coupling, regeneration times.
\end{abstract}


\section{Introduction}
\label{sec:intro}


\subsection{Background, motivation and outline}
\label{sec:back}

\paragraph{Background.}
\emph{A random walk in a dynamic random environment} on $\Z^d$, $d \geq 1$, is a 
random process where a ``particle'' makes random jumps with transition rates that 
depend on its location and themselves evolve with time. A typical example is when 
the dynamic random environment is given by an interacting particle system
\begin{equation}
\xi = (\xi_t)_{t \geq 0} \text{ with } \xi_t = \{\xi_t(x)\colon\,x\in\Z^d\}
\in \Omega,
\end{equation}   
where $\Omega$ is the configuration space, and $\xi_0$ is typically drawn from 
equilibrium. In the case where $\Omega=\{0,1\}^{\Z^d}$, the configurations can be 
thought of as consisting of ``particles'' and ``holes''. Given $\xi$, run a random 
walk $W=(W_t)_{t\geq 0}$ on $\Z^d$ that jumps at a fixed rate, but uses different 
transition kernels on a particle and on a hole. The key question is: What are 
the scaling properties of $W$ and how do these properties depend on the law 
of $\xi$?

The literature on random walks in dynamic random environments is still modest
(for a recent overview, see Avena~\cite{Avthesis}, Chapter 1). In Avena, den 
Hollander and Redig~\cite{AvdHoRe11} a strong law of large numbers (SLLN) was 
proved for a class of interacting particle systems satisfying a mild space-time 
mixing condition, called \emph{cone-mixing}. Roughly speaking, this is the 
requirement that for every $m>0$ all states inside the space-time cone (see
Fig.~\ref{fig-cone})
\begin{equation}
\label{spacetimecone}
\mathrm{CONE}_t := \big\{(x,s) \in \Z^d \times 
[t,\infty) \colon\,\|x\| \leq m(s-t)\big\},
\end{equation} 
are conditionally independent of the states at time zero in the limit as $t\to
\infty$. The proof of the SLLN uses a \emph{regeneration-time} argument. Under 
a cone-mixing condition involving multiple cones, a functional central limit 
theorem (FCLT) can be derived as well, and under monotonicity conditions also 
a large deviation principle (LDP). 

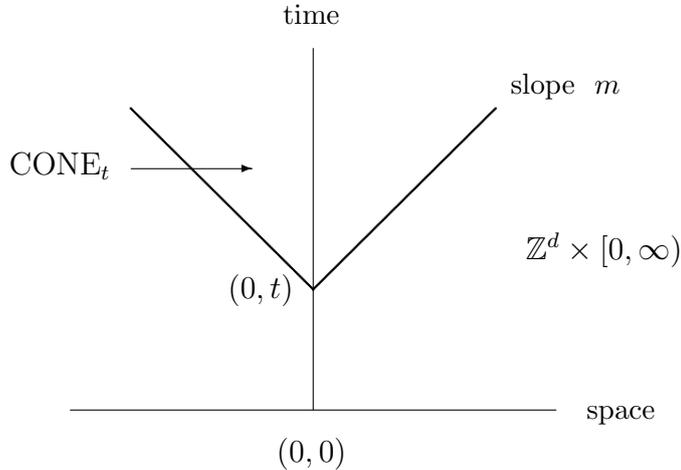
\begin{figure}[hbtp]
\vspace{1.8cm}
\begin{center}
\setlength{\unitlength}{0.4cm}
\begin{picture}(12,10)(-5,0)
\put(-8,0){\line(16,0){16}}
\put(0,0){\line(0,12){12}}
{\thicklines
\qbezier(0,4)(3,7)(6,10)
\qbezier(0,4)(-3,7)(-6,10)
}
\put(-1.2,-1.7){$(0,0)$}
\put(-2.8,3.7){$(0,t)$}
\put(-6,8){\vector(1,0){4}}
\put(7,5){$\Z^d \times [0,\infty)$}
\put(-10,7.8){$\mathrm{CONE}_t$}
\put(-1,12.8){\small\mbox{time}}
\put(9,-.3){\small\mbox{space}}
\put(6.5,10.5){\small\mbox{slope } $m$}
\end{picture}
\end{center}
\caption{\small The cone defined in \eqref{spacetimecone}.}
\label{fig-cone}
\vspace{0.3cm}
\end{figure}


Many interacting particle systems are cone-mixing, including spin-flip systems 
with spin-flip rates that are weakly dependent on the configuration, e.g.\ the 
stochastic Ising model above the critical temperature. However, also many 
interacting particle systems are not cone-mixing, including independent simple 
random walks, the exclusion process, the contact process and the voter model. 
Indeed, these systems have \emph{slowly decaying space-time correlations}. For 
instance, in the exclusion process particles are conserved and cannot sit on top 
of each other. Therefore, if at time zero there are particles everywhere in the box 
$[-t^2,t^2] \cap \Z^d$, then these particles form a ``large traffic jam around 
the origin''. This traffic jam will survive up to time $t$ with a probability 
tending to $1$ as $t\to\infty$, and will therefore affect the states near the tip 
of $\mathrm{CONE}_t$. Similarly, in the contact process, if at time zero there are 
no infections in the box $[-t^2,t^2] \cap \Z^d$, then no infections will be seen 
near the tip of $\mathrm{CONE}_t$ as well.

\paragraph{Motivation.}
Several attempts have been made to extend the SLLN to interacting particle systems 
that are not cone-mixing, with partial success. Examples include: independent 
simple random walks (den Hollander, Kesten and Sidoravicius~\cite{dHoKeSipr}) 
and the exclusion process (Avena, dos Santos and V\"ollering~\cite{AvdSaVo12}, 
Avena~\cite{Av12}). The present paper considers the \emph{supercritical contact 
process}. We exploit the graphical representation, which allows us to simultaneously 
couple all realizations of the contact process starting from different initial 
configurations. This coupling in turn allows us to first prove the SLLN when the 
initial configuration is ``all infected'' (with the help of a subadditivity 
argument), and then show that the same result holds when the initial configuration 
is drawn from equilibrium. The main idea is to use the coupling to show that 
configurations agree in large space-time cones containing the infection clusters 
generated by single infections and that the random walk eventually gets trapped 
inside the \emph{union} of these cones.

Under the assumption that the local drifts of the random walk are smaller than the 
speed at which infection clusters grow, the random walk eventually gets trapped 
inside a \emph{single} cone. We show that this implies the existence of 
\emph{regeneration times} at which the random walk ``forgets its past''. The 
latter in turn allow us to prove the FCLT and the LDP. 

It is typically difficult to obtain information about the speed in the SLLN, the 
volatility in the FCLT and the rate function in the LDP. In general, these are 
non-trivial functions of the parameters in the model, a situation that is well 
known from the literature on random walks in static random environments (for 
overviews, see Sznitman~\cite{Sz02} and Zeitouni~\cite{Ze04}). The reason is that 
these quantities depend on the \emph{environment process} (i.e., the process of 
environments as seen from the location of the walk), which is typically hard to 
analyze. For the supercritical contact process we are able to derive a few 
qualitative properties as a function of the infection parameter, but it remains 
a challenge to obtain a full quantitative description.        

A model of a random walk on the infinite cluster of supercritical oriented 
percolation (the discrete-time analogue of the contact process) is treated in 
Birkner, \v{C}ern\'y, Depperschmidt and Gantert \cite{BiCeDeGa12}, where a SLLN and 
a quenched and annealed CLT are obtained. This model can be viewed as a random 
walk in a dynamic random environment, but it has non-elliptic transition 
probabilities different from the ones we consider here, because the random walk
is confined to the infinite cluster.

\paragraph{Outline.}
In Section~\ref{sec:model} we define the model. In Section~\ref{sec:thms} 
we state our main results: two theorems claiming the SLLN, the FCLT and 
the LDP under appropriate conditions on the model parameters. In 
Section~\ref{sec:disc} we mention some open problems. The proofs of the 
theorems are given in Sections~\ref{sec:LLN}, \ref{sec:speed}, and \ref{sec:regeneration}. 
Sections~\ref{sec:construc} 
and \ref{sec:moreCP} contain preparatory work.


\subsection{Model}
\label{sec:model}

In this paper we consider the case where the dynamic random environment is the 
one-dimensional linear \emph{contact process} $\xi = (\xi_t)_{t \geq 0}$, i.e.,
the spin-flip system on $\Omega:=\{0,1\}^{\Z}$ with local transition rates given 
by
\begin{equation}
\label{ratescp}
\eta \to \eta^{x} \text{ with rate } 
\left\{\begin{array}{ll}
1 & \text{ if } \eta(x) = 1,\\
\lambda \left\{\eta(x-1)+\eta(x+1)\right\} & \text{ if } \eta(x) = 0,\\
\end{array}
\right.
\end{equation}
where $\lambda \in (0,\infty)$ and $\eta^x$ is defined by $\eta^x(y) := \eta(y)$ 
for $y \neq x$, $\eta^x(x):=1-\eta(x)$. We call a site \emph{infected} when its 
state is $1$, and \emph{healthy} when its state is 0. See Liggett~\cite{Li85}, 
Chapter VI, for proper definitions.

The empty configuration $\mathbf{0} \in \Omega$, given by $\mathbf{0}(x) = 0$ for 
all $x \in \Z$, is an absorbing state for $\xi$, while the full configuration 
$\mathbf{1} \in \Omega$, given by $\mathbf{1}(x) = 1$ for all $x \in \Z$, evolves 
towards an equilibrium measure $\nu_\lambda$, called the ``upper invariant measure'', 
that is stationary and ergodic under space-shifts. 
There is a critical threshold $\lambda_c \in (0,\infty)$ such that: 
(1) for $\lambda \in (0,\lambda_c]$, $\nu_\lambda = \delta_{\mathbf{0}}$; (2) for 
$\lambda \in (\lambda_c,\infty)$, $\rho_\lambda := \nu_\lambda(\eta(0)=1)>0$. In 
the latter case, $\delta_{\mathbf{0}}$ and $\nu_\lambda$ are the only equilibrium 
measures. It is known that $\nu_\lambda$ has exponentially decaying correlations, 
and that $\lambda \mapsto \rho_\lambda$ is continuous and non-decreasing with 
$\lim_{\lambda \to \infty} \rho_{\lambda} = 1$.

For a fixed realization of $\xi$, we define the random walk $W:= (W_t)_{t \geq 0}$ 
as the time-inhomogeneous Markov process on $\Z$ that, given $W_t=x$, jumps to
\begin{equation}
\begin{array}{lll}
x + 1 & \text{ at rate  } \;\; \alpha_1 \xi_t(x) + \alpha_0 \left[1-\xi_t(x)\right],\\
x - 1 & \text{ at rate  } \;\; \beta_1 \xi_t(x) + \beta_0 \left[1-\xi_t(x)\right],
\end{array}
\end{equation}
where $\alpha_i, \beta_i \in (0,\infty)$, $i=0,1$. We assume that
\begin{equation}
\label{condjumprates1}
\alpha_0+\beta_0 = \alpha_1+\beta_1 =: \gamma ,
\end{equation}
and that
\begin{equation}
\label{condjumprates2}
v_1 > v_0 \text{ with } v_1 := \alpha_1 - \beta_1 \text{ and } 
v_0 := \alpha_0 - \beta_0,
\end{equation}
i.e., the jump rate is constant and equal to $\gamma$ everywhere, while the drift 
to the right is larger on infected sites than on healthy sites. Observe that the
assumption in \eqref{condjumprates2} is made without loss of generality: since the 
contact process is invariant under reflection in the origin, $-W$ has the same law 
as $W$ with inverted jump rates.


\subsection{Theorems}
\label{sec:thms}

For a probability measure $\mu$ on $\Omega$, 
let $\P_{\mu}$ denote the joint law of $W$ and $\xi$ when the latter is 
started from $\mu$. Our SLLN reads as follows.
\begin{theorem}
\label{LLN} 
Suppose that {\rm (\ref{condjumprates1}--\ref{condjumprates2})} hold.\\ 
(a) For every $\lambda \in (\lambda_c,\infty)$ there exists a $v(\lambda) \in 
[v_0,v_1]$ such that, for any probability measure $\mu$ on $\Omega$
that is stochastically larger than a non-trivial shift-invariant and ergodic measure,
\begin{equation}
\label{eq:LLN}
\lim_{t \to \infty} t^{-1}W_t = v(\lambda) 
\quad \P_{\mu}\text{-a.s.\ and in } L^p,\,p \geq 1.
\end{equation}
In particular, \eqref{eq:LLN} holds for $\mu = \nu_\lambda$.\\
(b) The function $\lambda \mapsto v(\lambda)$ is non-decreasing and right-continuous 
on $(\lambda_c,\infty)$, with $v(\lambda) \in (v_0,v_1)$ for all $\lambda \in 
(\lambda_c,\infty)$ and $\lim_{\lambda\to\infty} v(\lambda) = v_1$.
\end{theorem}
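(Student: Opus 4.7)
My plan is to prove the SLLN first for the all-infected initial configuration $\mathbf{1}$, where a subadditivity argument produces the speed, and then extend to any initial measure $\mu$ stochastically dominating a non-trivial shift-invariant ergodic measure by a coupling that traces the infection cones seeded by the initial configuration. Part (b) will then follow from the same coupling technology together with monotonicity of the graphical construction in $\lambda$ and known properties of the upper invariant measure. Concretely, I set up Harris's graphical representation of the contact process on $\Z \times \R$ and couple walks in different environments through shared Poisson clocks of rates $\alpha_0$, $\beta_1$, and $\alpha_1-\alpha_0=\beta_0-\beta_1$ attached to each site (the third clock triggering a rightward jump on infected sites and a leftward jump on healthy sites). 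Condition \eqref{condjumprates2} ensures the third rate is positive and makes the coupling monotone in the environment. Writing $X_{s,t}$ for the walker's displacement between times $s\le t$ in the contact process restarted from $\mathbf{1}$ at time $s$, the monotonicity $\xi^{s,\mathbf{1}}_t\le\mathbf{1}=\xi^{t,\mathbf{1}}_t$ combined with the Markov property at time $t$ yields the subadditivity $\E[X_{s,u}]\le\E[X_{s,t}]+\E[X_{t,u}]$, so that $v(\lambda):=\lim_{t\to\infty}\E_{\mathbf{1}}[W_t]/t$ exists in $[v_0,v_1]$. I then upgrade to a.s.\ (and $L^p$) convergence by applying Kingman--Liggett's subadditive ergodic theorem to the stationary family $\{X_{s,t}\}$ after arranging the pathwise inequality in the global graphical coupling; uniform integrability is immediate from the Poisson bound $|W_t|\le N_t$ with $N$ of rate $\gamma$.

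\textbf{Step 2: Extension to general $\mu$.} For $\mu\ge\tilde\mu$ stochastically, with $\tilde\mu$ shift-invariant, ergodic and non-trivial, I couple the three processes $\xi^{\tilde\mu}\le\xi^\mu\le\xi^{\mathbf{1}}$ in the common graphical representation; the monotone walker coupling yields $W^{\tilde\mu}_t\le W^{\mu}_t\le W^{\mathbf{1}}_t$ pathwise. Step 1 handles $W^{\mathbf{1}}$, so it suffices to show $W^{\tilde\mu}_t/t\to v(\lambda)$ a.s. By ergodicity of $\tilde\mu$ the initially infected sites have positive density, and by supercriticality each survives with probability $\rho_\lambda>0$, producing infinitely many surviving seeds on each side of the origin. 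By the shape theorem for the supercritical contact process, the descendant set of each surviving seed asymptotically fills a space--time cone of slope $\pm r(\lambda)$, and the union of such cones covers $\Z\times[t_0,\infty)$ outside a set of vanishing relative density along any linear trajectory. Inside this union the coupled environments $\xi^{\tilde\mu}$ and $\xi^{\mathbf{1}}$ agree, being jointly equal to the graphical descendants of those seeds, so the coupled walks experience the same drift on a set of full asymptotic density. A direct estimate of the walker's exposure to the defect set then transfers the a.s.\ limit $v(\lambda)$ from $W^{\mathbf{1}}$ to $W^{\tilde\mu}$.

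\textbf{Step 3 (part (b)) and main obstacle.} Monotonicity in $\lambda$ comes from the standard nested coupling of contact processes with shared death marks and nested arrow marks: this gives $\xi^{\lambda_1}\le\xi^{\lambda_2}$ and hence $W^{\lambda_1}\le W^{\lambda_2}$, whence $v(\lambda_1)\le v(\lambda_2)$. Right-continuity is obtained by pushing $\lambda_n\downarrow\lambda_0$ through the coupling and using monotonicity together with dominated convergence for the walker's drift. The strict inclusion $v(\lambda)\in(v_0,v_1)$ follows from $0<\rho_\lambda<1$ combined with an application of Step 2 to the indicator processes $\xi_t(W_t)$ and $1-\xi_t(W_t)$: both contribute a positive asymptotic fraction of time, so the averaged drift lies strictly between $v_0$ and $v_1$. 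Finally, $\lim_{\lambda\to\infty}v(\lambda)=v_1$ follows from $\rho_\lambda\to 1$ and the bound $v_1-v(\lambda)\le(v_1-v_0)$ times the asymptotic fraction of healthy time, which vanishes by a stochastic comparison with $\nu_\lambda$. \emph{I expect the principal obstacle to be Step 2}: making the cone-covering argument fully rigorous for an arbitrary ergodic seed distribution, in particular controlling the defect set uniformly along the walker's trajectory despite the non-trivial interaction between cones from different seeds. This is where the supercritical shape theorem and quantitative cluster-growth estimates for the contact process carry the main weight.
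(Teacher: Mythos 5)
Your Step 1 and the skeleton of Step 2 match the paper's strategy (subadditivity for $\delta_{\mathbf 1}$; then coupling with a general initial law via the infection cones generated by surviving seeds), and your three-clock construction of the monotone walk coupling is a valid variant of the paper's $(N,U)$-construction. However, there are two genuine gaps.

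First, in Step 2 your claim that the walk ``experiences the same drift on a set of full asymptotic density'' does not by itself transfer the a.s.\ limit from $W^{\mathbf 1}$ to $W^{\tilde\mu}$. Even though $\xi^{\tilde\mu}$ and $\xi^{\mathbf 1}$ agree inside the safe region, the two coupled walks may be sitting at \emph{different} sites, so ``seeing the same drift'' in the environment is not the same as taking the same increments. The paper's route around this is a specific coupling: after a random time $T$ (large enough that both the environments agree inside the cone $V_m$ and the walk $W^{(\mu)}$ is trapped in $V_m$), one builds a second walk $\widehat W$ distributed like the walk in $\xi^{\mathbf 1}$ that \emph{shares the jump randomness} with $W^{(\mu)}$ on $[T,\infty)$ but uses independent randomness on $[0,T]$, and then conditions on the positive-probability, independent event $B_x=\{\widehat W_T=x\}$ matching $\{W^{(\mu)}_T=x\}$. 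On that event the two walks coincide forever, and Proposition 5.2 gives the limit. You identified Step 2 as the main obstacle, but the fix is not a quantitative defect-set estimate; it is this equality-at-a-finite-time conditioning.

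Second, in part (b) you treat $v(\lambda)>v_0$ and $\lim_{\lambda\to\infty}v(\lambda)=v_1$ as corollaries of ``both phases contribute positive asymptotic fractions of time,'' but showing that $W$ spends a positive fraction of time on infected sites is the hardest piece of the theorem and does not follow from $\rho_\lambda>0$ alone (the environment seen from the walker is not $\nu_\lambda$). The paper devotes Section 6 (stochastic domination to the left of a random infection path, exponential moments of the first nice infected site) and Section 7.2 (a regeneration-type decomposition via ``infection waves'' from $-\infty$, producing a minorizing walk $\hat W\le W$ with a tractable explicit speed $\hat v(\lambda)>v_0$, and $\hat v(\lambda)\to v_1$) to exactly this point. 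Your sketch of $v(\lambda)<v_1$ is also not quite right: the paper proves it by a purely local argument (with positive probability the walker sits on a site that just healed and received no arrows before the next jump), not by invoking $\rho_\lambda<1$. The monotonicity and right-continuity parts of your Step 3 are fine and follow the paper's argument (each $\lambda\mapsto\mathbb E_{\delta_{\mathbf 1}}[W_n/n]$ is continuous and non-decreasing, and $v$ is their infimum).
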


\noindent
We note in passing that if $\lambda \in (0,\lambda_c)$, then $\xi_t$ agrees with 
$\mathbf{0}$ on an interval that grows exponentially fast in $t$ (Liggett~\cite{Li85}, 
Chapter VI), and so it is trivial to deduce that $W$ satisfies the SLLN with $v(\lambda)
=v_0$. 

A FCLT and an LDP hold under an additional restriction, namely, $\lambda \in 
(\lambda_W,\infty)$
with
\begin{equation}
\label{extcond}
\lambda_W := \inf\big\{ \lambda \in (\lambda_c,\infty) \colon\, 
|v_0| \vee |v_1| < \iota(\lambda)\big\}.
\end{equation}
Here, $\lambda\mapsto\iota(\lambda)$ is the infection propagation speed (see \eqref{convR0}
in Section~\ref{subsec:cp}), which is known to be continuous, strictly positive and 
strictly increasing on $(\lambda_c,\infty)$, with $\lim_{\lambda\downarrow\lambda_c} 
\iota(\lambda)=0$ and $\lim_{\lambda\to\infty} \iota(\lambda)=\infty$.

\begin{theorem}
\label{LLNregen}
Suppose that {\rm (\ref{condjumprates1}--\ref{condjumprates2})} hold.\\
(a) For every $\lambda \in (\lambda_W,\infty)$ there exists a $\sigma(\lambda) 
\in (0,\infty)$ such that, under $\P_{\nu_\lambda}$,
\begin{equation}
\left(\frac{W_{nt}-v(\lambda)nt}{\sigma(\lambda)\sqrt{n}}\right)_{t \geq 0}
\Longrightarrow (B_t)_{t \geq 0} \qquad \mbox{ as } n\to\infty,
\end{equation}
where $B$ is standard Brownian motion and $\Longrightarrow$ denotes weak 
convergence in path space.\\
(b) The functions $\lambda\mapsto v(\lambda)$ and $\lambda\mapsto\sigma(\lambda)$ 
are continuous on $(\lambda_W,\infty)$.\\
(c) For every $\lambda\in(\lambda_W,\infty)$, $(t^{-1}W_t)_{t > 0}$ under
$\mathbb{P}_{\nu_\lambda}$ satisfies the large deviation principle on $\R$
with a finite and convex rate function that has a unique zero at $v(\lambda)$.
\end{theorem}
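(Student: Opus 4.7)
The plan is to follow the classical regeneration-time strategy of Sznitman--Zerner for random walks in random environment, adapted to this dynamic setting. The restriction $\lambda > \lambda_W$ is tailor-made to ensure that the walker cannot outrun the infection: since $|v_0| \vee |v_1| < \iota(\lambda)$, the forward space-time cone of any surviving infection opens faster than $W$ can drift, so there is a positive probability that $W$ remains trapped inside it for all subsequent time. This trapping event provides the necessary independence between past and future.

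First I would construct regeneration times from the graphical representation. At a trial time $T_k$, I would attempt to use a fresh infection event (from an independent part of the Harris system) at the walker's current position and check two favorable events: the infection survives forever, and the walker stays inside the forward cone spanned by its descendants for all later times. Both events are measurable with respect to the Harris system inside this forward cone together with the walker's jump clocks, and hence are independent of the graphical representation strictly outside the cone and, after suitable conditioning, of all events prior to $T_k$. If a trial fails, wait a deterministic amount of time and try again. Each trial succeeds with a probability bounded below by $\theta(\lambda) > 0$ (the survival probability) times the probability that $W$ stays within a cone of slope $\iota(\lambda) - \varepsilon$, which is positive because $\iota(\lambda) > |v_0| \vee |v_1|$. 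Iterating gives $0 < \tau_1 < \tau_2 < \cdots$ such that $(\tau_{k+1} - \tau_k,\, W_{\tau_{k+1}} - W_{\tau_k})_{k \geq 1}$ is i.i.d. The key technical input is that the regeneration gaps have exponential moments: this uses exponential decay of the contact process survival failure conditional on surviving and standard exponential large deviation estimates for the walker's displacement, combined with the strict gap $\iota(\lambda) - |v_0| \vee |v_1| > 0$.

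Given this i.i.d.\ structure with exponential moments, part (a) follows from Donsker's invariance principle applied to partial sums of the regeneration increments, together with the classical time-inversion argument that turns a random index into the physical time $t$. The volatility is
\begin{equation}
\sigma(\lambda)^2 = \frac{\mathrm{Var}\bigl(W_{\tau_2} - W_{\tau_1} - v(\lambda)(\tau_2 - \tau_1)\bigr)}{\E[\tau_2 - \tau_1]},
\end{equation}
and $\sigma(\lambda) > 0$ because degeneracy would force the regeneration increments to be deterministic, contradicting the ellipticity of the jump rates. For part (c), the exponential moments of the regeneration increments let me apply Cramér's theorem to the i.i.d.\ pairs $(\tau_{k+1}-\tau_k, W_{\tau_{k+1}}-W_{\tau_k})$ and then a standard contraction/quotient argument to obtain an LDP for $W_t/t$ with a convex, finite rate function; the unique zero at $v(\lambda)$ is the law-of-large-numbers value from Theorem \ref{LLN}.

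Part (b) should follow from continuity of the regeneration law in $\lambda$. I would couple contact processes with nearby parameters on a joint Harris graphical representation, use monotonicity in $\lambda$ plus continuity of $\rho_\lambda$ and $\iota(\lambda)$, and invoke uniform exponential tails (valid on compact subsets of $(\lambda_W,\infty)$) to pass to the limit in the ratio expressions for $v(\lambda)$ and $\sigma(\lambda)$. The principal obstacle I expect is the first step: verifying that the sequence $(\tau_k)$ genuinely produces i.i.d.\ increments requires careful bookkeeping of the independence structure between the cone interior, the cone exterior, the walker's clocks, and the restart mechanism — the dynamic nature of the environment actually helps here (no need for a priori ballisticity as in static RWRE) but makes the measurability structure around the cone boundary delicate.
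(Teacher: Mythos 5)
Your overall architecture coincides with the paper's: construct regeneration times using the trapping cone available when $|v_0|\vee|v_1|<\iota(\lambda)$, show the regeneration increments are i.i.d.\ with uniform exponential moments, and derive the FCLT via Donsker plus time-inversion and the continuity via coupling systems with nearby parameters. Parts (a) and (b) are therefore essentially the paper's argument. Two points, however, deserve scrutiny.

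First, your retry mechanism is too naive. You write that if a trial fails you ``wait a deterministic amount of time and try again,'' but a trial only makes sense when the walker is sitting on an infected site, and after a deterministic wait that need not be the case. The paper's construction is more delicate precisely here: after a failure time $F_t$ it first tracks a left-moving Poisson path $Y_{t,\cdot}(W_t)$, defines the next trial time as the first time the walker meets the right-most infection emanating from the half-line to the left of $Y_{t,F_t}(W_t)$, and then uses the stochastic-domination result of Lemma~\ref{lemma:stochdomleftpi} to guarantee that the configuration to the left of the walker at the trial time dominates $\bar\nu_\lambda$. This ``wave'' construction is what makes the geometric-number-of-trials bound and the uniform exponential moments (Lemma~\ref{lemma:momT0}) go through. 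Without something of this sort, the independence and moment bookkeeping you correctly flag as ``delicate'' will break.

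Second, and more substantively, your route to (c) differs from the paper's and as stated has a gap. You propose Cram\'er's theorem for the i.i.d.\ regeneration pairs followed by a quotient/contraction argument. But the regeneration increments have exponential moments only for tilting parameters in a bounded neighborhood of $0$, so the joint log-moment-generating function is finite only on a bounded set; Cram\'er then yields a rate function that is merely linear in the extremal regions, and the quotient argument does not by itself identify the true rate function of $t^{-1}W_t$ for speeds far from $v(\lambda)$, nor does it directly establish finiteness on all of $\R$. The paper sidesteps this by importing the partial LDP for attractive spin-flip systems from Avena--den Hollander--Redig (which supplies the LDP with a rate function that is convex and finite but could in principle vanish on an interval), and then using the regeneration-time exponential-concentration estimate \eqref{expdecay} to rule out any flat stretch of the rate function, so that the zero set reduces to $\{v(\lambda)\}$. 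Your Cram\'er-based argument would at most give you \eqref{expdecay}-type bounds near $v(\lambda)$; combining that with an LDP established by other means is exactly what the paper does, and you should not expect to avoid invoking some external LDP input.

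A minor remark: your variance formula $\sigma^2=\mathrm{Var}\bigl(W_{\tau_2}-W_{\tau_1}-v(\tau_2-\tau_1)\bigr)/\E[\tau_2-\tau_1]$ is the standard one for regeneration-based FCLTs and is correct; the paper's display \eqref{formulasreg} writes $\mathrm{Var}(W_\tau)$ in the numerator, which appears to be a slip.
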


The intuitive reason why the rate function has a unique zero is that deviations 
of the empirical speed in the: (i) upward direction require a density of infected 
sites larger than $\rho_\lambda$, which is costly because infections become 
healthy independently of the states at the other sites; (ii) downward direction 
require a density of infected sites smaller than $\rho_\lambda$, which is costly 
because infection clusters grow at a linear speed and rapidly fill up healthy 
intervals everywhere.


\subsection{Discussion}
\label{sec:disc}

{\bf 1.} 
It is natural to expect that $\lambda \mapsto v(\lambda)$ is continuous and 
strictly increasing on $(\lambda_c,\infty)$ with $\lim_{\lambda\da\lambda_c} 
v(\lambda)=v_0$. Fig.~\ref{fig-speed} shows a qualitative plot of the speed
in that setting. If $0 \in (v_0,v_1)$, then there is a critical threshold 
$\lambda^\ast \in (\lambda_c,\infty)$ at which the speed changes sign. It is 
natural to ask whether $\lambda \mapsto v(\lambda)$ is concave on $(\lambda_c,
\infty)$ and Lipshitz at $\lambda_c$.

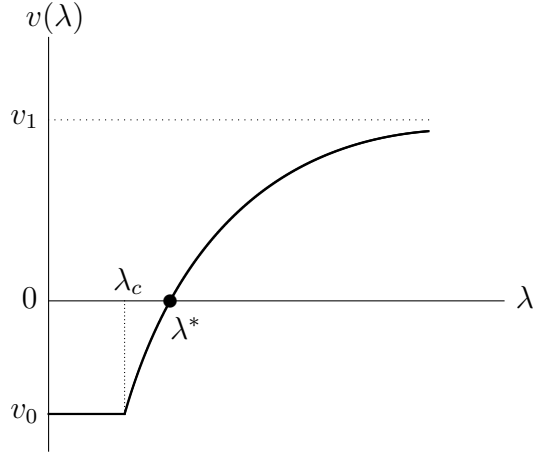
\begin{figure}[hbtp]
\vspace{0.5cm}
\begin{center}
\setlength{\unitlength}{0.5cm}
\begin{picture}(12,11)(0,-3.7)
\put(0,0){\line(12,0){12}} 
\put(0,-4){\line(0,11){11}}
{\thicklines
\qbezier(2,-3)(4,4)(10,4.5)
\qbezier(0,-3)(1,-3)(2,-3)
}
\qbezier[60](0,4.8)(5,4.8)(10,4.8)
\qbezier[30](2,0)(2,-1.5)(2,-3)
\put(-.7,-.2){$0$}
\put(-.6,7.3){$v(\lambda)$}
\put(12.3,-.2){$\lambda$}
\put(1.7,.3){$\lambda_c$}
\put(3.2,-1){$\lambda^\ast$}
\put(-1,4.7){$v_1$}
\put(-1,-3.1){$v_0$}
\put(3.2,0){\circle*{.35}}
\end{picture}
\end{center}
\caption{\small Qualitative plot of $\lambda\mapsto v(\lambda)$ when $0 \in (v_0,v_1)$.}
\label{fig-speed}
\end{figure}

\medskip\noindent
{\bf 2.}
We know that $W$ is transient when $v(\lambda) \neq 0$. Is $W$ recurrent when 
$v(\lambda)=0$?

\medskip\noindent
{\bf 3.}
We expect \eqref{extcond} to be redundant. Moreover, we expect that for every
$\lambda \in (\lambda_c,\infty)$ the \emph{environment process} (i.e., the
process of environments as seen from the location of the random walk) has a 
unique and non-trivial equilibrium measure that is absolutely continuous with 
respect to $\nu_\lambda$.

\medskip\noindent
{\bf 4.}
Theorem~\ref{LLN} can be extended to arbitrary initial configurations that have a 
``strictly positive lower density''  (see Remark~\ref{rem:initconfigposdensity}
in Section~\ref{subsec:LLNeq} below). Also, Theorem~\ref{LLN} remains valid
for $\mu$ stochastically larger than $\nu_\lambda$ even when some of the jump rates 
$\alpha_i,\beta_i$, $i \in \{0,1\}$, are equal to zero (see Remark~\ref{rem:nonellipticrates} 
in Section~\ref{subsec:LLNeq} below).

\medskip\noindent
{\bf 5.}
Theorem~\ref{LLNregen}(a) can be extended (with the same mean and variance) 
to arbitrary initial configurations containing infinitely many infections, while 
Theorem~\ref{LLNregen}(c) can be extended (with a different rate function) to any 
initial measure that has positive correlations and is stochastically larger than a 
non-trivial Bernoulli product measure (see Remark~\ref{rem:extensionsCLTLDP} 
in Section~\ref{subsec:regenconst} below).

\medskip\noindent
{\bf 6.}
Theorems \ref{LLN}--\ref{LLNregen} can presumably be extended to $\Z^d$ with
$d \geq 2$. Also in higher dimensions single infections create infection 
clusters that grow at a linear speed (i.e., asymptotically form a ball with 
a linearly growing radius). The construction of the regeneration times when
$\lambda \in (\lambda_W,\infty)$, with $\lambda_W$ the analogue of \eqref{extcond}, 
appears to be possible. 

\medskip\noindent
{\bf 7.}
It would be interesting to extend Theorems~\ref{LLN}--\ref{LLNregen} to 
\emph{multi-type contact processes}. On each type $i$ the random walk
has transition rates $\alpha_i,\beta_i$ such that $\alpha_i+\beta_i=\gamma$ 
for all $i$. As long as the dynamics is monotone and $i \mapsto v_i$ is
non-decreasing, many of the arguments in the present paper carry over.


\section{Construction}
\label{sec:construc}

In Section~\ref{subsec:cp} we construct the contact process, in 
Section~\ref{subsec:RW} the random walk on top of the contact process.


\subsection{Contact process}
\label{subsec:cp}

A c\`adl\`ag version of the contact process can be constructed from a graphical 
representation in the following standard fashion. Let $:=(H(x))_{x \in \Z}$ and 
$I := (I(x))_{x \in \Z}$ be two independent collections of i.i.d.\ Poisson processes 
with rates $1$ and $\lambda$, respectively. On $\Z \times [0,\infty)$, draw the 
events of $H(x)$ as crosses over $x$ and the events of $I(x)$ as two-sided arrows 
between $x$ and $x+1$ (see Fig.~\ref{fig-graphrep}).

\begin{figure}[hbtp]
\vspace{1cm}
\begin{center}
\setlength{\unitlength}{0.4cm}
\begin{picture}(20,10)(0,0)
\put(0,0){\line(22,0){22}} 
\put(0,11){\line(22,0){22}}
\put(2,0){\line(0,1){11}}
\put(5,0){\line(0,1){11}}
\put(8,0){\line(0,1){11}}
\put(11,0){\line(0,1){11}}
\put(14,0){\line(0,1){11}}
\put(17,0){\line(0,1){11}}
\put(20,0){\line(0,1){11}}
\put(2,3){\vector(1,0){3}}
\put(5,3){\vector(-1,0){3}}
\put(2,8.5){\vector(1,0){3}}
\put(5,8.5){\vector(-1,0){3}}
\put(5,7.5){\vector(1,0){3}}
\put(8,7.5){\vector(-1,0){3}}
\put(8,2){\vector(1,0){3}}
\put(11,2){\vector(-1,0){3}}
\put(11,5){\vector(1,0){3}}
\put(14,5){\vector(-1,0){3}}
\put(14,6){\vector(1,0){3}}
\put(17,6){\vector(-1,0){3}}
\put(14,9.7){\vector(1,0){3}}
\put(17,9.7){\vector(-1,0){3}}
\put(17,7){\vector(1,0){3}}
\put(20,7){\vector(-1,0){3}}
\put(1.6,1){$\times$}
\put(1.6,9){$\times$}
\put(4.6,4){$\times$}
\put(10.6,6){$\times$}
\put(13.6,8){$\times$}
\put(16.6,3){$\times$}
\put(19.6,10.2){$\times$}
{\linethickness{0.04cm}
\put(11,0){\line(0,1){6.16}}
\put(11,2){\line(-1,0){3}}
\put(8,2){\line(0,1){9}}
\put(8,7.5){\line(-1,0){3}}
\put(5,7.5){\line(0,1){3.5}}
\put(5,8.5){\line(-1,0){3}}
\put(2,8.5){\line(0,1){0.76}}
\put(11,5){\line(1,0){3}}
\put(14,5){\line(0,1){3.26}}
\put(14,6){\line(1,0){3}}
\put(17,6){\line(0,1){5}}
\put(17,9.7){\line(-1,0){3}}
\put(14,9.7){\line(0,1){1.3}}
\put(17,7){\line(1,0){3}}
\put(20,7){\line(0,1){3.46}}
}
\put(1,-1.2){$-3$} 
\put(4,-1.2){$-2$} 
\put(7,-1.2){$-1$}
\put(10.75,-1.2){$0$}
\put(13.75,-1.2){$1$}
\put(16.8,-1.2){$2$}
\put(19.8,-1.2){$3$}
\put(-1.2,-.3){$0$}
\put(-1.2,10.7){$t$}
\put(23,0){$\mathbb{Z}$}
\put(11,0){\circle*{.35}}
\put(5,11){\circle*{.35}}
\put(8,11){\circle*{.35}}
\put(14,11){\circle*{.35}}
\put(17,11){\circle*{.35}}
\end{picture}
\end{center}
\caption{\small Graphical representation. The crosses are events of $H$ and the 
arrows are events of $I$. The thick lines cover the region that is infected when 
the initial configuration has a single infection at the origin. } 
\label{fig-graphrep}
\end{figure}

\noindent
(The standard graphical representation uses Poisson processes of one-sided arrows 
to the right and to the left on every time line, each with rate $\lambda$. This 
gives the same dynamics.)

For $x,y \in \Z$ and $0 \le s \le t$, we say that $(x,s)$ and $(y,t)$ are 
\emph{connected}, written $(x,s) \leftrightarrow (y,t)$, if and only if there 
exists a nearest-neighbor path in $\Z \times [0,\infty)$ starting at $(x,s)$ 
and ending at $(y,t)$, going either upwards in time or sideways in space across 
arrows without hitting crosses. For $x \in \Z$, we define the cluster of $x$ 
at time $t$ by
\begin{equation}
\label{defclusters}
C_t(x) := \big\{ y \in \Z\colon\,(x,0) \leftrightarrow (y,t)\big\}.
\end{equation}
For example, in Fig.~\ref{fig-graphrep}, $C_t(0) = \{-2, -1, 1, 2\}$ and 
$C_t(2) = \emptyset$. Note that $C_t(x)$ is a function of $H$ and $I$.

For a fixed initial configuration $\eta$, we declare $\xi_t(y) = 1$ if there 
exists an $x$ such that $y \in C_t(x)$ and $\eta(x) = 1$, and we declare 
$\xi_t(y) = 0$ otherwise. Then $\xi$ is adapted to the filtration
\begin{equation}
\label{defcurlyF}
\mathcal{F}_t : = \sigma\big(\xi_0, (H_s,I_s)_{s \in [0,t]}\big).
\end{equation}
This construction allows us to \emph{simultaneously couple} copies of the contact 
process starting from \emph{all} configurations $\eta \in \Omega$. In the following 
we will write $\xi(\eta)$ and $\xi_t(\eta)(x)$ when we want to exhibit that the 
initial configuration is $\eta$. 

We note two consequences of the graphical construction, stated in 
Lemmas~\ref{monotCP}--\ref{defRLmost} below. The first is the monotonicity of 
$\eta\mapsto\xi(\eta)$, the second concerns the state of the sites surrounded 
by the cluster of an infected site. The notation $\eta \leq \eta^{\prime}$ stands 
for $\eta(x) \leq \eta^{\prime}(x)$ for all $x\in\Z$.

\begin{lemma}
\label{monotCP}
If $\eta \le \eta^{\prime}$, then $\xi_t(\eta) \le \xi_t(\eta^{\prime})$ for all
$t \geq 0$.
\end{lemma}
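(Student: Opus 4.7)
The plan is to read the conclusion straight off the graphical construction used to define $\xi$, exploiting the crucial feature that the cluster sets $C_t(x)$ depend only on the Poisson data $(H,I)$ and not on the initial configuration. So the coupling is automatic, and monotonicity reduces to a pointwise check.

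First, I would fix $t \ge 0$ and $y \in \Z$, and suppose $\xi_t(\eta)(y) = 1$. By the defining recipe just given in the text, this means there exists $x \in \Z$ with $\eta(x) = 1$ and $y \in C_t(x)$. The key observation is that $C_t(x)$ is a deterministic functional of $H$ and $I$ alone; in particular, it is the \emph{same} set for the two initial configurations $\eta$ and $\eta'$ on the coupled probability space.

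Second, from $\eta \le \eta'$ and $\eta(x) = 1$, I immediately obtain $\eta'(x) = 1$. Combining this with $y \in C_t(x)$ and applying the same recipe to $\eta'$ yields $\xi_t(\eta')(y) = 1$. Since $y$ was arbitrary, this shows $\xi_t(\eta) \le \xi_t(\eta')$ pointwise on every realization of $(H,I)$, hence almost surely for every $t \ge 0$.

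There is essentially no obstacle: the whole purpose of the graphical construction \eqref{defclusters} was precisely to realize all initial conditions on one probability space so that monotonicity in $\eta$ becomes a tautology. If one preferred to emphasize the joint monotonicity in $t$ as well, one could alternatively note that $C_s(x) \subseteq$ (ancestors at time $s$ of) $C_t(x)$ along upward paths, but this is not needed for the statement at hand. The only point worth flagging in the write-up is that the coupled version of $\xi$ built from $(H,I)$ coincides in distribution with the process defined via the rates \eqref{ratescp}, which is standard (Liggett~\cite{Li85}, Chapter~VI) and may simply be cited.
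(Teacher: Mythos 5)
Your proof is correct and follows the same approach as the paper, which simply states the result is immediate from the definition of $\xi_t$ in terms of $\eta$ and $(C_t(x))_{x\in\Z}$; you have merely spelled out the one-line argument in detail.
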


\bpr 
Immediate from the definition of $\xi_t$ in terms of $\eta$ and $(C_t(x))_{x \in \Z}$.
\epr

For $x \in \Z$, define the left-most and the right-most site influenced by site $x$ 
at time $t$ as
\begin{equation}
\label{defRLmost}
\begin{array}{ll}
L_t(x) := \inf C_t(x),\\
R_t(x) := \sup C_t(x),\\
\end{array}
\end{equation}
where $\inf \emptyset = \infty$ and $\sup \emptyset = - \infty$. By symmetry, for any 
$t \ge 0$, $R_t(x)-x$ and $x-L_t(x)$ have the same distribution, independently of $x$.

\begin{lemma}
\label{depinclusters}
Fix $x \in \Z$ and $t \ge 0$. If $C_t(x) \neq \emptyset$ and $y \in [L_t(x), R_t(x)] 
\cap \Z$, then $\eta\mapsto\xi_t(\eta)(y)$ is constant on $\{\eta\in\Omega\colon\,
\eta(x)=1\}$.
\end{lemma}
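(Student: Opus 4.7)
The plan is to prove the sharper identity that, for every $\eta\in\Omega$ with $\eta(x)=1$ and every $y\in[L_t(x),R_t(x)]\cap\Z$,
\begin{equation*}
\xi_t(\eta)(y) \;=\; \mathbbm{1}_{\{y\in C_t(x)\}}.
\end{equation*}
Since the right-hand side depends only on $x$, $t$ and the Poisson data $H,I$, constancy of $\eta\mapsto\xi_t(\eta)(y)$ on $\{\eta:\eta(x)=1\}$ is then automatic. The inequality ``$\geq$'' is immediate: writing $\tilde\eta$ for the configuration consisting of a single infection at $x$, one has $\eta\geq\tilde\eta$, so by Lemma~\ref{monotCP} and the definition of $\xi$ via clusters
\begin{equation*}
\xi_t(\eta)(y)\;\geq\;\xi_t(\tilde\eta)(y)\;=\;\mathbbm{1}_{\{y\in C_t(x)\}}.
\end{equation*}

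The reverse inequality is the crux. Assume $y\in[L_t(x),R_t(x)]\cap\Z$ with $y\notin C_t(x)$, and, for contradiction, that $\xi_t(\eta)(y)=1$ for some $\eta$ with $\eta(x)=1$. Then there exist $z\in\Z$ with $\eta(z)=1$ and an open infection path $\pi$ in the graphical representation from $(z,0)$ to $(y,t)$; the hypothesis $y\notin C_t(x)$ forces $z\neq x$, and without loss of generality $z<x$. Because $C_t(x)\neq\emptyset$, the leftmost open descendant of $(x,0)$ yields a well-defined open path $\pi_L$ from $(x,0)$ to $(L_t(x),t)$. At time $0$ the path $\pi$ is strictly to the left of $\pi_L$, whereas at time $t$ it sits at $y\geq L_t(x)$ and is therefore weakly to the right of $\pi_L$. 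The key subclaim is that two such open paths must share a common space-time point $(w,s)$; granting this, concatenating $\pi_L|_{[0,s]}$ with $\pi|_{[s,t]}$ produces an open path from $(x,0)$ to $(y,t)$, contradicting $y\notin C_t(x)$.

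The only real obstacle is the planar crossing subclaim just used, which is where the one-dimensionality of the geometry enters. I would prove it by viewing each open path as a c\`adl\`ag $\Z$-valued function $s\mapsto\pi(s)$ on $[0,t]$ and tracking the sign of $\pi_L(s)-\pi(s)$, which is strictly positive at $s=0$ and nonpositive at $s=t$ and hence must change sign. In the graphical representation the only mechanisms capable of flipping this sign are (i) the two paths occupying the same site over a nontrivial time interval, or (ii) both paths traversing the same two-sided arrow at the same instant, in which case they share both endpoints of that arrow. Either mechanism delivers the required common space-time point $(w,s)$, and once this combinatorial crossing lemma is in hand the contradiction above is a one-line concatenation.
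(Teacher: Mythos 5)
Your proof is correct and takes essentially the same route as the paper: both reduce to showing $\xi_t(\eta)(y)=\mathbbm{1}_{\{y\in C_t(x)\}}$, and both establish the nontrivial direction by observing that any infection path from $(z,0)$ (with $z\neq x$) to $(y,t)$ must cross the path from $(x,0)$ to $L_t(x)$ or $R_t(x)$, whence $(x,0)\leftrightarrow(y,t)$. The only difference is that the paper states the planar-crossing fact in one clause, while you unfold it into a careful discrete intermediate-value argument on the c\`adl\`ag difference $\pi_L(s)-\pi(s)$, handling the case where both paths traverse the same two-sided arrow at the same instant — a welcome elaboration, but not a different approach.
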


\bpr
It suffices to show that, under the conditions stated, $\xi_t(\eta)(y) = 1$ if and 
only if $y \in C_t(x)$. The `if' part is obvious. For the `only if' part, note that 
if there is a $z \neq x$ such that $(z,0) \leftrightarrow (y,t)$, then any path 
realizing the connection must cross a path connecting $(x,0)$ to either $(R_t(x),t)$ 
or $(L_t(x),t)$, so that $(x,0) \leftrightarrow (y,t)$ as well.
\epr

If $\xi_0 = \mathbbm{1}_x$, then $R_t(x)$ and $L_t(x)$ are, respectively, the right-most 
and the left-most infections present at time $t$. In particular, in this case the 
infection survives for all times if and only if $R_t(x) - L_t(x) \geq 0$ for all 
$t \geq 0$. For $\lambda \in (\lambda_c,\infty)$ it is well known that, given 
$\xi_0=\mathbbm{1}_{0}$, the infection survives with positive probability and there 
exists a constant $\iota = \iota(\lambda) > 0$ such that, conditionally on survival, 
\begin{equation}
\label{convR0}
\lim_{t \to \infty} t^{-1}R_t(0) = \iota \quad \xi\text{-a.s.}
\end{equation}


\subsection{Random walk on top of contact process}
\label{subsec:RW}

Under assumptions (\ref{condjumprates1}--\ref{condjumprates2}), the random walk $W$ 
can be constructed as follows. Let $N:=(N_t)_{t \geq 0}$ be a Poisson process with rate 
$\gamma$. Denote by $J:=(J_k)_{k \in \N_0}$ its generalized inverse, i.e., $J_0 = 0$ and 
$(J_{k+1}-J_k)_{k \in \N_0}$ are i.i.d.\ $\mathrm{EXP}(\gamma)$ random variables. Let 
$U:=(U_k)_{k\in\N}$ be an i.i.d.\ sequence of $\mathrm{UNIF}([0,1])$ random variables, 
independent of $N$. Set $S_0:= 0$ and, recursively for $k \in \N_0$,
\begin{equation}
\label{defwalkskeleton}
S_{k+1} := S_k + 2 \left( \mathbbm{1}_{\{0 \leq U_{k+1} \leq \alpha_0/\gamma \}} 
+ \xi_{J_{k+1}}(S_k) \mathbbm{1}_{\{\alpha_0/\gamma < U_{k+1} \leq \alpha_1/\gamma\}} 
\right) - 1,
\end{equation}
i.e., $S_{k+1}=S_k+1$ with probability $\alpha_i/\gamma$ and $S_{k+1}=S_k-1$ with 
probability $\beta_i/\gamma=1-\alpha_i/\gamma$ when $\xi_{J_{k+1}}(S_k)=i$, for $i=0,1$ 
(recall that $\alpha_0 < \alpha_1$ by (\ref{condjumprates1}--\ref{condjumprates2})). 
Setting
\begin{equation}
\label{defwalk}
W_t := S_{N_t},
\end{equation}
we can use the right-continuity of $\xi$ to verify that $W$ indeed is a Markov process 
with the correct jump rates.

A useful property of the above construction is that it is monotone in the environment, 
in the following sense. For two dynamic random environments $\xi$ and $\xi^{\prime}$, 
we say that $\xi \leq \xi^{\prime}$ when $\xi_t \leq \xi^{\prime}_t$ for all $t \geq 0$. 
Writing $W = W(\xi)$ in the previous construction (i.e., exhibiting $W$ as a function 
of $\xi$), it is clear from \eqref{defwalkskeleton} that 
\begin{equation}
\label{monotwalk}
\xi \le \xi^{\prime} \quad \Longrightarrow \quad 
W_t(\xi) \leq W_t(\xi^{\prime}) \quad \forall\,t \geq 0.
\end{equation}

We denote by
\begin{equation}
\label{defcurlyG}
\mathcal{G}_t : = \mathcal{F}_t \vee 
\sigma\big((N_s)_{s \in [0,t]}, (U_k)_{1 \le k \le N_t}\big)
\end{equation}
the filtration generated by all the random variables that are used to define the contact 
process $\xi$ and the random walk $W$.


\section{The strong law of large numbers}
\label{sec:LLN}

Theorem~\ref{LLN}(a) is proved in two steps. In Section~\ref{subsec:LLNfull} we use
subadditivity to prove the SLLN when $\xi$ starts from $\delta_{\mathbf{1}}$. In 
Section~\ref{subsec:LLNeq} we show that, under the hypotheses stated about $\mu$ in Theorem~\ref{LLN}(a), 
we can couple two copies of $\xi$ starting from $\mu$ and $\delta_{\mathbf{1}}$ 
so as to transfer the SLLN, with the same speed.

In the following, for a random process $X = (X_t)_{t \in \mathcal{I}}$ with $\mathcal{I} 
= \R$ or $\mathcal{I}=\Z$, we write
\begin{equation}
\label{notationproc}
X_{[0,t]} := (X_s)_{s \in [0,t]\cap \mathcal{I}}.
\end{equation}


\subsection{Starting from the full configuration: subadditivity}
\label{subsec:LLNfull}

Since $\eta \leq \mathbf{1}$ for all $\eta \in \Omega$, it follows from 
\eqref{monotwalk} and Lemma~\ref{monotCP} that $W_t(\xi(\eta)) \leq 
W_t(\xi(\mathbf{1}))$ for all $t \geq 0$. Therefore, if in the graphical 
construction we replace $\xi_s$ by $\mathbf{1}$ at any given time $s$, 
then the new increments after time $s$ lie to the right of the old increments 
after time $s$, and are independent of the increments before time $s$. This 
leads us to a subadditivity argument, which we now formalize.

For $n \in \N_0$, let
\begin{equation}
\label{defHn}
\begin{array}{lllll}
H^{(n)} & = & \big(H^{(n)}_t(x)\big)_{t \geq 0, x \in \Z} 
& := & \big( H_{t+n}(x+W_n) - H_n(x+W_n) \big)_{t \geq 0, x \in \Z},\\
I^{(n)} & = & \big(I^{(n)}_t(x) \big)_{t \geq 0, x \in \Z} 
& := & \big( I_{t+n}(x+W_n) - I_n(x+W_n) \big)_{t \geq 0, x \in \Z},\\
N^{(n)} & = & \big(N^{(n)}_t \big)_{t \geq 0} 
& := & \big( N_{t+n} - N_n \big)_{t \geq 0},\\
U^{(n)} & = & \big( U^{(n)}_k \big)_{k \in \N} 
& := & \big( U_{k+N_n} \big)_{k \in \N}.\\
\end{array}
\end{equation}
Then, for any $n\in\N_0$, $(H^{(n)}, I^{(n)}, N^{(n)}, U^{(n)})$ has the same distribution 
as $(H,I,N,U)$ and is independent of 
\begin{equation}
H^{(j)}_{[0,n-j]}, I^{(j)}_{[0,n-j]}, N^{(j)}_{[0,n-j]}, U^{(j)}_{[1,N^{(j)}_{n-j}]}, 
\qquad 0 \leq j \leq n-1.
\end{equation}

Abbreviate $\xi = \xi(\eta, H,I)$ and $W = W(\xi,N,U)$. For $n\in\N_0$, let
\begin{equation}
\label{defWn}
\begin{array}{lll}
\xi^{(n)} & := & \xi(\mathbf{1}, H^{(n)}, I^{(n)}),\\
W^{(n)} & := & W(\xi^{(n)}, N^{(n)}, U^{(n)}),\\
\end{array}
\end{equation}
and define the double-indexed sequence
\begin{equation}
\label{defsubadseq}
X_{m,n} := W^{(m)}_{n-m}, \qquad n,m \in \N_0, \, n \geq m.
\end{equation}

\begin{lemma}
\label{subad}
The following properties hold:\\
(i) For all $n,m \in \N_0$, $n \geq m$: $X_{0,n} \leq X_{0,m} + X_{m,n}$.\\
(ii) For all $n \in \N_0$: $(X_{n,n+k})_{k \in \N_0}$ has the same distribution 
as $(X_{0,k})_{k \in \N_0}$.\\
(iii) For all $k \in \N$: $(X_{nk,(n+1)k})_{n \in \N_0}$ is i.i.d.\\
(iv) $\sup_{n \in \N} \mathbb{E}_{\delta_{\mathbf{1}}}\left[n^{-1}|X_{0,n}| 
\right] < \infty$.
\end{lemma}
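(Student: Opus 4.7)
The plan is to verify the four properties in order, with (i) the key subadditivity step and (ii)--(iv) following by bookkeeping around the graphical construction of Section~\ref{sec:construc}.

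For (i), I would use the monotonicity observation that opens Section~\ref{subsec:LLNfull}. After translating the original walk on $[m,n]$ to start from $(0,0)$ (time shift $-m$, space shift $-W_m$), the displacement $W_n - W_m$ becomes the walk driven by $(H^{(m)}, I^{(m)}, N^{(m)}, U^{(m)})$ restricted to $[0, n-m]$, on the contact process started from the spatial shift $\tilde{\xi}_m$ of $\xi_m$. Since $\tilde{\xi}_m \leq \mathbf{1}$ pointwise, Lemma~\ref{monotCP} gives $\xi(\tilde{\xi}_m, H^{(m)}, I^{(m)}) \leq \xi^{(m)}$, and then \eqref{monotwalk} yields $W_n - W_m \leq W^{(m)}_{n-m} = X_{m,n}$, which is (i).

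For (ii), the shifted sequence $(H^{(n)}, I^{(n)}, N^{(n)}, U^{(n)})$ has the same joint law as $(H, I, N, U)$ by spatial shift-invariance of $H, I$ and the time-stationarity of the Poisson increments; since in both $(X_{n, n+k})_{k \ge 0}$ and $(X_{0, k})_{k \ge 0}$ the environment is started deterministically from $\mathbf{1}$, the distributional identity follows from applying identical deterministic rules to equidistributed inputs. For (iii), I would induct on $n$: $X_{nk, (n+1)k} = W^{(nk)}_k$ is a function of $(H^{(nk)}, I^{(nk)}, N^{(nk)}, U^{(nk)})$ on $[0, k]$, which is independent of $\mathcal{G}_{nk}$ (by independent Poisson increments and spatial homogeneity, after conditioning on $W_{nk}$), while $\sigma(X_{0, k}, \ldots, X_{(n-1)k, nk}) \subseteq \mathcal{G}_{nk}$. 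Together with (ii), this yields mutual independence and identical distribution. For (iv), the walk makes jumps of size $\pm 1$ at total rate $\gamma$, so $|X_{0, n}| \leq N_n$ pointwise, giving $\mathbb{E}_{\delta_{\mathbf{1}}}[n^{-1}|X_{0, n}|] \leq n^{-1} \mathbb{E}[N_n] = \gamma$.

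The main obstacle is (i): one must correctly track that both the environment and the driving noise are spatially translated when the walk's starting point is shifted to the origin, and then invoke monotonicity in both arguments (contact process in its initial configuration, walk in its driving environment). Parts (ii)--(iv) are routine once the decomposition in \eqref{defHn}--\eqref{defsubadseq} is in place.
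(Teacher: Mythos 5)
Your proposal is correct and follows essentially the same approach as the paper: part (i) is the same shift-and-monotonicity argument with $\tilde{\xi}_m$ playing the role of the paper's $\hat\eta$, part (iii) uses the same observation that $X_{nk,(n+1)k}$ is a fixed function of the shifted noise on $[0,k]$, which is independent of the past (the paper records this independence just before the lemma statement rather than invoking it inline), and parts (ii) and (iv) are handled identically. The only difference is cosmetic: you spell out the "immediate from construction" in (ii) and package (iii) as an induction over the filtration $\mathcal{G}_{nk}$, which is a fine way to make the i.i.d.\ claim precise.
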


\bpr 
(i) Fix $n,m \in \N_0$, $n \geq m$ and define $\hat\xi := \xi(\hat\eta, H^{(m)},
I^{(m)})$, where $\hat \eta(x) = \xi_m(x+W_m)$. This is the contact process after 
time $m$ as seen from $W_m$. Note that $X_{0,n}-X_{0,m} = W_n - W_m = W_{n-m}
(\hat\xi,N^{(m)},U^{(m)})$. Since $\hat \eta \le \mathbf{1}$, it follows from 
\eqref{monotwalk} and Lemma~\ref{monotCP} that the latter is $\le W_{n-m}
(\xi^{(m)},N^{(m)},U^{(m)}) = W^{(m)}_{n-m}$.

\noindent
(ii) Immediate from the construction.

\noindent 
(iii) By definition, $X_{nk,(n+1)k} = W_k(\xi^{(nk)},N^{(nk)},U^{(nk)})$. By construction, 
for each $t \geq 0$, $W_t(\xi,N,U)$ is a function of $N_{[0,t]}$, $U_{[1, N_t]}$ and 
$\xi_{[0,t]}$, which in turn is a function of $H_{[0,t]},I_{[0,t]}$ and $\eta$. Therefore 
$X_{nk,(n+1)k}$ is equal to a (fixed) function of 
\begin{equation}
H^{(nk)}_{[0,k]},I^{(nk)}_{[0,k]},N^{(nk)}_{[0,k]},
U^{(nk)}_{[1, N^{(nk)}_{(n+1)k}]},
\end{equation} 
which are jointly i.i.d.\ in $n$ (when $k$ is fixed).

\noindent(iv) 
This follows from the fact that $|W_t| \leq N_t$.
\epr

Lemma~\ref{subad} allows us to prove the SLLN when $\xi$ starts from $\delta_{\mathbf{1}}$.

\begin{proposition}\label{LLNfull}
Let
\begin{equation}
\label{defv}
v(\lambda) := \inf_{n \in \N} \mathbb{E}_{\delta_{\mathbf{1}}}\left[n^{-1}W_n \right], \;\; \lambda \in [0,\infty).
\end{equation}
Then
\begin{equation}
\label{eqLLNfull}
\lim_{t \to \infty} t^{-1} W_t = v(\lambda) \quad \mathbb{P}_{\delta_{\mathbf{1}}}
\text{-a.s.\ and in } L^p,\,\,p \geq 1.
\end{equation}
\end{proposition}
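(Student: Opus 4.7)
My plan is to prove this by applying the Liggett--Kingman subadditive ergodic theorem to the array $(X_{m,n})_{0\le m\le n}$ from Lemma~\ref{subad}, then interpolating from integer to continuous times, and finally upgrading the almost-sure convergence to $L^p$ by a uniform-integrability argument.

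First, I would invoke the subadditive ergodic theorem in the version suitable for sequences that are only i.i.d.\ along arithmetic progressions (see Liggett~\cite{Li85}, Chapter VI). Its hypotheses --- subadditivity, shift-invariance of the joint law, ergodicity of the increments $(X_{nk,(n+1)k})_n$ for each $k \in \N$, and uniform $L^1$ control of $n^{-1} |X_{0,n}|$ --- are exactly what Lemma~\ref{subad} parts~(i)--(iv) provide. It follows that there is a deterministic $\gamma^{\ast} \in \R$ with
\begin{equation*}
\lim_{n \to \infty} n^{-1} X_{0,n} = \gamma^{\ast}
\qquad \mathbb{P}_{\delta_{\mathbf 1}}\text{-a.s.\ and in } L^1.
\end{equation*}
Taking expectations in (i) and using (ii), the sequence $a_n := \mathbb{E}_{\delta_{\mathbf 1}}[X_{0,n}]$ is subadditive, so Fekete's lemma gives $\gamma^{\ast} = \lim_n a_n/n = \inf_n a_n/n = v(\lambda)$ in the notation of \eqref{defv}. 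Since $X_{0,n} = W_n$ under $\mathbb{P}_{\delta_{\mathbf 1}}$ by construction, this establishes the SLLN along integer times.

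Next, I would pass from $n \in \N$ to $t \in [0,\infty)$ using the Poisson clock. For $t \in [n, n+1]$, the walk $W$ makes at most $Y_n := N_{n+1} - N_n$ jumps, each of size $\pm 1$, so $|W_t - W_n| \le Y_n$. The $Y_n$ are i.i.d.\ Poisson$(\gamma)$ and have exponential tails, so Borel--Cantelli gives $n^{-1} Y_n \to 0$ almost surely. Writing $t^{-1} W_t = (n/t)\,n^{-1} W_n + t^{-1}(W_t - W_n)$ and using $n/t \to 1$ then yields $t^{-1} W_t \to v(\lambda)$ almost surely.

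Finally, to upgrade to $L^p$ convergence, I would note that $|W_t| \le N_t$ pointwise. Since $N_t$ is Poisson$(\gamma t)$, all moments of $N_t/t$ are bounded uniformly in $t \ge 1$; in particular, $\sup_{t \ge 1} \mathbb{E}[(N_t/t)^{p+1}] < \infty$, so $\{|W_t/t|^p : t \ge 1\}$ is uniformly integrable, which combined with the almost-sure limit yields convergence in $L^p$ for every $p \ge 1$. I do not foresee a serious obstacle here: the only point requiring a modicum of care is matching Lemma~\ref{subad}(iii) to the variant of the subadditive ergodic theorem being used, but the i.i.d.\ property along every arithmetic progression supplies more than enough ergodicity for the stationary shift.
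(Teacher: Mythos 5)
Your proof proposal matches the paper's argument essentially step for step: apply Liggett's subadditive ergodic theorem via Lemma~\ref{subad}(i)--(iv) to get almost-sure convergence of $n^{-1}W_n$ to the deterministic constant $v(\lambda)=\inf_n n^{-1}\mathbb{E}_{\delta_{\mathbf 1}}[W_n]$ (the i.i.d.\ structure in part (iii) supplying the ergodicity needed to make the limit a constant), then interpolate to continuous time using $\sup_{s\in[0,1]}|W_{n+s}-W_n|\le N_{n+1}-N_n$, and finally upgrade to $L^p$ through uniform integrability furnished by $|W_t|\le N_t$. The explicit appeal to Fekete's lemma to identify the limit is harmless but redundant, as that identification is already part of the conclusion of the subadditive ergodic theorem.
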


\bpr 
Conditions (i)--(iv) in Lemma~\ref{subad} allow us to apply the subbaditive ergodic 
theorem of Liggett~\cite{Li85b} (see also Liggett~\cite{Li85}, Theorem VI.2.6) to 
the sequence $(X_{0,n})_{n \in \N_0} = (W_n)_{n \in \N_0}$, which gives $\lim_{n 
\to \infty}n^{-1} W_n = v$ $\mathbb{P}_{\delta_{\mathbf{1}}}$-a.s. Via a standard argument 
this can subsequently be extended to $(t^{-1}W_t)_{t \geq 0}$ by using that, for 
any $n \in \N_0$,
\begin{equation}
\label{Wdombym}
\sup_{s \in [0,1]}|W_{n+s} -W_{n}| \leq N_{n+1}-N_n,
\end{equation}
which implies that $\lim_{t \to \infty} t^{-1}|W_t-W_{\lfloor t \rfloor}| = 0$ a.s.\ w.r.t.\
$\mathbb{P}_{\delta_{\mathbf{1}}}$. Since $|W_t| \leq N_t$, we see that $(t^{-p}|W_t|^p)_{t \geq 1}$ 
is uniformly integrable for any $p \geq 1$, so the convergence also holds in $L^p$.
\epr


\subsection{Other initial measures: coupling}
\label{subsec:LLNeq}

In this section, we show that whenever two copies of the contact process starting from $\mu$ and 
$\delta_{\mathbf{1}}$ can be coupled so as to agree with large probability at large times inside a 
space-time cone, the LLN holds also under $\mathbb{P}_{\mu}$ with the same velocity $v(\lambda)$.
We subsequently show that such a coupling is possible when $\mu$ is stochastically larger than a 
non-trivial shift-invariant and ergodic measure. Some remarks regarding extensions are made after the 
corresponding results.

For $m>0$, let
\begin{equation}
V_{m} := \big\{(x,s) \in \Z \times [0,\infty) 
\colon\, |x| \le m s \big\}
\end{equation}
be a cone of inclination $m$ opening upwards in space-time.

\begin{proposition}\label{prop:LLNundercoupling}
Fix $\lambda \in (0,\infty)$, and let $\xi^{(\mu)}$ and $\xi^{(\mathbf{1})}$ denote
the contact process started from $\mu$ and $\mathbf{1}$, respectively.
Suppose that there exists a coupling measure $\mathbb{P}$
of $\xi^{(\mu)}$ and $\xi^{(\mathbf{1})}$ such that,
for some $m > |v_0| \vee |v_1|$,
\begin{equation}
\label{eq:sCPcoupled}
\lim_{T \to \infty}\mathbb{P}\left(\exists\,(x,t) \in V_{m} \cap \Z \times [T,\infty)
\colon\,\xi^{(\mu)}_t(x) \neq \xi^{(\mathbf{1})}_t(x)\right) = 0.
\end{equation}
Then
\begin{equation}
\label{eq:LLNundercoupling}
\lim_{t \to \infty} t^{-1}W_t = v(\lambda) \quad \mathbb{P}_{\mu} \text{-a.s.\ and in } L^p, \; p\ge1,
\end{equation}
where $v(\lambda)$ is as in \eqref{defv}.
\end{proposition}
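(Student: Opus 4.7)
The plan is to enhance the hypothesis coupling $\mathbb{P}$ by adjoining an independent Poisson process $N$ and sequence of uniforms $U$, and to run both walks $W^{(\mu)}$ and $W^{(\mathbf{1})}$ on top of their respective environments via \eqref{defwalkskeleton}--\eqref{defwalk} using this common $(N,U)$. Proposition~\ref{LLNfull} then yields $t^{-1}W^{(\mathbf{1})}_t \to v(\lambda)$ almost surely under the extended $\mathbb{P}$, so it suffices to show that the difference process $D_t := W^{(\mathbf{1})}_t - W^{(\mu)}_t$ satisfies $D_t = o(t)$ almost surely; the $L^p$ statement will then follow, as in the proof of Proposition~\ref{LLNfull}, from the crude bound $|W^{(\mu)}_t| \leq N_t$ and uniform integrability.

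First I would confine both walks to the cone $V_m$. Decomposing each walk as $W_t = \int_0^t v_{\xi_s(W_s)}\,ds + M_t$, with integrand in $\{v_0,v_1\}$ and $M$ a mean-zero c\`adl\`ag martingale of compensator bounded by $\gamma t$, the martingale law of large numbers gives $\limsup_t |W_t|/t \leq |v_0|\vee|v_1|$ almost surely for both walks. Since $m > |v_0| \vee |v_1|$, both walks lie inside $V_m$ for all times past some almost surely finite random threshold, and combining this with \eqref{eq:sCPcoupled} produces an almost surely finite $T^*$ such that, for every $t \geq T^*$, both $W^{(\mu)}_t$ and $W^{(\mathbf{1})}_t$ belong to $V_m$ and $\xi^{(\mu)}_t \equiv \xi^{(\mathbf{1})}_t$ on $V_m$. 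Past $T^*$ both walks are therefore driven by the common environment $\xi^{(\mathbf{1})}$, though generally at distinct positions.

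The heart of the argument is the analysis of $D$ past $T^*$. Using $\alpha_i = (\gamma+v_i)/2$, a direct computation from \eqref{defwalkskeleton} gives, at each Poisson time $J_k > T^*$ with $a := W^{(\mathbf{1})}_{J_k^-}$ and $b := W^{(\mu)}_{J_k^-}$, the conditional drift
\[
\mathbb{E}\bigl[\Delta D_k \,\big|\, \mathcal{G}_{J_k^-}\bigr]
= \tfrac{v_1-v_0}{\gamma}\Bigl(\mathbbm{1}\{\xi_{J_k}(a) > \xi_{J_k}(b)\} - \mathbbm{1}\{\xi_{J_k}(a) < \xi_{J_k}(b)\}\Bigr),
\]
with $\xi = \xi^{(\mathbf{1})}$, while the increments themselves lie in $\{-2,0,2\}$. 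The shift-invariance of the law of $\xi^{(\mathbf{1})}_t$ (inherited from $\mathbf{1}$) produces the elementary identity $\mathbb{P}(\xi^{(\mathbf{1})}_t(x)=0,\xi^{(\mathbf{1})}_t(y)=1) = \mathbb{P}(\xi^{(\mathbf{1})}_t(x)=1,\xi^{(\mathbf{1})}_t(y)=0)$ for every $x \neq y$, so that the unconditional mean of the displayed drift vanishes. Writing the Doob decomposition $D_t = D_{T^*} + A_t + N_t$, the martingale part $N_t$ is $o(t)$ almost surely by the standard martingale LLN, and the principal obstacle — which I expect to be the most delicate step — is to establish that the compensator $A_t$ is likewise $o(t)$ almost surely. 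This I would attack by a pathwise ergodic/mixing argument exploiting the shift-invariance and exponential decay of spatial correlations of $\xi^{(\mathbf{1})}$, together with diffusive control on the gap between the two walks, so as to average out the two indicator terms along the coupled trajectories and conclude $D_t = o(t)$, whence $t^{-1}W^{(\mu)}_t \to v(\lambda)$.
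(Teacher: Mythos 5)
Your strategy of running both walks on the same $(N,U)$, confining them to the cone $V_m$, and then analyzing the difference process $D_t := W^{(\mathbf{1})}_t - W^{(\mu)}_t$ has a genuine gap at precisely the step you flag as ``the most delicate'': the compensator estimate $A_t = o(t)$ is not established, and the symmetry argument you offer as evidence that it should work is flawed. You invoke the identity $\mathbb{P}(\xi^{(\mathbf{1})}_t(x)=0,\,\xi^{(\mathbf{1})}_t(y)=1) = \mathbb{P}(\xi^{(\mathbf{1})}_t(x)=1,\,\xi^{(\mathbf{1})}_t(y)=0)$ for \emph{deterministic} $x\neq y$ to argue the unconditional drift of $D$ vanishes. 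But the displayed drift is evaluated at $a := W^{(\mathbf{1})}_{J_k^-}$ and $b := W^{(\mu)}_{J_k^-}$, which are \emph{random positions of the walks themselves} and are heavily correlated with the environment (indeed $a \geq b$ always, by \eqref{monotwalk}, and $a$ got ahead of $b$ exactly because it encountered more infections). There is no exchangeability between the states at the two walk positions, so the unconditional mean of the increment of $D$ need not be zero, and there is no obvious obstruction to $D_t$ growing linearly. Controlling the bias at the walk's location is essentially the ``environment seen from the walker'' problem, which is the hard part of the whole subject and would amount to a much stronger result than the one you are trying to prove.

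The paper sidesteps this entirely by a much simpler device. Rather than tracking the gap $D_t$, it observes that once both walks are in the cone $V_m$, are driven by the agreeing environment, and use the same $(N,U)$ from that point on, they have the Markovian property: if they ever \emph{coincide} they coincide forever. It then arranges coincidence at a fixed time $T$ by conditioning: because all the jump rates are strictly positive, for each $x$ there is an event $B_x$ of positive probability, measurable with respect to $(\widehat{N}_{[0,T]},\widehat{U}_{[1,\widehat{N}_T]})$ and independent of $W^{(\mu)}$, on which $\widehat{W}_T = x$. Conditioning simultaneously on $\{W^{(\mu)}_T = x\}$ and $B_x$ forces $W^{(\mu)}_s = \widehat{W}_s$ for all $s\geq T$ on $\Gamma_T$, and since these conditionings do not disturb the law of $\widehat{W}$, Proposition~\ref{LLNfull} transfers directly. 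This avoids any martingale or mixing analysis of the gap and is the step your proposal is missing. If you want to keep your coupling with shared $(N,U)$, the cleanest fix is to replace the $D_t = o(t)$ estimate by an argument that $D_t$ hits $0$ in finite time (after which it stays $0$), but that too requires control of the environment seen from the walker; the conditioning trick in the paper is strictly easier.
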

\begin{proof}
Let $N^{(\mu)}$, $U^{(\mu)}$ and $N^{(\mathbf{1})}$, $U^{(\mathbf{1})}$ be independent copies of $N,U$ and
rename $\mathbb{P}$ to denote the joint law of $\xi^{(a)}, N^{(a)}, U^{(a)}$, $a \in \{\mu, \mathbf{1}\}$.
Then $W^{(\mu)} := W(\xi^{(\mu)},N^{(\mu)}, U^{(\mu)})$ under $\mathbb{P}$ has the same law as $W$ 
under $\mathbb{P}_{\mu}$. 

Denote by $\overline{\mathbf{0}}$, $\overline{\mathbf{1}}$ the elements of $\Omega^{[0,\infty)}$ that are constant 
and equal to $0$, respectively, $1$, i.e., $\overline{\mathbf{0}}_t(x) = 0$ for all $(x,t) \in \Z \times [0,\infty)$
and analogously for $\overline{\mathbf{1}}$, and set
\begin{equation}
\label{defoverlineW}
\overline{W}^{(\mu)} := |W(\overline{\mathbf{1}}, N^{(\mu)}, U^{(\mu)})| \vee |W(\overline{\mathbf{0}}, N^{(\mu)}, U^{(\mu)})|.
\end{equation}
For $T > 0$, let
\begin{equation}\label{defD_T}
D_T := \left\{\overline{W}^{(\mu)}_s \le m s \; \;\forall \; s \ge T \right\}
\end{equation}
and
\begin{equation}\label{defGamma_T}
\Gamma_T := D_T \cap \left\{\xi^{(\mu)}_s(x) = \xi^{(\mathbf{1})}_s(x) \;\; \forall \;(x,s) \in V_m \cap \Z \times [T,\infty) \right\}.
\end{equation}
For $i \in \{0,1\}$, $W(\overline{\mathbf{i}},N^{(\mu)},U^{(\mu)})$ is a homogeneous random walk with total jump rate 
$\gamma$ and drift $v_i$. Hence, by \eqref{eq:sCPcoupled},
\begin{equation}
\label{probGammahigh}
\lim_{T \to \infty}\mathbb{P}\left( \Gamma_T \right) = 1.
\end{equation}
Therefore it suffices to prove that
\begin{equation}
\label{eq:sufficientforLLNcoupling}
\mathbb{P} \left( \lim_{t \to \infty} t^{-1}W^{(\mu)}_t = v ~\Big| ~\Gamma_T \right) = 1 \;\; \forall \;\; T >0.
\end{equation}

In order to prove \eqref{eq:sufficientforLLNcoupling}, we couple $W^{(\mu)}$ with a random walk $\widehat{W}$ 
distributed as $W$ under $\mathbb{P}_{\delta_{\mathbf{1}}}$, as follows. Fix $T>0$ and let $\widehat{N} = (\widehat{N}_s)_{s \ge 0}$, 
$\widehat{U} = (\hat{U}_{n})_{n \in \N}$ be defined by
\begin{equation}
\label{defhatN}
\widehat{N}_t := \left\{ \begin{array}{ll}
N^{(\mathbf{1})}_t & \text{ if } t \le T, \\
N^{(\mathbf{1})}_T + N^{(\mu)}_t - N^{(\mu)}_T & \text{ otherwise,}
\end{array}\right.
\end{equation}
and
\begin{equation}
\label{defhatU}
\widehat{U}_n := \left\{ \begin{array}{ll}
U^{(\mathbf{1})}_n & \text{ if } n \le N^{(\mathbf{1})}_T, \\
U^{(\mu)}_n & \text{ otherwise.}
\end{array}\right.
\end{equation}
Then it is clear that $\widehat{W} := W(\xi^{(\mathbf{1})}, \widehat{N}, \widehat{U})$ has the correct law,
and that $\widehat{N}_{[0,T]}, \widehat{U}_{[1,\widehat{N}_T]}$ are independent of $W^{(\mu)}$.
Moreover, we claim that 
\begin{equation}
\label{claimequalityonGammaT}
\text{ on }\Gamma_T\colon\,\quad  W^{(\mu)}_T = \widehat{W}_T \quad \Longrightarrow \quad W^{(\mu)}_s = \widehat{W}_s \; \forall \; s \ge T.
\end{equation}
To see \eqref{claimequalityonGammaT}, note that, by monotonicity,
\begin{equation}
W(\overline{\mathbf{0}}, N^{(\mu)}, U^{(\mu)}) \le W^{(\mu)}_s \le W(\overline{\mathbf{1}}, N^{(\mu)}, U^{(\mu)}),
\end{equation}
so that, on $D_T$, $(W^{(\mu)}_s,s) \in V_m$ for all $s \ge T$.
Since $W^{(\mu)}$ and $\widehat{W}$ use the same jump decisions after time $T$,  if they are equal at time $T$ and $\Gamma_T$ 
occurs, then they will see forever the same random environment, and will thus remain equal for all subsequent times.

With this observation, we are now ready to prove \eqref{eq:sufficientforLLNcoupling} by showing that
\begin{equation}
\label{eq:sufficientforcoupling2}
\mathbb{P} \left( \lim_{t \to \infty} t^{-1}W^{(\mu)}_t = v ~\Big| ~\Gamma_T, W^{(\mu)}_T = x \right) = 1
\end{equation}
for each $x \in \Z \cap [-mT,mT]$. To that end, first note that, for each fixed $x$, there exists an event $B_x \in \sigma(\widehat{N}_{[0,T]}, 
\widehat{U}_{[1,\widehat{N}_T]})$ with positive probability such that $\widehat{W}_T = x$ on $B_x$. Indeed, since all the jump rates 
$\alpha_i,\beta_i$, $i \in \{0,1\}$ are strictly positive, we can fix the number and direction of jumps of $\widehat{W}$ on $[0,T]$ by imposing 
restrictions on $\widehat{N}_{[0,T]}, \widehat{U}_{[1,\widehat{N}_T]}$. To conclude \eqref{eq:sufficientforcoupling2}, we write
\begin{equation}
\begin{aligned}
\mathbb{P} \left( \lim_{t\to \infty} t^{-1} W^{(\mu)}_t = v ~\Big| ~\Gamma_T, W^{(\mu)}_T = x \right) 
& = \mathbb{P} \left( \lim_{t\to \infty} t^{-1} W^{(\mu)}_t = v ~\Big| ~\Gamma_T, W^{(\mu)}_T = x, B_x \right)  \\
& = \mathbb{P} \left( \lim_{t\to \infty} t^{-1} W^{(\mathbf{1})}_t = v ~\Big| ~\Gamma_T, W^{(\mu)}_T = x, B_x \right)  \\
& = 1,
\end{aligned}
\end{equation}
where we use Proposition~\ref{LLNfull}.
\end{proof}

\begin{remark}
\label{rem:nonellipticrates}
\emph{
In the case $\mu = \nu_\lambda$ (for which \eqref{eq:sCPcoupled} holds by Proposition~\ref{prop:sCPcoupled_ergodic} below), 
the conclusion of Proposition~\ref{prop:LLNundercoupling} is true even when some of the jump rates $\alpha_i,\beta_i$, $i \in \{0,1\}$ 
are equal to zero (note that the proof of Proposition~\ref{LLNfull} does not need all rates to be strictly positive). To adapt the proof, 
replace the conditioning on $W_T^{(\mu)}=x$ in \eqref{eq:sufficientforcoupling2} by the event $\{N^{(\mu)}_T = 0\}$, which implies 
$W_T^{(\mu)} = 0$. Then $(W^{(\mu)}_{t+T}-W^{(\mu)}_T)_{t \ge 0}$ under $\mathbb{P}(\cdot | N^{(\mu)}_T = 0)$ has the same
distribution as $W$ under $\mathbb{P}_{\nu_\lambda}$. Since $\widehat{N}_T = 0$ implies $\widehat{W}_T = 0$ and has positive
probability, the claim follows as before.}
\end{remark}

We next show that \eqref{eq:sCPcoupled} is satisfied whenever $\mu$ contains a non-trivial ergodic measure. 
This together with Proposition~\ref{prop:LLNundercoupling} finishes the proof of Theorem~\ref{LLN}(a).

\begin{proposition}
\label{prop:sCPcoupled_ergodic}
If $\mu$ is stochastically larger than a non-trivial probability measure $\bar{\mu}$ that is shift-invariant and ergodic,
then \eqref{eq:sCPcoupled} holds under the coupling given by the graphical representation in Section~{\rm \ref{subsec:cp}}.
\end{proposition}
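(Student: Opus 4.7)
The plan is to reduce to the case $\mu=\bar\mu$ by monotonicity, and then, via Lemma~\ref{depinclusters} combined with the shape theorem~\eqref{convR0} and ergodicity, to cover $V_m$ eventually by the ``envelope cones'' of the single surviving infections that are already present in $\bar\eta$. For the reduction, use Strassen's theorem to couple $\bar\eta\sim\bar\mu$ and $\eta\sim\mu$ with $\bar\eta\le\eta$ pointwise and use this $\eta$ as the initial condition in the graphical construction of $\xi^{(\mu)}$. Lemma~\ref{monotCP} then gives $\xi^{(\bar\eta)}\le\xi^{(\mu)}\le\xi^{(\mathbf{1})}$, so every disagreement between $\xi^{(\mu)}$ and $\xi^{(\mathbf{1})}$ at $(x,t)$ is also a disagreement between $\xi^{(\bar\eta)}$ and $\xi^{(\mathbf{1})}$ at the same point. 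Hence it is enough to prove \eqref{eq:sCPcoupled} with $\bar\mu$ in place of $\mu$.

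For any $y\in\Z$ with $\bar\eta(y)=1$, applying Lemma~\ref{depinclusters} at $x=y$ (using also $\mathbf{1}(y)=1$) yields $\xi^{(\bar\eta)}_s(z)=\xi^{(\mathbf{1})}_s(z)=\mathbf{1}_{\{z\in C_s(y)\}}$ for every $z\in[L_s(y),R_s(y)]$, so the envelope cone $E_y:=\{(z,s)\colon L_s(y)\le z\le R_s(y)\}$ lies entirely in the agreement zone. Letting $\mathcal{S}_y:=\{C_s(y)\ne\emptyset\text{ for all }s\ge0\}$ and $\mathcal{I}_1:=\{y\in\Z\colon\bar\eta(y)=1,\mathcal{S}_y\}$, the shape theorem \eqref{convR0} and its mirror (obtained by the reflection-invariance of the graphical construction, which gives $x-L_t(x)\stackrel{d}{=}R_t(x)-x$) provide, for every $y\in\mathcal{I}_1$ and every $\epsilon\in(0,\iota)$, a random $\tau_y(\epsilon)<\infty$ with $[L_s(y),R_s(y)]\supseteq[y-(\iota-\epsilon)s,y+(\iota-\epsilon)s]$ for all $s\ge\tau_y(\epsilon)$.

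Because $\bar\eta$ is independent of $(H,I)$ and the joint law is shift-invariant and ergodic, and because $\rho:=\bar\mu(\eta(0)=1)>0$ (non-triviality of $\bar\mu$) and $\theta:=\mathbb{P}(\mathcal{S}_0)>0$ (from $\lambda>\lambda_c$), the stationary set $\mathcal{I}_1^{t_0}:=\{y\in\mathcal{I}_1\colon\tau_y(\epsilon)\le t_0\}$ has density $\rho\,\mathbb{P}(\mathcal{S}_0\cap\{\tau_0(\epsilon)\le t_0\})$, which rises to $\rho\theta>0$ as $t_0\to\infty$; fix $t_0$ large enough that this density is strictly positive. The crucial claim is then that, almost surely, for all sufficiently large $s$ the maximum gap of $\mathcal{I}_1^{t_0}\cap[-(m+\iota)s,(m+\iota)s]$ is $o(s)$, i.e., every $z\in[-ms,ms]$ lies within distance $(\iota-\epsilon)s$ of some point of $\mathcal{I}_1^{t_0}$. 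Once this is granted, every $(z,s)\in V_m\cap(\Z\times[T,\infty))$ with $T$ large enough lies in some envelope cone $E_y$ (with $y\in\mathcal{I}_1^{t_0}$ and $s\ge\tau_y(\epsilon)$), forcing $\xi^{(\bar\eta)}_s(z)=\xi^{(\mathbf{1})}_s(z)$; consequently the last disagreement time in $V_m$ is almost surely finite, and \eqref{eq:sCPcoupled} follows.

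The $o(s)$ max-gap claim is where the real work lies: for a shift-invariant ergodic $\bar\mu$, positive density alone need not force $o(s)$ gaps, since the tails of the inter-point distances can decay arbitrarily slowly. I would bridge this via the Bezuidenhout-Grimmett block construction for the supercritical contact process, which produces a local event $G_y$ on $(H,I)$, supported in a box of fixed size around $y$, that implies both $\mathcal{S}_y$ and $\tau_y(\epsilon)\le t_0$ for a suitably chosen $t_0$. The thinned set $\{y\colon\bar\eta(y)=1,\,G_y\}\subseteq\mathcal{I}_1^{t_0}$ is then finite-range dependent with positive density, and a standard Borel-Cantelli / concentration argument yields $O(\log s)$ maximum gaps in $[-(m+\iota)s,(m+\iota)s]$, hence $o(s)$ almost surely. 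With this input the envelope-cone argument delivers \eqref{eq:sCPcoupled} for every $m>0$, in particular for every $m>|v_0|\vee|v_1|$, completing the proof.
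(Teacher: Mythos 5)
Your overall strategy is the same as the paper's: reduce to $\bar\mu$ by monotonicity, identify envelope cones in which $\xi^{(\bar\mu)}$ and $\xi^{(\mathbf{1})}$ agree (via Lemma~\ref{depinclusters}), establish a positive density of ``good'' starting infections whose clusters spread linearly, and conclude that $V_m$ is eventually absorbed into the union of their cones. The paper packages the good points via events $\Lambda_x$ (infection spreads at speed $\ge\iota/2$ for \emph{all} $t\ge 0$, rather than eventually, using the floor to handle small $t$), works directly with the renewal sequence $Z_n$ of good sites, and shows that the ``houses'' between consecutive $Z_n$ have temporal/spatial tip-coordinate ratio $\to 0$, which forces $V_m\cap\mathcal{S}^c$ to be bounded. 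This avoids ever stating a ``max gap is $o(s)$'' claim, though it is morally the same thing.

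Where your write-up goes off the rails is in the assertion that ``for a shift-invariant ergodic $\bar\mu$, positive density alone need not force $o(s)$ gaps.'' This is false. The joint law of $(\bar\eta,H,I)$ is shift-ergodic (product of an ergodic measure with i.i.d.\ Poisson fields), so the indicator sequence $\mathbbm{1}_{\mathcal{I}_1^{t_0}}$ is shift-ergodic with density $\varrho>0$; the ergodic theorem then gives $Z_n/n\to\varrho^{-1}$ a.s., hence $Z_{n+1}/Z_n\to 1$, hence $G_n:=Z_{n+1}-Z_n=o(Z_n)$, and from this $\max_{k\le n}G_k=o(Z_n)$ follows by splitting off the finitely many small indices. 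So the max gap in $[-L,L]$ is indeed $o(L)$ a.s., with no extra input needed. Consequently the Bezuidenhout--Grimmett detour is unnecessary, and as written it is also broken: the thinned set $\{y:\bar\eta(y)=1,\,G_y\}$ is \emph{not} finite-range dependent, because $\bar\eta$ is only shift-ergodic and can carry arbitrarily long-range dependence; the local graphical event $G_y$ does nothing to truncate the correlations coming from $\bar\eta$. So the proposed Borel--Cantelli/concentration step producing $O(\log s)$ gaps has no justification under the stated hypotheses. Replace that detour by the ergodic-theorem argument above (or by the paper's house-tip computation) and the proof is sound.
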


\begin{proof}
Let $\xi^{(\mu)}$, $\xi^{(\bar{\mu})}$ and $\xi^{(\mathbf{1})}$ be copies of the contact process started from the
corresponding initial measures, constructed with the same graphical representation given by $H,I$ and such 
that $\xi_0^{(\bar{\mu})} \le \xi_0^{(\mu)}$. Since this coupling preserves the ordering, we have
 $\xi^{(\bar{\mu})}_t \le \xi^{(\mu)}_t \le \xi^{(\mathbf{1})}_t$ for all $t \ge 0$, and so we may assume
 that $\mu$ is non-trivial, shift-invariant and ergodic.

Denote by $P$ the joint law of $\xi_0^{(\mu)}$, $H$, $I$. Regarding $P$ as a law on the product space 
\begin{equation}
\left( \{0,1\} \times D(\N_0, [0,\infty))^2 \right)^{\Z} 
=  \{0,1\}^{\Z} \times \left(D(\N_0, [0,\infty))^2 \right)^{\Z},
\end{equation}
where $D(\N_0, [0,\infty))$ is the space of c\`adl\`ag functions from $[0,\infty)$ to $\N_0$,
we see that $P$ is shift-ergodic because it is the product of probability measures that are shift-ergodic, namely, 
$\mu$ and the distributions of $H$ and $I$. Let
\begin{equation}
\label{defconesurvival}
\Lambda_x := \big\{ \eta(x) = 1,\, \left(x-L_t(x)\right) \wedge 
\left(R_t(x)-x \right) \ge \lfloor (\iota/2)t \rfloor  \,\,\forall\,t \geq 0\big\},
\end{equation}
i.e., the event that $x$ generates a ``wide-spread infection'' (moving at speed at least
half the typical asymptotic speed $\iota$). Since $\Lambda_x$ is a translation of 
$\Lambda_0$, we have
\begin{equation}
\label{avwsinf}
\lim_{n \to \infty}\frac{1}{n}\sum_{x=1}^n \mathbbm{1}_{\Lambda_x} 
= P(\Lambda_0) = P\left(\xi_0(0)=1\right)P\left(\Lambda_0 \mid \xi_0(0) = 1\right) := \varrho > 0 \;\; P \text{-a.s.},
\end{equation}
where the last inequality is justified as follows: $P(\xi_0(0) = 1)>0$ since $\mu$ is assumed to be non-trivial,
and $P(\Lambda_0 \mid \xi_0(0) = 1) > 0$ by \eqref{convR0} and local modifications of the 
graphical representation.

Next, for $n \in \N$, define $Z_n \in \N$ by the equation
\begin{equation}
\label{defLn}
\sum_{x=1}^{Z_n}\mathbbm{1}_{\Lambda_x} = n.
\end{equation}
Then we also have
\begin{equation}
\label{convLn}
\lim_{n \to \infty}\frac{Z_n}{n} = \varrho^{-1} \qquad P \text{-a.s.}
\end{equation}
$(Z_n)_{n \in \N}$ marks the positions of wide-spread infections to the right 
of the origin, i.e., $x>0$ such that $\Lambda_x$ occurs. Equation \eqref{convLn} means 
that these wide-spread infections are not too far apart. Extending the definition of 
$Z_n$ to the negative integers, we obtain analogously that $\lim_{n \to \infty}n^{-1}
(-Z_{-n}) = \varrho^{-1}$ $P$-a.s. Let $\mathcal{Z} := \cup_{n \in \N} \{Z_n,Z_{-n}\}$ and
\begin{equation}
\label{defSaferegion} 
\mathcal{S} := \big\{(y,t) \in \Z \times [2/\iota,\infty) \colon\,\exists\,x \in \mathcal{Z} 
\text{ such that } |y-x| \leq (\iota/2)t -1 \big\}.
\end{equation}
Then $\mathcal{S}$ is the union of cones of inclination angle $\iota/2$ with tips at 
$(2/\iota, z)$ with $z \in \mathcal{Z}$ (see Fig.~\ref{fig-saferegion}). We call 
$\mathcal{S}$ the \emph{safe region}. This is justified by the following fact, whose 
proof is a direct consequence of Lemma~\ref{depinclusters}.

\begin{lemma}
\label{Sissafe}
If $(x,t) \in \mathcal{S}$, then $\xi^{(\mu)}_t(x) = \xi^{(\mathbf{1})}_t(x)$.
\end{lemma}

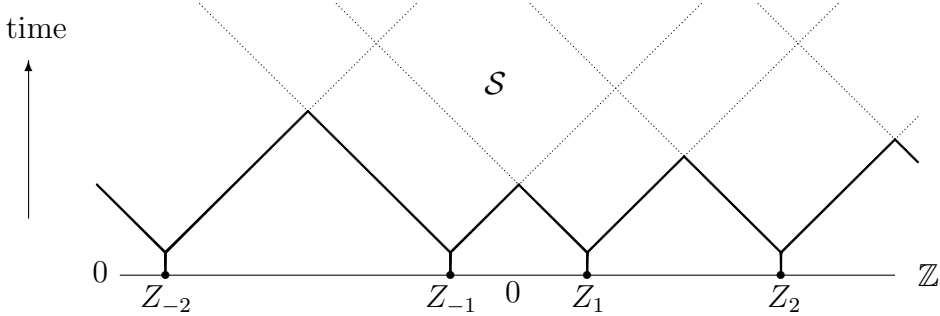
\begin{figure}[hbtp]
\vspace{1.5cm}
\begin{center}
\setlength{\unitlength}{0.3cm}
\begin{picture}(32,8)(0,-1)
\put(0,-1){\line(1,0){34}}
\qbezier[100](2,0)(7,5)(13,11)
\qbezier[100](14.5,0)(9.5,5)(3.5,11)
\qbezier[100](14.5,0)(19.5,5)(25.5,11)
\qbezier[100](20.5,0)(15.5,5)(9.5,11)
\qbezier[100](20.5,0)(25.5,5)(31.5,11)
\qbezier[100](29,0)(24,5)(18,11)
\qbezier[54](29,0)(32,3)(35,6)
\qbezier[54](34,5)(31,8)(28,11)
{\thicklines
\qbezier(2,0)(1,1)(-1,3)
\qbezier(2,0)(3,1)(8.25,6.25)
\qbezier(14.5,0)(11.5,3)(8.25,6.25)
\qbezier(14.5,0)(15.5,1)(17.5,3)
\qbezier(20.5,0)(19.5,1)(17.5,3)
\qbezier(20.5,0)(22.5,2)(24.75,4.25)
\qbezier(29,0)(27,2)(24.75,4.25)
\qbezier(29,0)(31,2)(34,5)
\qbezier(34,5)(34.5,4.5)(35,4)
\qbezier(2,-1)(2,0)(2,0)
\qbezier(14.5,-1)(14.5,0)(14.5,0)
\qbezier(20.5,-1)(20.5,0)(20.5,0)
\qbezier(29,-1)(29,0)(29,0)
}
\put(2,-1){\circle*{.35}} 
\put(14.5,-1){\circle*{.35}}
\put(20.5,-1){\circle*{.35}} 
\put(29,-1){\circle*{.35}}
\put(0.9,-2.4){$Z_{-2}$} 
\put(13.4,-2.4){$Z_{-1}$}
\put(16.9,-2.2){$0$}
\put(19.8,-2.4){$Z_1$} 
\put(28.4,-2.4){$Z_2$}
\put(-1.2,-1.3){$0$}
\put(-4,1.5){\vector(0,1){7}}
\put(-5,9.5){\text{time}}
\put(35,-1.5){$\mathbb{Z}$}
\put(16,7){$\mathcal{S}$}
\end{picture}
\end{center}
\caption{\small Cones have inclination angle $\iota/2$. The safe region $\mathcal{S}$
lies above the thick lines.} 
\label{fig-saferegion}
\end{figure}

By Lemma~\ref{Sissafe}, it is enough to prove that, for any $m>0$,
\begin{equation}
\label{finiteintersection}
V_m \cap \mathcal{S}^c \text{ is a bounded subset of } 
\Z \times [0,\infty) \quad P \text{-a.s.}
\end{equation}
To that end, note that $\mathcal{S}^c$ is contained in the union of space-time ``houses'' (unions of 
triangles and rectangles) with base at time $0$. The tips of the houses to the right 
of $0$ form a sequence with spatial coordinates $\tfrac12(Z_{n+1}+Z_n)$ and temporal 
coordinates $(Z_{n+1}-Z_n+2)/\iota$, $n \in \N$. By \eqref{convLn}, the ratio of 
temporal/spatial coordinates tends to $0$ as $n \to \infty$, so that only finitely 
many tips can be inside $V_m$. The same is true for the tips of the houses 
to the left of $0$. Therefore $V_m$ touches only finitely many houses, which 
proves \eqref{finiteintersection}. 
\end{proof}

\begin{remark}
\label{rem:initconfigposdensity}
\emph{
It is possible to show that \eqref{eq:sCPcoupled} holds for any initial configuration that has a 
positive lower density of infections to the right and to the left of the origin.  We will not pursue 
this extension here, and content ourselves with giving a sketch of a proof strategy that uses 
the techniques from Section~\ref{sec:moreCP} below. Let
\begin{equation}
\begin{aligned}
\Lambda^*_x := \Big\{ 
& \text{there exist two paths } \pi^-_s, \pi^+_s \text{ such that } (x,0) \leftrightarrow (\pi^{\pm}_s,s)\\ 
& \text{and } \lfloor \iota/2 s \rfloor \le \pm (\pi^{\pm}_s - x)  \le 2\iota s\;\,\forall\;s \ge 0 \Big\}.
\end{aligned}
\end{equation}
Fix $x$ with $\xi_0(x) = 1$.  With the help of the methods used in the proof of Lemma~\ref{lemma:momZdelta}
below, we may show that if $\Lambda^*_x$ does not occur, then there is a positive random variable $L_x$ with a (uniform) exponential moment such that, for any $k \in \N$, 
$\Lambda^*_{x+L_x+k}$ is independent of $\Lambda^*_x,L_x$ and distributed as $\Lambda^*_0$. 
Therefore, after a geometric number of trials 
we find a point $y>x$ such that $\xi_0(y) = 1$ and $\Lambda^*_y$ occurs. Next, we use Lemma~\ref{lemma:stochdomleftpi} 
and the FKG-inequality (see \cite{BeGr91}) to forget all information gathered so far and start afresh at the next point $z>y$ 
such that $\xi_0(z) = 1$. In this way we prove that the configuration $\eta^*(x) := \xi_0(x) \mathbbm{1}_{\Lambda^*_x}$
has a positive lower density, and from this point on we may continue as in the proof of Proposition~\ref{prop:sCPcoupled_ergodic}.
}
\end{remark}


\section{More on the contact process}
\label{sec:moreCP}

In this section we collect some additional facts about the contact process on $\Z$
that will be needed in the remainder of the paper. The proofs rely on geometric 
observations that will also illuminate the proof strategies developed in 
Sections~\ref{sec:speed}--\ref{sec:regeneration}.

In the following we will use the notation
\begin{equation}\label{notZlex}
\Z_{\le x} := \Z \cap (\infty, x]
\end{equation}
and analogously for $\Z_{\ge x}$.

\paragraph{Stochastic domination.}
We start with a useful alternative construction of the equilibrium $\nu_\lambda$. 
Let $\eta(x) := \mathbbm{1}_{\{C_t(x) \neq \emptyset \; \forall \; t \ge 0\}}$. Then,
by the graphical representation, $\eta$ has distribution $\nu_\lambda$. This follows 
from duality (see Liggett~\cite{Li85}, Chapter VI). We can also graphically construct 
the contact process starting from $\nu_\lambda$: extend the graphical representation 
to negative times, and declare $\xi_t(x) = 1$ if and only if for all $0 \leq s \leq t$ 
there exists a $y$ such that $(y,s) \leftrightarrow (x,t)$, i.e., if and only if there 
exists an infinite infection path going backwards in time from $(x,t)$.

Let $\bar \nu_\lambda$ denote the restriction of $\nu_\lambda$ to $\Z_{\le-1}$. 
Abusing notation, we will write the same symbol to denote the measure on $\Omega$ 
that is the product of $\bar \nu_\lambda$ with the measure concentrated on all sites 
healthy to the right of $-1$. Using the alternative construction above, we can prove 
that the restriction of $\nu_\lambda(\cdot \mid \eta(0) = 1)$ to $\Z_{\le -1}$ is 
stochastically larger than $\bar\nu_\lambda$. In the following, we will focus on a 
similar result for the distribution of $\xi_t$ to the left of certain infection paths.

For $\varpi_{[0,t]}$ a nearest-neighbor c\`adl\`ag path with values in $\Z$, let
\begin{align}
\label{deffiltvarpi}
\bar{\mathcal{R}}^{\varpi}_t := \sigma \Big\{ & \big(\xi_0(x)\big)_{x \ge \varpi_0}, 
\nonumber\\
&\big(H_v(x)-H_u(x), I_v(x)-I_u(x)\big)_{\{(x,u,v) \in \Z \times [0,t]^2\colon\, 
u \le v,\,\, x \ge \sup_{s \in [u,v]}\varpi_s\} }\Big\}.
\end{align}
Suppose that $\pi_{[0,t]}$ is a random path of the same type, with the following 
properties:
\begin{enumerate}
\item[(p1)] 
$\xi_0(\pi_0)=1$ a.s.\ and $(\pi_s,s) \leftrightarrow (\pi_u,u)$ for all $s,u \in [0,t]$.
\item[(p2)] 
$\pi$ is $\mathcal{F}$-adapted and $\{\pi_s \ge \varpi_s \; \forall \; s \in [0,t]\} \in 
\bar{\mathcal{R}}^\varpi_t$ for all deterministic paths $\varpi$.
\end{enumerate}
We call $\pi$ a \emph{random infection path} (see Fig.~\ref{fig:infecpath}), a name 
that is justified by (p1). Property (p2) means that $\pi$ is causal and that, when 
we discover it, we leave the graphical representation to its left untouched. For 
such $\pi$, let
\begin{align}
\label{deffiltpi}
\mathcal{R}^{\pi}_t := \Big\{ A \in \mathcal{F}_{\infty}\colon\, 
&A \cap \{\pi_s \ge \varpi_s \; \forall \; s \in [0,t]\} \in 
\bar{\mathcal{R}}^{\varpi}_t \text{ for every deterministic} \nonumber\\ 
&\text{nearest-neighbor c\`adl\`ag path } \varpi_{[0,t]}\Big\}. 
\end{align}
Note that, since $\pi$ is an infection path, also $(\xi_s(x))_{x \ge \pi_s} \in 
\mathcal{R}^{\pi}_t$ for each $s \in [0,t]$ (see the proof of Lemma~\ref{depinclusters}).
We have the following stochastic domination result.

\begin{lemma}
\label{lemma:stochdomleftpi}
For any random infection path $\pi_{[0,t]}$ as above, the law of $\xi_t(\cdot+\pi_t+1)$ 
under $\mathbb{P}_{\bar\nu_{\lambda}}(\cdot \mid \mathcal{R}^{\pi}_t)$ is stochastically 
larger than $\bar \nu_{\lambda}$.
\end{lemma}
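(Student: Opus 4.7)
The plan is to couple $\xi$ under $\mathbb{P}_{\bar\nu_\lambda}(\cdot \mid \mathcal{R}^\pi_t)$ with an auxiliary equilibrium process $\zeta \sim \nu_\lambda|_{\Z_{\le \pi_t}}$ built from the graphical construction of $\nu_\lambda$ via infinite backward infection paths, using $\pi$ as a ``forced-infection barrier'' that any boundary-crossing backward path must pick up.  As a preliminary, since $\xi_0(\pi_0)=1$ and $\bar\nu_\lambda$ vanishes on $\Z_{\ge 0}$ one has $\pi_0 \le -1$, so the marginal of $\xi_0|_{\Z_{\le \pi_0-1}}$ under $\bar\nu_\lambda$ is $\nu_\lambda|_{\Z_{\le \pi_0-1}}$.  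Using (p2) to decompose over deterministic c\`adl\`ag paths $\varpi$ with $\pi \ge \varpi$ (together with a technical argument reducing to the case $\pi = \varpi$), I reduce to the situation where, conditionally on $\mathcal{R}^\pi_t$, the Poisson processes strictly to the left of $\pi$ and the initial configuration $\xi_0|_{\Z_{\le \pi_0-1}}$ retain their iid-Poisson and $\nu_\lambda$-restricted distributions independently of the conditioning.

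I would then realize $\xi_0|_{\Z_{\le \pi_0 - 1}}$ via backward paths in an extension of the left-of-$\pi$ graphical representation to $(-\infty,0]$ by independent fresh Poisson processes, and define the auxiliary $\zeta \in \{0,1\}^{\Z_{\le \pi_t}}$ by $\zeta(y) = 1$ iff there is an infinite backward infection path from $(y,t)$ in the \emph{doubly extended} representation: the real left-of-$\pi$ representation on $[0,t]$, the same fresh past extension, and an additional fresh independent Poisson representation for the right of $\pi$ on $[0,t]$.  Since this doubly extended representation is iid Poisson and independent of $\mathcal{R}^\pi_t$, the standard upper-invariant-measure construction yields $\zeta \sim \nu_\lambda|_{\Z_{\le \pi_t}}$ conditional on $\mathcal{R}^\pi_t$.

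The heart of the argument is then the pointwise inequality $\xi_t(y) \ge \zeta(y)$ for every $y \le \pi_t$, obtained by tracing the witness backward path from $(y,t)$.  Either this path stays strictly left of $\pi$ throughout $[0,t]$ and reaches $(x,0)$ with $x < \pi_0$, in which case its continuation in the shared past extension forces $\xi_0(x)=1$ and its reversal supplies a forward infection path in the real representation from $(x,0)$ to $(y,t)$; or the path touches $\pi$ somewhere on $[0,t]$, and letting $\tau$ denote the supremum of touching times, a discrete intermediate-value argument (using the a.s.\ non-simultaneity of Poisson events) places the path strictly left of $\pi$ on $(\tau,t]$, so its reversal is a forward infection path in the real representation from $(\pi_\tau,\tau)$ --- infected by the path property (p1) --- to $(y,t)$.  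In both cases $\xi_t(y)=1$, and shifting by $\pi_t+1$, with the zero-values of $\bar\nu_\lambda$ on $\Z_{\ge 0}$ trivially dominated, yields the stated domination.

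The main technical subtlety I expect to face lies in the local behavior at the touching time $\tau$: one must verify that the arrow by which the backward path departs $(\pi_\tau,\tau)$ into $(\tau,t]$ belongs to the left-of-$\pi$ part of the representation (so it is genuinely shared between $\zeta$ and $\xi$) and that no $H$-cross at $(\pi_\tau,\tau)$ can block the extracted forward infection path.  Both of these points should follow from the infection-path property of $\pi$ together with the a.s.\ disjointness of Poisson events, but getting them right requires careful bookkeeping of which arrows and crosses at the random boundary $\pi$ lie inside $\mathcal{R}^\pi_t$ versus outside.
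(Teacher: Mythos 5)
Your high-level strategy matches the paper's: build an auxiliary process from an independent graphical representation on the complement of the left-of-$\pi$ region, read off its equilibrium marginal through infinite backward infection paths, and prove pointwise domination by tracing a witness backward path and splitting on whether it touches $\pi$. The touching-time bookkeeping you flag at the end is also the right subtlety. However, there is a genuine gap in your reduction step: you assert that, conditionally on $\mathcal{R}^\pi_t$, the configuration $\xi_0\vert_{\Z_{\le \pi_0-1}}$ retains its $\nu_\lambda$-restricted law and is independent of the conditioning. This is false. Under $\bar\nu_\lambda$ the restriction of $\xi_0$ to $\Z_{\le -1}$ has the law $\nu_\lambda$, which is \emph{not} a product measure (it has non-trivial spatial correlations), and $\mathcal{R}^\pi_t$ contains $(\xi_0(x))_{x\ge \pi_0}$; conditioning on these values therefore changes the law of $(\xi_0(x))_{x<\pi_0}$. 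As a result, the ``fresh past extension'' you use to realize $\xi_0\vert_{\Z_{\le\pi_0-1}}$, being independent of $\mathcal{R}^\pi_t$, does not reproduce the real $\xi_0$, and the step where a backward path that survives to $(x,0)$ is said to force $\xi_0(x)=1$ in the real process is unjustified.

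The repair is precisely what you omitted: represent $\xi_0$ itself via a \emph{real} (not fresh) graphical representation on $\Z\times(-\infty,0]$, with additional healing marks at $(x,0)$ for $x\ge 0$ to encode $\bar\nu_\lambda$, and extend $\pi$ to negative times as the right-most infinite backward infection path from $(\pi_0,0)$, which exists because $\xi_0(\pi_0)=1$. After checking that the extended path still satisfies (p1)--(p2) and extending $\mathcal{R}^\pi_t$ accordingly, the Poisson events on $D:=\{(x,s)\in\Z\times\R:\, s>t \text{ or } \pi_s>x\}$ (including its negative-time part) are genuinely conditionally i.i.d.\ Poisson given $\mathcal{R}^\pi_t$. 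This absorbs the spatial correlations of $\xi_0$ into the negative-time, left-of-$\pi$ part of the \emph{shared} graphical representation, so the continuation of a surviving backward path into the past is now a continuation in the same representation that builds $\xi_0$; with that in place, your backward-path comparison goes through.
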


\begin{proof}
Construct $\mathbb{P}_{\bar\nu_{\lambda}}$ from a graphical representation on $\Z \times \R$ 
as outlined above by adding healing events on $(x,0)$ for each $x \in \Z_{\ge 0}$.
Extend $\pi$ to negative times by making it equal to the right-most infinite infection path 
going backwards in time from $(\pi_0,0)$. (Such a path exists because $\xi_0(\pi_0) = 1$.) 
We may check that the resulting path still has properties (p1) and (p2). Extend also 
$\mathcal{R}^{\pi}_t$ to include negative times.

Next, regard $H$ and $I$ as Poisson point processes on subsets of $\Z \times \R$. Let
(see Fig.~\ref{fig:infecpath})
\begin{equation}
\label{pstochdomleftpieq1}
D := \big\{(x,s) \in \Z \times \R \colon\, s > t \text{ or } \pi_s > x\big\}.
\end{equation}
\vspace{-0.5cm}

\begin{figure}[htb]
\begin{picture}(0.5,0.5)
\put(110,0){\includegraphics[height= 120pt,width=240pt]{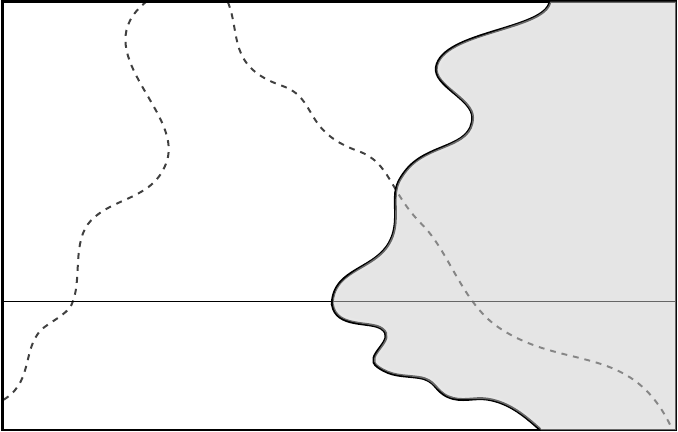}}
\end{picture}
\put(90,10){\vector(0,1){100}}
\put(60,100){\small\mbox{time}}
\put(360,-5){\small\mbox{space}}
\put(223,-10){$\pi_0$}
\put(300,127){$\pi_t$}
\put(100,117){$t$}
\put(100,33){$0$}
\put(185,55){\large\mbox{$D$}}
\put(300,70){\large\mbox{$D^c$}}
\caption{\small The thick line represents the random infection path $\pi$. 
The dashed lines represent other infection paths.} 
\label{fig:infecpath}
\end{figure}

\noindent
Given $\mathcal{R}^{\pi}_t$, by (p2) $H$ and $I$ are still Poisson point processes 
with the same densities on $D$. This can be justified first for $\pi$ taking values 
in a countable set and then for general $\pi$ using right-continuity.

With this observation we can couple $\mathbb{P}_{ \nu_\lambda}$ to $\mathbb{P}_{
\bar \nu_\lambda}(\cdot \mid \mathcal{R}^{\pi}_t)$ in the following way. Draw 
independent Poisson point processes $\hat H$, $\hat I$ on $D^c$. Take $\hat \xi$ to 
be the contact process obtained by using $H$, $I$ on $D$ and $\hat H$, $\hat I$ on 
$D^c$. Then $\hat \xi$ is distributed as the contact process under $\mathbb{P}_{
\nu_\lambda}$, and is independent of $\mathcal{R}^{\pi}_t$. Furthermore, 
$\xi_t(x) \ge \hat \xi_t(x)$ for all $x < \pi_t$. Indeed, if $\xi_t(x)=1$, then 
infinite infection paths going backwards in time must either stay inside $D$ or 
cross $\pi$, so that, by (p1), $\xi_t(x) = 1$ as well.
\end{proof}

\begin{remark}
\label{remark:stoptimes}
In Lemma~{\rm \ref{lemma:stochdomleftpi}}, we may replace $t$ by a finite stopping time 
$\mathcal{T}$ w.r.t.\ the filtration $\mathcal{F}$, as long as the event in {\rm (p2)} is 
replaced by $\{\mathcal{T} \le t, \pi_s \ge \varpi_s \; \forall \; s \in [0,\mathcal{T}]\}$ 
and we add $\mathcal{T}$ to $\mathcal{R}^{\pi}_{\mathcal{T}}$. We may also enlarge all 
filtrations by adding information that is independent of $\xi_0,H,I$, in particular, 
$N_{[0,t]}$ and $U_{[1, N_t]}$ (recall Section~{\rm \ref{subsec:RW}}).
\end{remark}

\paragraph{Infection range.}
Lemma~\ref{lemma:momZdelta} below concerns the positions of wide-spread infections. For 
$\delta \in (0, \iota)$ and $x \in \Z$, let $\mathcal{W}^{\delta}_x := \{(z,t) \in \Z 
\times [0,\infty) \colon\, (\iota-\delta)t-1 < z-x \le (\iota + \delta)t \}$ be a wedge 
between two lines of inclination $\iota - \delta$ and $\iota+\delta$. Set $C^{\delta}_t(x) 
:= \{y \in \Z \colon\ (y,t) \leftrightarrow (x,0) \text{ via a path contained in } 
\mathcal{W}^{\delta}_x\}$,
and
\begin{equation}
\label{defZdelta}
Z_{\delta}(x) := \sup \big\{z \in \Z_{< x} \colon\, \xi_0(z) = 1, 
C^{\delta}_t(z) \neq \emptyset \; \forall \; t \ge 0 \big\},
\end{equation}
i.e., the first infected site to the left of $x$ that spreads its infection forever inside 
a wedge.

\begin{lemma}
\label{lemma:momZdelta}
If $\lambda \in (\lambda_c,\infty)$ then $|Z_{\delta}(x)-x|$ has exponential moments 
under $\mathbb{P}_{\bar \nu_\lambda}$ for every $\delta \in (0,\iota)$, uniformly in 
$x \in \Z_{\le 0}$.
\end{lemma}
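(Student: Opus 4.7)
To bound $|Z_\delta(x) - x|$ it suffices to produce \emph{some} $y \in \Z_{< x}$ with $\xi_0(y) = 1$ and $\Lambda^*_y := \{C^\delta_t(y) \ne \emptyset \;\forall\; t \ge 0\}$, for then $|Z_\delta(x) - x| \le |y - x|$. The plan is an iterative leftward scan with sprinkling that produces such a $y$ within a distance that has a uniform exponential moment.

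First I would prove the one-shot lower bound $p := \mathbb{P}(\Lambda^*_0 \mid \xi_0(0) = 1) > 0$, which, by translation invariance of $(H, I)$, then holds for every $y$ conditional on $\xi_0(y) = 1$. This is standard for the supercritical regime: starting from $\mathbbm{1}_0$, survival has positive probability, and on survival the rightmost infection $R_t$ satisfies $R_t/t \to \iota$ with sublinear fluctuations, so with positive probability there is an infection path from $(0,0)$ that lies inside $\mathcal{W}^\delta_0$ for all $t \ge 0$. I would also show that on $\Lambda^{*,c}_y$ the wedge-extinction time $T_y := \inf\{t : C^\delta_t(y) = \emptyset\}$ has uniform exponential moments — this is a standard large-deviation estimate for the supercritical contact process — so the spatial footprint $R_y := \lceil(\iota+\delta) T_y\rceil + 1$ of a failed attempt is likewise exponentially tight.

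Now iterate: set $Y_0 := x$ and, if $\Lambda^*_{Y_k}$ fails, let $Y_{k+1}$ be the largest $z < Y_k - R_{Y_k}$ with $\xi_0(z) = 1$; otherwise stop with the witness $y := Y_k$. The key geometric observation is that the information needed to detect failure at $Y_k$ is contained in $\mathcal{W}^\delta_{Y_k} \cap \{s \le T_{Y_k}\} \subseteq \{z \ge Y_k,\,s \le T_{Y_k}\}$, while $\mathcal{W}^\delta_{Y_{k+1}}$ reaches a site $\ge Y_k$ only at time $(Y_k - Y_{k+1})/(\iota+\delta) > T_{Y_k}$, by the choice $Y_{k+1} \le Y_k - R_{Y_k} - 1$. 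The same estimate, applied inductively, gives disjointness from every earlier observation region. Applying Lemma~\ref{lemma:stochdomleftpi} through Remark~\ref{remark:stoptimes} — with the constant path $\pi \equiv Y_k$ and the $\mathcal{F}$-stopping time $T_{Y_k}$ — refreshes the initial configuration to the left of $Y_k - R_{Y_k}$, and combined with Bernoulli domination of the upper invariant measure (Liggett--Steif) this makes the $(k{+}1)$-th trial effectively independent of the past: its conditional success probability is $\ge p$, and the step displacement $Y_k - Y_{k+1}$ (skip $R_{Y_k}$ plus gap to the next $1$) has a uniform exponential moment.

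Consequently the first-success index $K$ is stochastically dominated by a Geometric$(p)$ random variable, and $|Z_\delta(x) - x| \le \sum_{k=0}^{K-1}(Y_k - Y_{k+1})$; a standard Wald-type computation then converts the geometric tail of $K$ and the per-step exponential moments into the claimed uniform exponential moment of $|Z_\delta(x) - x|$. The main obstacle is the bookkeeping: one must check that the $\sigma$-algebra generated at the end of step $k$ matches exactly the one for which the stopping-time version of Lemma~\ref{lemma:stochdomleftpi} produces the stochastic domination, so that the graphical representation and initial configuration outside the observed region are genuinely free to be resampled for the next trial. A secondary technical point is the exponential-moment estimate for $T_y$ on $\Lambda^{*,c}_y$, which is a standard but non-trivial large-deviation result for the supercritical contact process on $\Z$.
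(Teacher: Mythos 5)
Your high-level scheme --- scan leftward, skip the spatial footprint of each failed trial, bound the number of trials geometrically, then combine per-step exponential moments --- matches the paper's. But the mechanism you propose for making the trials ``effectively independent'' does not work, and it is also unnecessary. First, Lemma~\ref{lemma:stochdomleftpi} (even via Remark~\ref{remark:stoptimes}) gives stochastic domination of the configuration $\xi_{\mathcal{T}}$ \emph{at the stopping time}, to the left of a random infection path; it says nothing about $\xi_0$. Since $Z_\delta(x)$ and each event $\Lambda^*_{Y_k}$ are defined entirely in terms of $\xi_0$ and wedge-clusters growing from time $0$, a statement about the time-$\mathcal{T}$ law cannot ``refresh the initial configuration.'' Second, the constant path $\pi\equiv Y_k$ is in general not a valid random infection path: property (p1) requires $(\pi_s,s)\leftrightarrow(\pi_u,u)$ for all $s,u$ in the interval, which for a constant path forbids healing events at $Y_k$; but the wedge-extinction time $T_{Y_k}$ is finite precisely because healing events occur, typically at $Y_k$ itself among other places. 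So the hypotheses of Lemma~\ref{lemma:stochdomleftpi} are not met.

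The correct and much simpler fix is the one the paper uses: observe that $|Z_\delta(x)-x|$ is a non-increasing function of $\xi_0$, so it suffices to prove the exponential moment under a Bernoulli product measure $\mu_\lambda$ that is stochastically dominated by $\nu_\lambda$ (Liggett--Steif, Durrett--Schonmann, van den Berg--H\"aggstr\"om--Kahn). You already invoke this domination, but defer it to inside the iteration, where it does not by itself restore independence. Applied once at the outset, it makes the initial configuration a product measure, and then the inductive independence of $(\Delta_{n+1},T_{n+1})$ from $(Z_k,T_k)_{k\le n}$ is immediate: the next trial reads the graphical representation inside a wedge disjoint from all earlier wedges (thanks to the $\lceil(\iota+\delta)T_n\rceil$ skip), and reads $\xi_0$ only at sites strictly to the left of everything previously inspected, which under a product measure are fresh. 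No stochastic-domination lemma is needed during the iteration, and the ``main obstacle'' you flag --- matching the $\sigma$-algebra to the one in Lemma~\ref{lemma:stochdomleftpi} --- dissolves. With that replacement, the rest of your outline (positive one-shot success probability, exponential moment for the failure time conditionally on failure, geometric number of trials, Cauchy--Schwarz/Wald-type conclusion) is essentially the paper's proof.
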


\begin{proof}
We will use the fact that, for any $\lambda \in (\lambda_c,\infty)$, $\nu_\lambda$ 
stochastically dominates a non-trivial Bernoulli product measure $\mu_\lambda$. This 
follows from Liggett and Steif~\cite{LiSt05}, Theorem 1.2, Durrett and 
Schonmann~\cite{DuSc88}, Theorem 1, and van den Berg, H\"aggstr\"om and 
Kahn~\cite{vdBeHaKa06}, Theorem 3.5. Since $Z_{\delta}(x)$ is monotone in $\xi_0$, 
it is therefore enough to prove the statement under $\mathbb{P}_{\mu_{\lambda}}$. 
We may also assume $x=0$, as $Z_{\delta}(x)$ does not depend on $(\xi_0(z))_{z \ge x}$. 

Construct a sequence of pairs $(Z_n,T_n)_{n\in\N_0}$ as follows. Set $Z_0 = T_0 :=0$ 
and, recursively for $n\in\N_0$,
\begin{equation}
\label{pmomZdeltaeq1}
\begin{array}{rcll}
Z_{n+1} 
&:=& \left\{ \begin{array}{l} Z_n \\
\sup\{z < Z_n - \lceil (\iota + \delta) T_n \rceil \colon\, \xi_0(z) = 1\} 
\end{array} 
\right. 
&\begin{array}{l} \text{ if } T_n = \infty,\\ \text{ otherwise,} \end{array} \\
T_{n+1} 
&:=& \left\{ \begin{array}{l} \infty \\
\inf\{t>0 \colon\, C^{\delta}_t(Z_{n+1}) = \emptyset \} 
\end{array}
\right.
&\begin{array}{l}\text{ if } T_n = \infty,\\ \text{ otherwise.} \end{array}
\end{array}
\end{equation}
Conditionally on $T_n < \infty$, $\Delta_{n+1}:=Z_{n+1}-Z_{n}+\lceil(\iota+\delta)T_n \rceil$ 
and $T_{n+1}$ are independent of $(Z_k,T_k)_{k=1}^n$ and distributed as $(Z_1$,$T_1)$. This 
is because the region of the graphical representation plus initial configuration on which 
$T_{n+1}$ and $\Delta_{n+1}$ depend is disjoint from the region on which the previous random
variables depend. Since $\mu_\lambda$ is a non-trivial product measure, $|Z_1|$ has 
exponential moments. Noting that $T_1$ is independent of $Z_1$ we conclude, using standard 
facts about the contact process (see Liggett~\cite{Li85}, Chapter VI, Theorem 2.2, Corollary 
3.22 and Theorem 3.23), that $\mathbb{P}_{\mu_\lambda}(T_1 = \infty)>0$ and that, conditionally 
on $T_1< \infty$, $T_1$ has exponential moments. Defining the random index
\begin{equation}
\label{pmomZdeltaeq2}
K := \inf\{n \in \N \colon\, T_n = \infty\}
\end{equation}
whose distribution is $\mathrm{GEO}(\mathbb{P}_{\mu_\lambda}(T_1 = \infty))$, we see that 
$|Z_{\delta}(0)| \le |Z_K|$. Taking $a>0$ such that $\mathbb{E}_{\mu_\lambda}[e^{a(|Z_1| 
+ \lceil (\iota + \delta)T_1\rceil)} \mid T_1 < \infty] < 1/\mathbb{P}_{\mu}(T_1 < \infty)$, 
we get after a short calculation that $\mathbb{E}_{\mu_\lambda}[\mathbbm{1}_{\{K=n\}}
e^{a|Z_n|}]$ decays exponentially in $n$.
\end{proof}


\section{Properties of the speed}
\label{sec:speed}

In this section we prove Theorem~\ref{LLN}(b).

Take $\lambda_{\infty}, \lambda_k \in (\lambda_c,\infty)$ such that 
$\lim_{k \to \infty} \lambda_k = \lambda_{\infty}$, and write 
$\mathbb{P}^{\lambda_\infty}_\mu$, $\mathbb{P}^{\lambda_k}_\mu$ for 
the measure described in Section~\ref{sec:construc} with the corresponding 
parameter and initial measure $\mu$. 
Fix $n \in \N$ and $\epsilon > 0$, and take $L_{n,\epsilon} > 0$ 
such that $\mathbb{E}^{\lambda_k}_{\delta_{\mathbf{1}}}[ N_n 
\mathbbm{1}_{\{N_n > L_{n,\epsilon}\}}] \le \epsilon$ uniformly in 
$k \in \N \cup \{\infty\}$. On the event $\{N_n \le L_{n,\epsilon} \}$, 
$W_n$ depends on $\xi$ only inside the finite space-time region 
$\Z \cap [-L_{n,\epsilon},L_{n,\epsilon}] \times [0,n]$. Therefore
$\mathbb{E}^{\lambda_k}_{\delta_{\mathbf{1}}}[n^{-1}W_n \mathbbm{1}_{\{N_n 
\le L_{n,\epsilon}\}}]$ converges as $k \to \infty$ to the same 
quantity with parameter $\lambda_{\infty}$ (see Liggett~\cite{Li99}, Part I). 
Since $|W_n| \le N_n$, it follows that
\begin{align}
\label{eq:contEWn/n}
\left|\mathbb{E}^{\lambda_k}_{\delta_{\mathbf{1}}}[n^{-1}W_n] 
- \mathbb{E}^{\lambda_{\infty}}_{\delta_{\mathbf{1}}}[n^{-1}W_n]\right| 
\le
& \left|\mathbb{E}^{\lambda_k}_{\delta_{\mathbf{1}}}[n^{-1}W_n 
\mathbbm{1}_{\{N_n \le L_{n,\epsilon}\}}] 
- \mathbb{E}^{\lambda_{\infty}}_{\delta_{\mathbf{1}}}[n^{-1}W_n 
\mathbbm{1}_{\{N_n \le L_{n,\epsilon}\}}]\right| \nonumber \\
& \qquad + 2 \epsilon.
\end{align}
Taking the $\limsup$ as $k \to \infty$ of \eqref{eq:contEWn/n} followed by 
$\epsilon \downarrow 0$, we get that $\lambda \mapsto \mathbb{E}_{\delta_{\mathbf{1}}}
[n^{-1}W_n]$ is continuous. By monotonicity, the latter is also non-decreasing, so it 
follows from \eqref{defv} that $\lambda \mapsto v(\lambda)$ is right-continuous 
and non-decreasing. 

It remains to show that $v(\lambda) \in (v_0, v_1)$ and $\lim_{\lambda\to\infty} v(\lambda)
=v_1$. This will be done in Sections~\ref{subsec:proofvsmall}--\ref{subsec:proofvlarge} below.
These properties come from the fact that the random walk spends positive fractions of its 
time on top of infected sites and on top of healthy sites. To keep track of this, define 
$N^i_t := \#\{n \in \N \colon\, \xi_{J_n}(W_{J_{n-1}})=i\}$, $i \in \{0,1\}$. Recalling the construction of $W$ in Section~\ref{subsec:RW}, we 
may write
\be{repW}
W_t = S^0_{N^0_t}+S^1_{N^1_t},
\ee
where $S^i_n$, $i=0,1$, are discrete-time homogeneous random walks that jump to the right 
with probability $\alpha_i/\gamma$ and to the left with probability $\beta_i/\gamma$. From 
this representation we immediately get the following. 

\begin{lemma}
\label{lemma:speedvsdensity}
\be{eq:speedvsdensity}
\begin{array}{lll}
\liminf\limits_{t\to\infty} t^{-1}W_t = v_0 + (v_1 - v_0)
\liminf\limits_{t \to \infty} (\gamma t)^{-1}N^1_t,\\[0.2cm]
\limsup\limits_{t\to\infty} t^{-1}W_t = v_1 - (v_1 - v_0)
\liminf\limits_{t \to \infty} (\gamma t)^{-1}N^0_t.
\end{array}
\ee
\end{lemma}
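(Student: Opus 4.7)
The plan is to reduce the two identities to the classical SLLN applied separately to the homogeneous sub-walks $S^0, S^1$ and to the Poisson process $N$. First, the SLLN for $N$ gives $N_t/t \to \gamma$ a.s., and writing $\rho^i(t) := (\gamma t)^{-1} N^i_t$, the relation $N_t = N^0_t + N^1_t$ yields $\rho^0(t) + \rho^1(t) \to 1$ a.s. Next, inspecting the construction in \eqref{defwalkskeleton}, the $k$-th increment of $S$ is a deterministic function of $U_k$ and the type $i = \xi_{J_k}(S_{k-1}) \in \{0,1\}$ of the visited site, and conditional on the type being $i$ the outcome is $+1$ with probability $\alpha_i/\gamma$ and $-1$ with probability $\beta_i/\gamma$, independently of everything prior to step $k$ (since the type is measurable with respect to that prior $\sigma$-algebra, while $U_k$ is a fresh uniform). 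Consequently, the marginals $S^0$ and $S^1$ are homogeneous random walks with i.i.d.\ $\pm 1$ increments of mean $v_i/\gamma$, and the classical SLLN gives $S^i_n/n \to v_i/\gamma$ a.s.\ as $n \to \infty$.

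The key technical step is then the random-time statement
\[
t^{-1} S^i_{N^i_t} - v_i \rho^i(t) \longrightarrow 0 \quad \text{a.s.} \qquad (i \in \{0,1\}).
\]
Since $N^i_t$ is non-decreasing in $t$, each sample path falls in one of two cases: either $N^i_t$ stays bounded, so that $|S^i_{N^i_t}|$ remains bounded and both $t^{-1}S^i_{N^i_t}$ and $\rho^i(t)$ vanish; or $N^i_t \to \infty$, in which case $S^i_{N^i_t}/N^i_t \to v_i/\gamma$ a.s., and multiplying by the bounded factor $\gamma \rho^i(t) = N^i_t/t$ (bounded because $N^i_t \le N_t$ and $N_t/t \to \gamma$) yields the claim. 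Summing over $i$ and substituting $\rho^0(t) = 1 - \rho^1(t) + o(1)$ produces
\[
t^{-1} W_t \,=\, v_0 + (v_1-v_0)\rho^1(t) + o(1) \,=\, v_1 - (v_1-v_0)\rho^0(t) + o(1) \quad \text{a.s.}
\]
Since $v_1 - v_0 > 0$ by \eqref{condjumprates2}, taking the $\liminf$ of the middle expression and the $\limsup$ of the right-hand expression produces, respectively, the two formulas in \eqref{eq:speedvsdensity}.

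No serious obstacle arises. The only two subtle points are: (a) the justification that the marginal trajectories $S^0, S^1$, although extracted from a coupled construction driven by the single sequence $(U_k)$, are genuine random walks with i.i.d.\ Bernoulli increments, which follows from the independence of $U_k$ from the type label argued above; and (b) the transfer of the SLLN for $S^i$ from deterministic to the random index $N^i_t$, which is handled cleanly by the monotonicity dichotomy. Everything else is bookkeeping with $\liminf$/$\limsup$ using that $v_1 > v_0$.
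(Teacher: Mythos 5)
Your proposal is correct and takes essentially the same approach the paper intends: the paper simply exhibits the decomposition $W_t = S^0_{N^0_t}+S^1_{N^1_t}$ with $S^0,S^1$ homogeneous random walks and declares the lemma ``immediate,'' and your proof supplies exactly the missing details (SLLN for $N$ and for each $S^i$, the transfer to the random index $N^i_t$ via the boundedness dichotomy, $\rho^0+\rho^1\to1$, and the sign $v_1>v_0$ when passing to $\liminf/\limsup$). The one point worth emphasizing for the record is the justification that $(S^0,S^1)$ are genuine i.i.d.-increment walks despite being extracted from a single driving sequence $(U_k)$: since the type $\xi_{J_k}(S_{k-1})$ is measurable with respect to $\sigma(\xi,N,(U_j)_{j<k})$ while $U_k$ is a fresh uniform independent of that $\sigma$-algebra, the successive healthy (resp.\ infected) steps occur at predictable indices and carry conditionally i.i.d.\ $\pm1$ increments with the stated probabilities --- which is precisely the optional-sampling argument you sketch.
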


Lemma~\ref{lemma:speedvsdensity} is valid for any dynamic random environment, even without 
a SLLN for $W$. But \eqref{eq:speedvsdensity} shows that a SLLN for $W$ holds with speed $v$ 
if and only if a SLLN holds for $N^1$ with limit $\gamma \rho_{\mathrm{eff}}$, where 
$\rho_{\mathrm{eff}} := (v - v_0)/(v_1 - v_0)$ is the effective density of $1$'s seen by 
$W$. Thus, $v > v_0$ and $v<v_1$ are equivalent to, respectively, $\rho_{\mathrm{eff}}>0$ 
and $\rho_{\mathrm{eff}}<1$.


\subsection{Proof of $v(\lambda) < v_1$}
\label{subsec:proofvsmall}

In the contact process, infected sites heal spontaneously. Therefore it is easier to find 
$0$'s than $1$'s. For this reason, it is easier to prove that $W$ often jumps from healthy 
sites than from infected sites.

\begin{proof}
For $k \in \N$, let $Y_k := \xi_{J_k}(W_{J_{k-1}})$, and note that $\{Y_{k+1} = 0\}$ contains 
all configurations that between times $J_k$ and $J_{k+1}$ have a cross at site $W_{J_k}$ and 
no arrows between $W_{J_k}$ and its nearest-neighbors, i.e., such that the events 
$H_{J_{k+1}}(W_{J_k})-H_{J_k}(W_{J_k}) \ge 1$ and $I_{J_{k+1}}(W_{J_k})-I_{J_k}(W_{J_k}) 
= I_{J_{k+1}}(W_{J_k}-1)-I_{J_k}(W_{J_k}-1) = 0$ occur. The probability of the latter events 
given $\sigma\{(J_k, \xi_s, W_s)_{0 \le s \le J_k} \}$ is constant in $k$ and equal to 
$p:=\gamma/(\gamma+2\lambda)(1+\gamma+2\lambda)$. Therefore the sequence $(Y_k)_{k\in\N}$ 
is stochastically dominated by a sequence of i.i.d.\ $\mathrm{BERN}(1-p)$ random variables, 
which implies that $\liminf_{t\to\infty}t^{-1}N^0_t \ge \gamma p > 0$, so that $v(\lambda) 
< v_1$ by Lemma \ref{lemma:speedvsdensity}.
\end{proof}


\subsection{Proof of $v(\lambda) > v_0$ and $\lim_{\lambda \to \infty}v(\lambda) = v_1$}
\label{subsec:proofvlarge}

This is the harder part of the proof. We will need results from Section~\ref{sec:moreCP}. 
In the following we will assume that $v_0 \le 0$. The case $v_0 > 0$ can be treated 
analogously.

Let us start with an informal description of the argument. The idea is that there 
are ``waves of infection'' coming from $\pm \infty$ from which the random walk cannot 
escape. When $v_0 \le 0$, we can concentrate on the waves coming from the left, 
represented schematically in Fig.~\ref{fig:infecwaves}. Each time the random walk 
hits a new wave, there is an infection path starting from its current location 
and going backwards in time entirely to the left of the random walk path. By 
Lemma~\ref{lemma:stochdomleftpi}, at this time the law of $\xi$ to the left of the 
random walk has an appreciable density, which means that there are new waves coming 
in from locations not very far to the left. On the other hand, any infections to the 
right of the random walk can be ignored, since they only push it to the right. But 
doing so makes the random walk behave as a homogeneous random walk with a non-positive 
drift, meaning that it does not take the random walk long to hit the next infection 
wave. Since at each collision there is a fixed probability for the random walk 
to jump while sitting on an infection, $v(\lambda) > v_0$ will follow from 
Lemma~\ref{lemma:speedvsdensity}. With some care in the computations we also get 
the limit for large $\lambda$.

\begin{figure}[htb]
\begin{picture}(100,100)
\put(120,0){\includegraphics[height= 100pt,width=180pt]{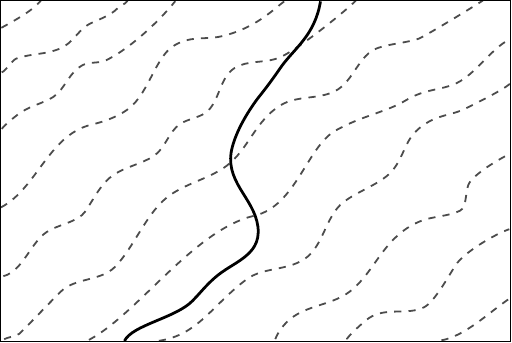}}
\end{picture}
\caption{\small The dashed lines represent infection waves. The thick line 
represents the path of $W$.} 
\label{fig:infecwaves}
\end{figure}

\begin{proof}
Using the graphical representation, we will construct, on a larger probability space, a
second random walk $\hat W$ coupled to $W$ in such a way that $\hat W_t \le W_t$ for all
$t \geq 0$ and that $\hat W$ has a speed with the desired properties. Let
\be{defV}
V_1 := \inf\{t > 0 \colon\, \xi_t(W_t)=1\}.
\ee
Note that $V_1$ has exponential moments under $\mathbb{P}_{\bar{\nu}_{\lambda}}$ by 
Lemma~\ref{lemma:momZdelta} and the fact that $v_0 \le 0$. Let
\be{deftau}
\tau_1 := \inf\big\{t > V_1 \colon\, W_t \neq W_{V_1} \text{ or } 
H_{t}(W_{V_1})>H_{V_1}(W_{V_1})\big\},
\ee
i.e., $\tau_1$ is the first time after time $V_1$ at which either $W$ jumps or there is 
a healing event at the position of the random walk. Note that $\tau_1$ is a stopping time
w.r.t.\ the filtration $\mathcal{G}$ and that, given $\mathcal{G}_{V_1}$, $\tau_1 - V_1$ 
has distribution $\mathrm{EXP}(1+\gamma)$.

We will construct a sequence $(W^{(n)},\tau_n)_{n \in \N}$ with the following properties: 
\begin{enumerate}
\item[(A1)] 
$W^{(n+1)}_t \le W^{(n)}_{\tau_n+t} - W^{(n)}_{\tau_n}$ for all $t \ge 0$;
\item[(A2)] 
$(W^{(n)}, \tau_n)$ is distributed as $\left( W, \tau_1 \right)$ under 
$\mathbb{P}_{\bar \nu_\lambda}$;
\item[(A3)] 
$(W^{(n)}_{[0, \tau_n]}, \tau_n)_{n \in \N}$ is i.i.d.;
\item[(A4)] 
If $\hat v(\lambda) := \mathbb{E}_{\bar{\nu}_{\lambda}}[W_{\tau_1}]/
\mathbb{E}_{\bar{\nu}_{\lambda}}[\tau_1]$, then $\hat{v}(\lambda) > v_0$ 
and $\lim_{\lambda \to \infty} \hat v(\lambda) = v_1$.
\end{enumerate}
Once we have this sequence, we can put $T_0:=0$, $T_n := \sum_{k=1}^{n}\tau_k$ for $n\in\N$,
and
\begin{equation}
\label{defhatW}
\hat{W}_t := \sum_{k=1}^{n} W^{(k)}_{\tau_k} + W^{(n+1)}_{t - T_n} 
\quad \text{ for } \quad T_n \le t < T_{n+1}.
\end{equation}
By (A1), $\hat{W}_t \le W^{(1)}_t$ for all $t \ge 0$. By (A2), the latter is distributed 
as $W$ under $\mathbb{P}_{\bar{\nu}_{\lambda}}$, which by monotonicity is stochastically 
smaller than $W$ under $\mathbb{P}_{\nu_\lambda}$. By (A3), $\lim_{n \to \infty} T_n^{-1}
\hat{W}_{T_n} = \hat{v}(\lambda)$, and so the claim follows from (A4). Thus, it remains to 
construct the sequence $(W^{(n)},\tau_n)_{n\in\N}$ with properties (A1)--(A4). 

To do so, we draw $\xi_0$ from $\bar{\nu}_{\lambda}$, let $\xi^{(1)}:=\xi$, $W^{(1)} := W$, 
define $\tau_1$ as above, and note the following.

\begin{lemma}
\label{claim:stdomafterV}
Under $\mathbb{P}_{\bar\nu_{\lambda}}(\cdot \mid \tau_1, W_{[0,\tau_1]} )$, the law of 
$\xi_{\tau_1}(\cdot+W_{\tau_1})$ is stochastically larger than $\bar\nu_{\lambda}$.
\end{lemma}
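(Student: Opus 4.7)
The plan is to apply Lemma~\ref{lemma:stochdomleftpi} (in the stopping-time version of Remark~\ref{remark:stoptimes}) to a carefully chosen random infection path $\pi$ defined on $[0,\tau_1]$ with $\pi_{\tau_1}=W_{\tau_1}$ and lying weakly to the left of the walker's trajectory, and then convert the resulting stochastic domination of $\xi_{\tau_1}$ on sites left of $W_{\tau_1}$ into the desired bound by a one-site shift. Since $\bar\nu_\lambda$ is supported on configurations vanishing on $\Z_{\ge 0}$, the infection at $(W_{V_1},V_1)$ must originate from some $x_0\le -1$ with $\xi_0(x_0)=1$. Take $\pi_{[0,V_1]}$ to be the rightmost infection path from $(x_0,0)$ to $(W_{V_1},V_1)$, obtained by tracing backwards from $(W_{V_1},V_1)$ and choosing the rightmost option at each arrow of $I$, and set $\pi_s\equiv W_{V_1}$ on $[V_1,\tau_1]$. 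Property (p1) holds on $[0,V_1]$ by construction, and on $[V_1,\tau_1]$ because the definition of $\tau_1$ excludes healing events at $W_{V_1}$, so that $(W_{V_1},s)\leftrightarrow(W_{V_1},u)$ for all $V_1\le s\le u\le \tau_1$.

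The key geometric observation is that $\pi_s\le W_s$ throughout $[0,\tau_1]$, with equality on $[V_1,\tau_1]$. Indeed, $\pi_0\le -1<0=W_0$; for $s\in(0,V_1)$, $\xi_s(\pi_s)=1$ by definition of $\pi$, so $\pi_s=W_s$ would force $\xi_s(W_s)=1$, contradicting the minimality of $V_1$. Both paths being nearest-neighbor c\`adl\`ag in $\Z$ with $\pi_0<W_0$, the strict inequality persists on $[0,V_1)$, while $\pi_{V_1}=W_{V_1}$ by choice. Given this ordering, property (p2) is verified as follows: on the event $\{\pi_s\ge \varpi_s\;\forall\,s\in[0,\tau_1]\}$ one has $W_s\ge \pi_s\ge \varpi_s$, and the rightmost-path construction of $\pi$ together with the walker's queries of $\xi$ along its trajectory can be reconstructed from the restriction of $(\xi_0,H,I)$ to $\{x\ge \varpi_s\}$ and the independent data $(N,U,\tau_1)$ added to the filtration as permitted by Remark~\ref{remark:stoptimes}.

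Applying Lemma~\ref{lemma:stochdomleftpi} in its stopping-time form then yields
\[
\xi_{\tau_1}(\,\cdot\,+W_{V_1}+1)\;\succeq\;\bar\nu_\lambda
\qquad\text{under}\qquad
\mathbb{P}_{\bar\nu_\lambda}\bigl(\,\cdot\,\bigm|\,\mathcal{R}^{\pi}_{\tau_1}\bigr),
\]
and since $\sigma(\tau_1,W_{[0,\tau_1]})\subseteq \mathcal{R}^{\pi}_{\tau_1}$ by the same reconstruction argument, the bound descends to conditioning on $(\tau_1,W_{[0,\tau_1]})$. Using $W_{\tau_1}=W_{V_1}$ and the shift-invariance of $\nu_\lambda$ on $\Z_{\le -1}$, this becomes stochastic domination of $\xi_{\tau_1}(\,\cdot\,+W_{\tau_1})$ restricted to $\Z_{\le -1}$ over $\bar\nu_\lambda$ restricted to $\Z_{\le -1}$; combined with the trivial dominance on $\Z_{\ge 0}$ (where $\bar\nu_\lambda$ vanishes), this gives $\xi_{\tau_1}(\,\cdot\,+W_{\tau_1})\succeq \bar\nu_\lambda$, as claimed.

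The principal obstacle is the right-measurability step underlying both (p2) and the inclusion $\sigma(\tau_1,W_{[0,\tau_1]})\subseteq\mathcal{R}^{\pi}_{\tau_1}$. A priori the walker's queries $\xi_{J_k}(W_{J_{k-1}})$ depend on the graphical representation in a two-sided backward light cone, so verifying that the rightmost infection path, the random time $V_1$, and the full trajectory $W_{[0,\tau_1]}$ are all reconstructible from one-sided right-of-$\varpi$ information requires careful book-keeping, exploiting both the ordering $\pi\le W$ and the fact that before $V_1$ the walker is observed to be on healthy sites, which effectively blocks any causal dependence reaching through the left of $\pi$.
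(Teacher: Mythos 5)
Your construction of $\pi$ and the plan to invoke Lemma~\ref{lemma:stochdomleftpi} via Remark~\ref{remark:stoptimes} matches the paper's strategy, and the verification of (p2) is sound in spirit. However, the final paragraph contains a genuine gap. You assert $W_{\tau_1}=W_{V_1}$, but this is false: by definition $\tau_1$ is the first time after $V_1$ at which \emph{either} $W$ jumps \emph{or} a healing event occurs at $W_{V_1}$, so $W_{\tau_1}\in\{W_{V_1},W_{V_1}-1,W_{V_1}+1\}$. Your argument produces stochastic domination of $\eta_1:=\xi_{\tau_1}(\cdot+W_{V_1})$, not of $\xi_{\tau_1}(\cdot+W_{\tau_1})=\theta_{W_{\tau_1}-W_{V_1}}\eta_1$, and the shift by $W_{\tau_1}-W_{V_1}\in\{0,\pm1\}$ requires a case analysis you never do. The dangerous case is $W_{\tau_1}=W_{V_1}+1$: shifting left by one would naively lose the domination, and what saves it is the observation (which the paper makes) that $W_{\tau_1}\neq W_{V_1}$ forces $\xi_{\tau_1}(W_{V_1})=1$, so $\eta_2(-1)=1$ in that case, and together with independence of $(\eta_1(x))_{x<0}$ from the jump direction one recovers domination by $\bar\nu_\lambda$. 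Without this, the proof is incomplete.

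A secondary imprecision: your claim that ``the definition of $\tau_1$ excludes healing events at $W_{V_1}$'' on $[V_1,\tau_1]$ is too strong. Healing at $W_{V_1}$ is excluded on $[V_1,\tau_1)$ but may occur at the endpoint $\tau_1$ itself (indeed, when it does, that is what triggers $\tau_1$), so $\pi$ is not literally an infection path up to time $\tau_1$, and consequently $\pi_{\tau_1}\le W_{\tau_1}$ may fail (when $W_{\tau_1}=W_{V_1}-1$). The paper handles this by noting explicitly that the only possible violation is a healing event at $(W_{V_1},\tau_1)$, which does not affect the configuration on $\Z_{\le -1}+W_{V_1}$ and hence does not impair the application of Lemma~\ref{lemma:stochdomleftpi}. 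You should include both the case analysis on $W_{\tau_1}-W_{V_1}$ and this remark about the endpoint healing event to close the argument.
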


\begin{proof}
Since $\xi_{V_1}(W_{V_1})=1$, there exists a right-most infection path $\pi_{[0,V_1]}$ connecting 
$(W_{V_1}, V_1)$ to $\Z_{\le -1} \times \{0\}$. Extend $\pi$ to $[V_1, \tau_1]$ by making it 
constant and equal to $W_{V_1}$ on this time interval. Since $\pi_s \le W_s$ for all
$0 \leq s < \tau_1$, we have $(\tau_1,W_{[0,\tau_1]}) \in \mathcal{R}^{\pi}_{\tau_1} \vee 
\sigma(N_{[0,\tau_1]}, U_{[1,N_{\tau_1}]})$. Note that $\pi$ is not an infection path, 
but only because of a possible healing event at time $\tau_1$, which does not affect 
$(\xi_{\tau_1}(x + W_{V_1}))_{x \le -1}$. 
Therefore, by Lemma~\ref{lemma:stochdomleftpi}, 
the distribution of $\eta_1(\cdot):=\xi_{\tau_1}(\cdot + W_{V_1})$ given $(\tau_1, W_{[0,\tau_1]})$
is stochastically larger 
than $\bar\nu_{\lambda}$. Moreover, 
$(\eta_1(x))_{x < W_{V_1}}$ is independent of $W_{\tau_1}-W_{V_1}$,
and so we need only verify that the distribution of 
$\eta_2:=\xi_{\tau_1}(\cdot+W_{\tau_1}) = \theta_{W_{\tau_1}-W_{V_1}} \eta_1$ is
stochastically larger than $\bar \nu_{\lambda}$ for each possible outcome of $W_{\tau_1}-W_{V_1} \in \{0,\pm 1\}$.
Since, by the definitions of $V_1$ and $\tau_1$, $W_{\tau_1} \neq W_{V_1}$ if and only if $\xi_{\tau_1}(W_{V_1})=1$,
the possible choices for $\eta_2$ are: $\eta_1$ if $W_{\tau_1} = W_{V_1}$,
$\theta_{-1} \eta_1$ if $W_{\tau_1} = W_{V_1} - 1$, or $\theta_1 \eta_1$ if $W_{\tau_1} = W_{V_1}$.
In the latter case, $\eta_2(-1)=1$, and therefore all three possibilities are indeed stochastically larger than
$\bar \nu_{\lambda}$ as claimed.
\end{proof}

By Lemma~\ref{claim:stdomafterV}, there exists a configuration $\xi^{(2)}_0$ distributed as 
$\bar\nu_{\lambda}$, independent of $(\tau_1, W_{[0,\tau_1]})$ and smaller 
than $\xi^{(1)}_{\tau_1}(\cdot + W_{\tau_1})$. We may now define $\xi^{(2)}$ by using 
the events of the graphical representation that lie above time $\tau_1$ with the 
origin shifted to $W_{\tau_1}$, using $\xi^{(2)}_0$ as starting configuration. We may 
then define $W^{(2)}$ and $\tau_2$ from $\xi^{(2)}$, $(N_{t+\tau_1}-N_{\tau_1})_{t \ge 0}$ 
and $(U_k)_{k > N_{\tau_1}}$. With this coupling, clearly $W^{(2)}_t \le W^{(1)}_{\tau_1+t}
-W^{(1)}_{\tau_1}$ for all $t \geq 0$. Furthermore, since $\xi^{(2)}_0$ is independent of 
$(\tau_1,W_{[0,\tau_1]})$, the distribution of $\xi^{(2)}_{\tau_2}(\cdot+W^{(2)}_{\tau_2})$ 
given $(W^{(i)}_{[0, \tau_i]}, \tau_i)_{i=1,2}$ depends only on the random variables with 
$i=2$ and hence, by Lemma~\ref{claim:stdomafterV}, is again stochastically larger than 
$\bar\nu_\lambda$.

We may therefore repeat the argument. More precisely, suppose by induction that we have 
defined $\xi^{(k)}$, $W^{(k)}$ and $\tau_k$ for $k=1,\ldots,n$ and $n \ge 2$, in such a way 
that:
\begin{enumerate}
\item[(B1)] 
$W^{(k+1)}_t \le W^{(k)}_{\tau_n+t} - W^{(n)}_{\tau_n}$ for all $t \ge 0$ and 
$k=1\ldots n-1$;
\item[(B2)] 
$(W^{(k)}, \tau_k)$ is distributed as $\left( W, \tau_1 \right)$ under 
$\mathbb{P}_{\bar \nu_\lambda}$ for all $k=1,\ldots,n$;
\item[(B3)] 
$(W^{(k)}_{[0,\tau_k]}, \tau_k)_{k=1}^{n}$ is i.i.d.;
\item[(B4)] 
The law of $\xi^{(n)}(\cdot + W^{(n)}_{\tau_n})$ given $(W^{(k)}_{[0, \tau_k]},
\tau_k)_{k=1}^n$ is stochastically larger than $\bar \nu_\lambda$.
\end{enumerate}
Then we proceed as before: there exists a configuration $\xi^{(n+1)}_0$ distributed as 
$\bar \nu_\lambda$, smaller than $\xi^{(n)}(\cdot + W^{(n)}_{\tau_n})$ 
and independent of $(W^{(k)}_{[0, \tau_k]},\tau_k )_{k=1}^n$, from which we obtain 
$\xi^{(n+1)}$, $W^{(n+1)}$ and $\tau_{n+1}$, and we prove (B1)--(B4) as in the case 
$n=2$. This settles the existence of the sequence $(W^{(n)}, \tau_n)_{n\in\N}$. All 
that is left to show is that $\hat v(\lambda) > v_0$ and $\lim_{\lambda\to\infty} 
\hat v(\lambda)=v_1$.

Note that Lemma~\ref{lemma:speedvsdensity} is valid also for $\hat W$, and write 
$\hat N^1_t$ to denote the number of jumps that $\hat W$ takes on infected sites. 
Then $\hat N^1_{T_n}$ has distribution $\mathrm{BINOM}(n,\gamma/(1+\gamma))$, and 
by standard arguments we obtain
\begin{equation}
\label{convhatN}
\lim_{t \to \infty} t^{-1}\hat N^1_t 
= \frac{\gamma}{(1+\gamma)\mathbb{E}_{\bar\nu_\lambda}[\tau_1]} > 0,
\end{equation}
which proves $\hat{v}(\lambda) > v_0$. Furthermore, we claim that $\lim_{\lambda\to
\infty} \mathbb{E}_{\bar\nu_\lambda}[V_1]=0$. Indeed, $V_1$ is nonincreasing in 
$\lambda$ and, since $\lim_{\lambda \to \infty}\rho_\lambda = 1$ (recall 
Section~\ref{sec:model}), it is not hard to see that $V_1$ converges in probability 
to zero as $\lambda \to \infty$. Therefore $\lim_{\lambda \to \infty} \mathbb{E}_{
\bar \nu_\lambda}[\tau_1] = 1/(1+\gamma)$, and so $\lim_{\lambda \to \infty} 
\hat{v}(\lambda) = v_1$.
\end{proof}


\section{Regeneration, functional CLT and LDP}
\label{sec:regeneration}

The proof of Theorem~\ref{LLNregen} depends on the construction of \emph{regeneration 
times}, i.e., times at which the random walk forgets its past. This construction 
will be carried out in Section~\ref{subsec:regenconst} and is based on two propositions 
(Propositions~\ref{thm:tauregtime}--\ref{thm:momregtau} below), which are proved in
Sections~\ref{subsec:proofreg1}--\ref{subsec:proofreg2}. At the end of 
Section~\ref{subsec:regenconst} we will see that these propositions imply 
Theorem~\ref{LLNregen}(a,c). The proof of Theorem~\ref{LLNregen}(b) is deferred to
Section~\ref{subsec:speedcont}.


\subsection{Regeneration times}
\label{subsec:regenconst}

If the infection propagation speed $\iota=\iota(\lambda)$ is larger than $|v_0|\vee|v_1|$, 
the maximum absolute speed at which the random walk can move, then each time $W$ finds 
itself on an infected site it can become ``trapped'' forever in an infection cluster 
generated by this site alone. In that case, by Lemma~\ref{depinclusters}, the future 
increments of $W$ become independent of its past. The issue is therefore to find enough 
moments when $W$ sits on an infection. This can be dealt with in a way similar to what 
was done in the proof of $v(\lambda) > v_0$ in Section~\ref{subsec:proofvlarge}. 

\paragraph{Hitting, failure and trial times.}
In order to build the regeneration structure, we first need to extend some definitions 
related to clusters and right-most infections. For $s \ge t$ and $x \in \Z$, let
\begin{equation}
\label{extdefC}
C_{t,s}(x) := \big\{y \in \Z \colon\, (x,t) \leftrightarrow (y,s)\big\}
\end{equation}
and
\begin{equation}
\label{extdefRLx}
R_{t,s}(x) := \sup C_{t,s}(x), \qquad L_{t,s}(x) := \inf C_{t,s}(x).
\end{equation} 
Furthermore, let
\begin{equation}\label{extdefRMinf}
r_{t,s}(x) := \sup_{\substack{y < x \\ \xi_t(y) =1}} R_{t,s}(y),
\end{equation}
i.e., $r_{t,s}(x)$ is the right-most infection at time $s$ that comes from $ \Z_{\le x-1} \times 
\{t\}$.

For $t\ge0$ and $z \in \Z$, let
\begin{equation}
\label{defregV}
V_t(z) : = \inf \big\{ s > t \colon\, W_s = r_{t,s}(z) \big\}
\end{equation}
be the first time after time $t$ at which $W$ meets the right-most infection coming from 
$\Z_{\le z-1} \times \{t\}$. We will call this the \emph{z-wave hitting time} after $t$. It is 
not hard to see that $V_t(z) < \infty$ $\mathbb{P}_{\nu_\lambda}$-a.s.\ for any $t$ and 
$z \le W_t$. Indeed, at any time $t$ there is an infected site $x < z$ whose infection 
survives forever, and in this case $\lim_{s \to \infty}s^{-1}R_{t,s}(x) = \iota > |v_0|
\vee|v_1|$. Therefore there must be an $s>t$ for which $R_{t,s}(x) = W_s$. By 
right-continuity, $\mathbb{P}_{\nu_\lambda}(V_t(z) < \infty \; \forall \; z \le W_t, t 
\ge 0)=1$ as well.

Now define the first \emph{failure time} after time $t$ by (see 
Fig.~\ref{fig:failtrialtimes})
\begin{equation}
\label{defregF}
F_t := \inf\big\{s > t \colon\, W_s \notin [L_{t,s}(W_t), R_{t,s}(W_t)]\big\},
\end{equation}
i.e., the first time after time $t$ when $W$ exits the region surrounded by the cluster of 
$(W_t,t)$. To keep track of the space-time region on which the failure time depends, 
define, for $t \ge 0$ and $x \in \Z$,
\begin{equation}
\label{defregY}
\left(Y_{t,s}(x)\right)_{s \ge t}
\end{equation}
as the process with values in $\Z$ that starts at time $t$ at site $x$ and jumps down by 
following the infection arrows to the left in the graphical representation (see 
Fig.~\ref{LeftPoisson}). Then, given $\mathcal{G}_t$, $(x -Y_{t,t+s}(x))_{s \ge 0}$
is a Poisson process with rate $\lambda$. 

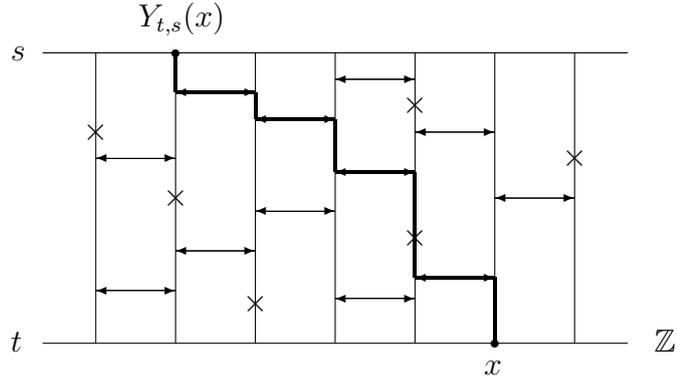
\begin{figure}[hbtp]
\vspace{1cm}
\begin{center}
\setlength{\unitlength}{0.35cm}
\begin{picture}(20,10)(0,0)
\put(0,0){\line(22,0){22}} 
\put(0,11){\line(22,0){22}}
\put(2,0){\line(0,1){11}}
\put(5,0){\line(0,1){11}}
\put(8,0){\line(0,1){11}}
\put(11,0){\line(0,1){11}}
\put(14,0){\line(0,1){11}}
\put(17,0){\line(0,1){11}}
\put(20,0){\line(0,1){11}}
\put(2,2){\vector(1,0){3}} \put(5,2){\vector(-1,0){3}}
\put(2,7){\vector(1,0){3}} \put(5,7){\vector(-1,0){3}}
\put(5,3.5){\vector(1,0){3}} \put(8,3.5){\vector(-1,0){3}}
\put(5,9.5){\vector(1,0){3}} \put(8,9.5){\vector(-1,0){3}}
\put(8,5){\vector(1,0){3}} \put(11,5){\vector(-1,0){3}}
\put(8,8.5){\vector(1,0){3}} \put(11,8.5){\vector(-1,0){3}}
\put(11,1.7){\vector(1,0){3}} \put(14,1.7){\vector(-1,0){3}}
\put(11,6.5){\vector(1,0){3}} \put(14,6.5){\vector(-1,0){3}}
\put(11,10){\vector(1,0){3}} \put(14,10){\vector(-1,0){3}}
\put(14,2.5){\vector(1,0){3}} \put(17,2.5){\vector(-1,0){3}}
\put(14,8){\vector(1,0){3}} \put(17,8){\vector(-1,0){3}}
\put(17,5.5){\vector(1,0){3}} \put(20,5.5){\vector(-1,0){3}}
\put(1.53,7.7){$\times$}
\put(4.53,5.2){$\times$}
\put(7.53,1.2){$\times$}
\put(13.53,8.7){$\times$}
\put(13.53,3.7){$\times$}
\put(19.53,6.7){$\times$}
{\linethickness{0.04cm}
\put(17,0){\line(0,1){2.5}}
\put(17,2.5){\line(-1,0){3}}
\put(14,2.5){\line(0,1){4}}
\put(14,6.5){\line(-1,0){3}}
\put(11,6.5){\line(0,1){2}}
\put(11,8.5){\line(-1,0){3}}
\put(8,8.5){\line(0,1){1}}
\put(8,9.5){\line(-1,0){3}}
\put(5,9.5){\line(0,1){1.5}}
}
\put(-1.2,-.3){$t$}
\put(-1.2,10.7){$s$}
\put(23,-0.3){$\mathbb{Z}$}
\put(16.6,-1.2){$x$}
\put(3.6,12){$Y_{t,s}(x)$}
\put(17,0){\circle*{.35}}
\put(5,11){\circle*{.35}}
\end{picture}
\end{center}
\caption{\small $Y_{t,s}(x)$ starts at $x$ and goes upwards and to the left across 
the arrows of the graphical representation.} 
\label{LeftPoisson}
\end{figure}

With the above observations we can define the \emph{trial time} after a failure 
time (see Fig.~\ref{fig:failtrialtimes}):
\begin{equation}
\label{defregT}
T_t := \left\{
\begin{array}{ll}
\infty & \text{if } F_t = \infty, \\
V_{F_t}(Y_{t,F_t}(W_t)) & \text{otherwise,}
\end{array}\right.
\end{equation}
i.e., $T_t$ is the $Y_{t,F_t}(W_t)$-wave time after time $F_t$ when the latter is 
finite. This wave ensures ``good conditions'' at the trial time, meaning an appreciable 
density of infections to the left of $W$.

\begin{figure}[htb]
\vspace{0.75cm}
\begin{center}
\begin{picture}(250,100)
\put(30,0){\includegraphics[height=100pt,width=200pt]{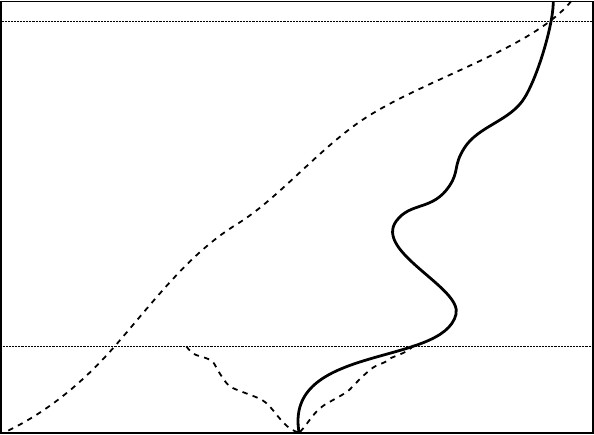}}
\put(30,0){\vector(1,0){220}}
\put(30,0){\vector(0,1){110}}
\put(2,107){\small\mbox{time}}
\put(255,-10){\small\mbox{space}}
\put(125,-12){$W_t$}
\put(12,92){$T_t$}
\put(12,18){$F_t$}
\put(17,-2){$t$}
\vspace{0.5cm}
\end{picture}
\end{center}
\caption{\small A failure time $F_t$ and a trial time $T_t$ after time $t$. The dashed 
lines represent infection paths. The thick line represents the path of $W$.} 
\label{fig:failtrialtimes}
\end{figure}

\paragraph{Regeneration times.}
We can now define our regeneration time $\tau$. First let
\begin{equation}
\label{defregcurlyT1}
\mathcal{T}_1 := V_0(0)
\end{equation}
and, under the assumption that $\mathcal{T}_1,\ldots,\mathcal{T}_k$, $k \in \N$, are 
all defined, let
\begin{equation}
\label{defregcurlyTk}
\mathcal{T}_{k+1} := \left\{
\begin{array}{ll}
\infty & \text{if } \mathcal{T}_k = \infty, \\
T_{\mathcal{T}_k} & \text{otherwise.}
\end{array}\right.
\end{equation}
Note that the $\mathcal{T}_k$'s are stopping times w.r.t.\ the filtration $\mathcal{G}$. 
Finally, put
\begin{equation}
\label{defregK}
K := \inf \big\{k \in \N \colon\, \mathcal{T}_k < \infty, \mathcal{T}_{k+1}=\infty\big\},
\end{equation}
and let
\begin{equation}
\label{defregtau}
\tau := \mathcal{T}_K.
\end{equation}
Note that $K < \infty$ a.s.\ since, at any trial time, the probability for the next 
failure time to be infinite is uniformly bounded from below. We will prove in
Sections~\ref{subsec:proofreg1}--\ref{subsec:proofreg2} that $\tau$ is a regeneration 
time and has exponential moments. This is stated in the following two propositions. 

\begin{proposition}
\label{thm:tauregtime}
The distribution of $(W_{t+\tau}-W_\tau)_{t \ge 0}$ under both $\mathbb{P}_{\nu_\lambda}
(\cdot \mid \tau, W_{[0,\tau]})$ and $\mathbb{P}_{\nu_\lambda}(\cdot \mid \Gamma, \tau, 
W_{[0,\tau]})$ is the same as that of $W$ under $\mathbb{P}_{\nu_\lambda}(\cdot \mid 
\Gamma)$, where
\begin{equation}
\label{defGamma} 
\Gamma := \{\xi_0(0)=1, F_0 = \infty \}.
\end{equation}
\end{proposition}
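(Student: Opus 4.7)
The plan is to get around the fact that $\tau=\mathcal{T}_K$ is not a $\mathcal{G}$-stopping time (since $K$ is detected only after observing $F_{\mathcal{T}_K}=\infty$) by decomposing according to the value of $K$: for any bounded measurable $f,g$,
\begin{multline*}
\mathbb{E}_{\nu_\lambda}\bigl[g(\tau,W_{[0,\tau]})\,f\bigl((W_{\tau+t}-W_\tau)_{t\ge 0}\bigr)\bigr] \\
= \sum_{k\ge 1}\mathbb{E}_{\nu_\lambda}\Bigl[g(\mathcal{T}_k,W_{[0,\mathcal{T}_k]})\,\mathbbm{1}_{\{\mathcal{T}_k<\infty,\,F_{\mathcal{T}_k}=\infty\}}\,f\bigl((W_{\mathcal{T}_k+t}-W_{\mathcal{T}_k})_{t\ge 0}\bigr)\Bigr],
\end{multline*}
and to apply strong Markov at each genuine stopping time $\mathcal{T}_k$ to evaluate the $k$th summand.

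The identification at $\mathcal{T}_k$ rests on two properties. First, by the definition of a wave hitting time, $\xi_{\mathcal{T}_k}(W_{\mathcal{T}_k})=1$ on $\{\mathcal{T}_k<\infty\}$. Second, on $\{F_{\mathcal{T}_k}=\infty\}$ the walk remains in $[L_{\mathcal{T}_k,s}(W_{\mathcal{T}_k}),R_{\mathcal{T}_k,s}(W_{\mathcal{T}_k})]$ for all $s\ge\mathcal{T}_k$, so Lemma~\ref{depinclusters} (applied with $x=W_{\mathcal{T}_k}$) tells us that every visited state $\xi_s(W_s)$ is a deterministic function of the single datum $\xi_{\mathcal{T}_k}(W_{\mathcal{T}_k})=1$ together with the graphical representation on $[\mathcal{T}_k,\infty)$. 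Hence $\mathbbm{1}_{\{F_{\mathcal{T}_k}=\infty\}}$ and the future increments $(W_{\mathcal{T}_k+t}-W_{\mathcal{T}_k})_{t\ge 0}$ on that event are both measurable with respect to the post-$\mathcal{T}_k$ graphical representation, the shifted Poisson clock $(N_{\mathcal{T}_k+t}-N_{\mathcal{T}_k})_{t\ge 0}$, and the shifted uniform decisions $(U_{j+N_{\mathcal{T}_k}})_{j\ge 1}$ alone. Strong Markov at $\mathcal{T}_k$ then makes this bundle independent of $\mathcal{G}_{\mathcal{T}_k}$ and, after space-time translation, identifies its joint law with that of $(\mathbbm{1}_{\{F_0=\infty\}},W)$ under $\mathbb{P}_\eta$ for any $\eta$ with $\eta(0)=1$; reapplying Lemma~\ref{depinclusters} at time $0$ shows this joint law is the same for every such $\eta$, so the inner conditional expectation on $\{\mathcal{T}_k<\infty\}$ equals $c\cdot\mathbb{E}_{\nu_\lambda}[f(W)\mid\Gamma]$ with $c=\mathbb{P}_{\nu_\lambda}(\Gamma\mid\xi_0(0)=1)$, a constant not depending on $k$ or the past.

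Substituting and summing, each summand factors cleanly; setting $f\equiv 1$ recovers $\mathbb{E}_{\nu_\lambda}[g(\tau,W_{[0,\tau]})]$ with the same constant $c$, and the desired independence identity drops out. For the further conditioning on $\Gamma$, note that on $\{K=k\}$ the event $\Gamma$ coincides with the $\mathcal{G}_{\mathcal{T}_k}$-measurable conjunction $\{\xi_0(0)=1,\,F_0>\mathcal{T}_k,\,W_{\mathcal{T}_k}\in C_{0,\mathcal{T}_k}(0)\}$: indeed, cluster containment $C_{\mathcal{T}_k,s}(W_{\mathcal{T}_k})\subseteq C_{0,s}(0)$ combined with $F_{\mathcal{T}_k}=\infty$ automatically forces $F_0=\infty$. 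Thus $\mathbbm{1}_\Gamma$ absorbs into $g(\tau,W_{[0,\tau]})$ on each piece of the decomposition and the same argument applies. The main obstacle is precisely the Lemma~\ref{depinclusters}-based localization at the heart of the identification: it is this cluster-trapping that confines all random-walk-relevant environmental dependence to the post-$\mathcal{T}_k$ graphical representation, without which $\{F_{\mathcal{T}_k}=\infty\}$ would be entangled with the rest of $\xi_{\mathcal{T}_k}$ and the strong Markov identification would collapse.
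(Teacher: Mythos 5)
Your proof is correct and takes essentially the same route as the paper: the paper isolates the cluster-trapping identification as Lemma~\ref{lemma:distreg} (with exactly the argument you inline via Lemma~\ref{depinclusters}, strong Markov, and invariance in the initial $\eta$ with $\eta(0)=1$), and then performs the same decomposition over $\{K=k\}$ using stopping times $\mathcal{T}_k$, working with a $\sigma$-algebra $\mathcal{G}_\tau$ defined through the $\mathcal{G}_{\mathcal{T}_k}$. Your observation that $\Gamma\cap\{K=k\}$ coincides with a $\mathcal{G}_{\mathcal{T}_k}$-measurable event is precisely the paper's identity \eqref{gammainGtau}, and your constant $c=\mathbb{P}_{\nu_\lambda}(\Gamma\mid\xi_0(0)=1)$ agrees with the paper's $\mathbb{P}_{\mathbbm{1}_0}(\Gamma_0)$.
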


\begin{proposition}
\label{thm:momregtau}
$\tau$ and $|W_\tau|$ have exponential moments under both $\mathbb{P}_{\nu_\lambda}$ 
and $\mathbb{P}_{\nu_\lambda}(\cdot \mid \Gamma)$, uniformly in $\lambda \in 
[\lambda_-,\lambda_+]$ for any fixed $\lambda_-, \lambda_+ \in (\lambda_W, \infty)$.
\end{proposition}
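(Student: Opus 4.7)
The plan is to write $\tau = \mathcal{T}_1 + \sum_{k=1}^{K-1}(\mathcal{T}_{k+1} - \mathcal{T}_k)$, where on $\{K > k\}$ we have $\mathcal{T}_{k+1} - \mathcal{T}_k = (F_{\mathcal{T}_k} - \mathcal{T}_k) + (T_{\mathcal{T}_k} - F_{\mathcal{T}_k})$, and then to combine (i) a uniform geometric tail for $K$ with (ii) uniform conditional exponential moments on each increment. Fix $[\lambda_-, \lambda_+] \subset (\lambda_W, \infty)$ and, using continuity of $\iota(\cdot)$, choose $\delta > 0$ with $\iota(\lambda) - 2\delta > |v_0| \vee |v_1|$ throughout.

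For the geometric tail of $K$, I would show that there exists $p_0 > 0$ (uniform in $k$ and $\lambda$) with $\mathbb{P}(F_{\mathcal{T}_k} = \infty \mid \mathcal{G}_{\mathcal{T}_k}) \ge p_0$. At $\mathcal{T}_k$ the walker sits on an infected site and, by the strong Markov property of the graphical representation, the cluster $C_{\mathcal{T}_k, \cdot}(W_{\mathcal{T}_k})$ behaves like a fresh single-infection cluster. With positive probability it survives with both edges growing asymptotically at speed at least $\iota - \delta$; on the further (conditionally positive probability) event that $|W_s - W_{\mathcal{T}_k}| \le (|v_0|\vee|v_1| + \delta)(s - \mathcal{T}_k)$ for all $s \ge \mathcal{T}_k$, the walker stays inside the cluster forever, so $F_{\mathcal{T}_k} = \infty$. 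Continuity in $\lambda$ of the ingredients yields uniform $p_0$, whence $\mathbb{P}(K > n) \le (1-p_0)^n$.

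For the size of a single increment, I would treat the wave parts and failure parts separately. A wave time such as $\mathcal{T}_1 = V_0(0)$ or $T_{\mathcal{T}_k} - F_{\mathcal{T}_k}$ is controlled via Lemma~\ref{lemma:momZdelta}: a generator of a wide-spread infection lies to the left at distance with uniform exponential moments, and its infection propagates rightwards at asymptotic speed at least $\iota - \delta > |v_0|\vee|v_1|$, while $|W_{t+s} - W_t|$ is dominated by the increment of a Poisson process of rate $\gamma$. A straightforward comparison argument then gives uniform exponential moments for the catch-up time. For $F_{\mathcal{T}_k} - \mathcal{T}_k$ on $\{F_{\mathcal{T}_k} < \infty\}$, I would split by whether the cluster $C_{\mathcal{T}_k, \cdot}(W_{\mathcal{T}_k})$ dies or survives. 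In the extinction case the extinction time has exponential tails by standard supercritical contact-process estimates (Liggett~\cite{Li85}, Chapter VI). In the survival case, escape of $W$ from $[L_{\mathcal{T}_k, s}, R_{\mathcal{T}_k, s}]$ by time $\mathcal{T}_k + t$ forces either a deviation of the edges $R,L$ from their $\pm \iota s$ asymptotics, or an excess walker displacement at rate at least $\iota - |v_0|\vee|v_1| > 0$; both are large-deviation events with exponential tails in $t$, uniformly in $\lambda$ on the compact.

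Combining these by iterated conditioning on $\mathcal{G}_{\mathcal{T}_k}$: for $a > 0$ small enough (uniformly in the compact),
\begin{equation*}
\mathbb{E}_{\nu_\lambda}\bigl[e^{a(\mathcal{T}_{k+1} - \mathcal{T}_k)} \mathbbm{1}_{\{K > k\}} \bigm| \mathcal{G}_{\mathcal{T}_k}\bigr] \le C(1-p_0)\, \mathbbm{1}_{\{K \ge k\}}
\end{equation*}
for a uniform constant $C < \infty$, and iterating yields $\mathbb{E}_{\nu_\lambda}[e^{a\tau}] < \infty$ provided $a$ is so small that $C(1-p_0) < 1$. For the spatial moment, $|W_t| \le N_t$ and $\exp(2aN_t - \gamma(e^{2a}-1)t)$ is a positive $\mathcal{G}$-martingale, so by optional stopping and Cauchy--Schwarz,
\begin{equation*}
\mathbb{E}[e^{a|W_\tau|}] \le \mathbb{E}[e^{aN_\tau}] \le \mathbb{E}\bigl[e^{2aN_\tau - \gamma(e^{2a}-1)\tau}\bigr]^{1/2} \mathbb{E}\bigl[e^{\gamma(e^{2a}-1)\tau}\bigr]^{1/2} \le \mathbb{E}\bigl[e^{\gamma(e^{2a}-1)\tau}\bigr]^{1/2},
\end{equation*}
finite for $a$ small. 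Conditioning on $\Gamma$ costs nothing since $\mathbb{P}_{\nu_\lambda}(\Gamma) > 0$ is uniformly bounded below on the compact (by continuity in $\lambda$ and positivity at each $\lambda$). The main obstacle is the uniform exponential tail on $F_{\mathcal{T}_k} - \mathcal{T}_k$ conditioned on $\{F_{\mathcal{T}_k} < \infty\}$: it demands careful large-deviation control of the contact-process edges in both the extinction and survival regimes, coupled to the walker's trajectory via the crucial speed gap $\iota(\lambda) > |v_0| \vee |v_1|$ available for $\lambda \in (\lambda_W, \infty)$.
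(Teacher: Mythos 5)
Your high-level architecture matches the paper's: geometric tail for $K$ plus uniform conditional exponential moments on the increments $\mathcal{T}_{k+1}-\mathcal{T}_k$, iterated. The ingredients you invoke for the increment estimate (Lemma~\ref{lemma:momZdelta}, contact-process edge large deviations, Poisson domination of $W$, the speed gap $\iota(\lambda)>|v_0|\vee|v_1|$) are exactly the ones the paper assembles in its Lemma~\ref{lemma:momT0}. The organizational difference -- you fold the failure probability $(1-p_0)$ into the per-step bound and ask for $C(1-p_0)<1$, whereas the paper keeps a per-step bound $1+\epsilon$ and combines with the geometric $K$ by Cauchy--Schwarz at the end -- is harmless; both closings work.

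There is, however, a genuine gap in how you set up the iterated conditioning. You condition on $\mathcal{G}_{\mathcal{T}_k}$, which in particular determines the \emph{entire} configuration $\xi_{\mathcal{T}_k}$. Under that conditioning, the increment $\mathcal{T}_{k+1}-\mathcal{T}_k$ is governed by whatever fixed configuration $\xi_{\mathcal{T}_k}$ happens to be, and there is no uniform $a>0$ for which $\mathbb{E}_\eta[\mathbbm{1}_{\{T_0<\infty\}}e^{aT_0}]$ is bounded over \emph{all} $\eta$: for a configuration with the nearest infection to the left of the origin very far away, the wave catch-up time $V_{F_0}(\cdot)$ has a mean that blows up, and the exponential moment for a fixed $a$ will fail. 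So the bound $\le C(1-p_0)$ cannot hold a.s.\ with deterministic $C$ after conditioning on the full $\mathcal{G}_{\mathcal{T}_k}$. What makes the paper's iteration close is a weaker conditioning plus a stochastic-domination statement: conditioning only on $(\mathcal{T}_{[1,n]},W_{[0,\mathcal{T}_n]})$, Lemma~\ref{lemma:densitytrial} shows (via the infection-path machinery of Lemma~\ref{lemma:stochdomleftpi}) that the conditional law of $\xi_{\mathcal{T}_n}(\cdot+W_{\mathcal{T}_n})$ is stochastically larger than $\bar\nu_\lambda$, and Lemma~\ref{lemma:momT0} is deliberately stated for initial laws $\mu\succeq\bar\nu_\lambda$ precisely so that this domination can be plugged in. Your proposal contains no analogue of this step: you never explain why, at the $k$-th trial time, the conditional environment seen by the walk is good enough (uniformly in $k$) to apply Lemma~\ref{lemma:momZdelta}. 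This is the crux of the uniformity in $k$, and it is where your argument, as written, breaks.

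A secondary but related imprecision is in your lower bound for $\mathbb{P}(F_{\mathcal{T}_k}=\infty\mid\mathcal{G}_{\mathcal{T}_k})$: you multiply "cluster survives with wide edges" by "walker stays in a narrower cone", but these two events live on the same graphical representation and are not independent. The paper decouples them by dominating $W$ between two homogeneous walks $X^0,X^1$ \emph{independent} of $\xi$, and it also upgrades the lower bound to an exact equality $=\kappa=\mathbb{P}_{\mathbbm{1}_0}(\Gamma_0)$ via Lemma~\ref{lemma:distreg} (which you allude to via "strong Markov property of the graphical representation", so this is more a matter of care than of a missing idea).
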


These two propositions imply the LLN and Theorem~\ref{LLNregen}(a), with
\begin{equation}
\label{formulasreg}
v(\lambda) = \frac{\mathbb{E}_{\nu_{\lambda}}
\left[W_{\tau}\mid \Gamma\right]}{\mathbb{E}_{\nu_{\lambda}}
\left[\tau\mid \Gamma\right]},
\qquad \sigma(\lambda)^2 = \frac{\mathbb{E}_{\nu_{\lambda}}
\left[(W_{\tau})^2\mid \Gamma\right] - \mathbb{E}_{\nu_{\lambda}}
\left[W_{\tau}\mid \Gamma\right]^2}{\mathbb{E}_{\nu_{\lambda}}
\left[\tau\mid \Gamma\right]}.
\end{equation}
They also imply that
\begin{equation}\label{expdecay}
\limsup_{t \to \infty} \frac{1}{t} \log \mathbb{P}_{\nu_\lambda}
\left(t^{-1}W_t \notin (v-\epsilon, v+ \epsilon) \right) < 0 
\quad \forall \; \epsilon > 0.
\end{equation}
For a proof of these facts, the reader can follow word-by-word the arguments 
given in Avena, dos Santos and V\"ollering~\cite{AvdSaVo12}, Theorem 3.8 and 
Section 4.1 (which do not require \eqref{condjumprates1}--\eqref{condjumprates2}).

Theorem~\ref{LLNregen}(c) follows from \eqref{expdecay} and the partial LDP 
proven in Avena, den Hollander and Redig \cite{AvdHoRe10} for attractive 
spin-flip systems (including the contact process). Here, partial means that the 
LDP is shown to hold outside a possible interval where the rate function is zero.
However, \eqref{expdecay} precisely precludes the presence of such an interval. 
(See Glynn and Whitt~\cite{GlWh94}, Theorem 3, for more details.)

The proof of Theorem~\ref{LLNregen}(b) is deferred to Section~\ref{subsec:speedcont}.

\begin{remark}
\label{rem:extensionsCLTLDP}
\emph{
It is easy to check that $\tau < \infty$ $\mathbb{P}_{\eta}$-a.s.\ whenever $\eta$ contains 
infinitely many infections. Hence the FCLT also holds under $\mathbb{P}_{\eta}$ for any 
such $\eta$. Moreover, the result in \cite{AvdHoRe10} concerning the partial LDP holds
whenever the initial measure $\mu$ has positive correlations. If additionally $\mu$ is 
stochastically larger than a non-trivial Bernoulli product measure, then it is possible to 
show that $\tau$ has exponential moments under $\mathbb{P}_{\mu}$, which implies that 
the LDP also holds under $\mathbb{P}_{\mu}$, with a possibly different rate function 
that however still has a unique zero at $v$. A proof that $\tau$ has exponential moments
in this situation does not follow directly from the proof of Theorem~\ref{thm:momregtau} given below, but 
can be obtained with the help of the method that was used in the proof of 
Lemma~\ref{lemma:momZdelta}.
}
\end{remark}


\subsection{Proof of Proposition~\ref{thm:tauregtime}}
\label{subsec:proofreg1}

We first show that the regeneration strategy indeed makes sense.

\begin{lemma}
\label{lemma:distreg}
For all $t \ge 0$,
\begin{equation}
\label{eq:distreg}
\mathbb{P}_{\nu_\lambda}\left(F_t = \infty, \left(W_{s+t}-W_t\right)_{s \ge 0} 
\in \cdot \mid \mathcal{G}_t \right) 
= \mathbb{P}_{\mathbbm{1}_0}\left(\Gamma_0, W \in \cdot~ \right) 
\text{ a.s. on } \{ \xi_t(W_t) =1 \},
\end{equation}
where $\Gamma_0 := \{F_0 = \infty\}$. The same is true for a finite stopping time w.r.t.\
$\mathcal{G}$ replacing $t$.
\end{lemma}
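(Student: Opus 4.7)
The plan is to reduce the lemma to a direct application of the strong Markov property of the graphical representation together with Lemma~\ref{depinclusters}. I would first introduce shifted data: set
\begin{equation}
\widetilde{H}_s(x) := H_{t+s}(x+W_t)-H_t(x+W_t), \quad \widetilde{I}_s(x):= I_{t+s}(x+W_t)-I_t(x+W_t),
\end{equation}
together with $\widetilde{N}_s := N_{t+s}-N_t$ and $\widetilde{U}_k := U_{k+N_t}$. Conditionally on $\mathcal{G}_t$, the quadruple $(\widetilde{H},\widetilde{I},\widetilde{N},\widetilde{U})$ is independent of $\mathcal{G}_t$ and has the same law as $(H,I,N,U)$. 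Let $\widetilde{\xi}$ denote the contact process built from $\widetilde{H},\widetilde{I}$ with initial configuration $\mathbbm{1}_0$, and $\widetilde{W} := W(\widetilde{\xi},\widetilde{N},\widetilde{U})$.

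Next I would identify $(W_{t+s}-W_t)_{s\geq 0}$ with $\widetilde{W}$ on the good event. On $\{\xi_t(W_t)=1\}\cap\{F_t=\infty\}$, the walker satisfies $W_{t+s}-W_t\in[\widetilde{L}_s(0),\widetilde{R}_s(0)]$ for every $s\ge 0$, where $\widetilde{L}_s(0),\widetilde{R}_s(0)$ are the extremes of the cluster of the origin in $\widetilde\xi$. By Lemma~\ref{depinclusters} applied with $\eta=\xi_t$ (which satisfies $\eta(W_t)=1$), the quantity $\xi_{t+s}(W_{t+s})$ depends on $\xi_t$ only through the value $\xi_t(W_t)=1$, and in fact
\begin{equation}
\xi_{t+s}(W_{t+s}) = \widetilde{\xi}_s(W_{t+s}-W_t).
\end{equation}
By induction on the jump epochs of $\widetilde{N}$ in the recursion \eqref{defwalkskeleton}, this forces $W_{t+s}-W_t=\widetilde{W}_s$ for all $s\ge 0$. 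Moreover, the event $\{F_t=\infty\}$ coincides with the event $\{\widetilde{F}_0=\infty\}=\Gamma_0$ defined from $(\widetilde\xi,\widetilde W)$, since the cluster boundaries $L_{t,t+s}(W_t)-W_t$ and $R_{t,t+s}(W_t)-W_t$ are exactly $\widetilde{L}_s(0)$ and $\widetilde{R}_s(0)$.

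Combining these observations, on $\{\xi_t(W_t)=1\}$ the joint event $\{F_t=\infty,(W_{t+s}-W_t)_{s\geq 0}\in\cdot\}$ coincides with the event $\{\widetilde F_0=\infty, \widetilde W\in\cdot\}$, which depends only on $(\widetilde H,\widetilde I,\widetilde N,\widetilde U)$. Since this latter tuple is independent of $\mathcal{G}_t$ and has the prescribed law, the conditional distribution under $\mathbb{P}_{\nu_\lambda}(\cdot\mid\mathcal{G}_t)$ equals $\mathbb{P}_{\mathbbm{1}_0}(\Gamma_0,W\in\cdot)$, which is the claim. The extension to a finite $\mathcal{G}$-stopping time $\mathcal{T}$ replacing $t$ is the standard strong Markov property argument, first handled for stopping times taking countably many values and then passed to general $\mathcal{T}$ by right-continuity of $\xi$ and $W$.

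\textbf{Main obstacle.} The delicate point is the second step: making rigorous that, on $\{F_t=\infty\}$, the evolution of $W$ after time $t$ is truly a deterministic function of the post-$t$ graphical data and the single infection at $(W_t,t)$, despite the fact that $\xi_{t+s}$ in general depends on the whole initial configuration. Everything hinges on applying Lemma~\ref{depinclusters} at every time $s$, which in turn requires verifying by induction that $W_{t+s}-W_t$ has not yet exited the cluster interval---and this is guaranteed precisely by the definition $F_t=\infty$. Handling this simultaneously for all $s$ (not just deterministic times) is where the right-continuity of the processes has to be invoked carefully.
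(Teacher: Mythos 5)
Your proof is correct and takes essentially the same approach as the paper: both arguments rest on Lemma~\ref{depinclusters} together with the (strong) Markov property of the graphical construction. The paper factors the argument slightly differently — it first observes that $\eta\mapsto\mathbb{P}_{\eta}(\Gamma_0,\, W\in\cdot)$ is constant over $\{\eta:\eta(0)=1\}$ (by Lemma~\ref{depinclusters}, since on $\Gamma_0$ the walk only ever reads states inside $[L_s(0),R_s(0)]$, and $\Gamma_0$ itself does not depend on $\xi_0$), and then applies the Markov property to reduce the conditional law to $\mathbb{P}_{\hat\xi_t}(\Gamma_0,\,W\in\cdot)$ with $\hat\xi_t(\cdot):=\xi_t(\cdot+W_t)$ — whereas you unpack that Markov step by explicitly constructing the shifted data $(\widetilde H,\widetilde I,\widetilde N,\widetilde U)$ and coupling $(W_{t+\cdot}-W_t)$ to $\widetilde W$. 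These are the same idea in two presentations.

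One small remark on the exposition: when you assert that $\{F_t=\infty\}$ coincides with $\{\widetilde F_0=\infty\}$, the justification you give (equality of the cluster boundaries) is necessary but not by itself sufficient; you also need that $W_{t+s}-W_t$ and $\widetilde W_s$ agree up to and including the first exit time from the interval $[\widetilde L_s(0),\widetilde R_s(0)]$, regardless of whether $F_t$ is finite. This is exactly what your induction along jump epochs gives (the agreement holds at each jump time as long as the walker was inside the cluster just before, by Lemma~\ref{depinclusters}), so the argument is complete once you note that the induction runs until the first exit and that the exit happens simultaneously for both walks. Making that one sentence explicit would close the small gap between "agreement on $\{F_t=\infty\}$" and "equality of the two events".
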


\begin{proof}
First note that $\mathbb{P}_{\eta}(\Gamma_0, W \in \cdot) = \mathbb{P}_{\mathbbm{1}_0}
(\Gamma_0, W \in \cdot)$ for any $\eta$ with $\eta(0)=1$. This follows from 
Lemma~\ref{depinclusters} because, on $\Gamma_0$, $W$ depends on $\xi$ only through 
$\{\xi_t(x) \colon\, t \ge 0, x \in [L_t(0), R_t(0)]\}$, and $\Gamma_0$ does not depend 
on $\xi_0$. Now, letting $\hat \xi_t(\cdot) := \xi_t(\cdot + W_t)$, we can write
(recall \eqref{defregF})
\begin{multline}
\label{pdistregeq1}
\mathbb{P}_{\nu_\lambda} \Big(\xi_t(W_t) = 1, F_t = \infty, 
(W_{s+t}-W_t)_{s \ge 0} \in \cdot \, \Big| \, \mathcal{G}_t \Big) \\ 
= \mathbb{E}_{\nu_\lambda}\left[\xi_t(W_t) \mathbb{P}_{\hat \xi_t}(\Gamma_0, W \in \cdot~) 
\, \middle| \, \mathcal{G}_t \right] = \xi_t(W_t)\, \mathbb{P}_{\mathbbm{1}_0}(\Gamma_0, W \in \cdot~),
\end{multline}
where the first equality is justified by the Markov property and the translation invariance 
of the graphical representation. To extend the result to stopping times we can use the 
strong Markov property of $(\xi,W)$.
\end{proof}

With the help of Lemma~\ref{lemma:distreg} we are ready to prove 
Proposition~\ref{thm:tauregtime}.

\begin{proof}
We will closely follow the proof of Theorem 3.4 in \cite{AvdSaVo12}. Let $\mathcal{G}_{\tau}$ 
be the $\sigma$-algebra of all events $B$ such that, for all $n \in \N_0$, there exists 
a $B_n \in \mathcal{G}_{\mathcal{T}_n}$ such that $B \cap \{K=n\} = B_n \cap \{K=n\}$. 
Note that $\tau$ and $W_{[0,\tau]}$ are in $\mathcal{G}_{\tau}$.

In the following, we abreviate $W^{(t)} := \left(W_{s+t}-W_t\right)_{s \ge 0}$. Pick $f$ 
bounded and measurable, $B \in \mathcal{G}_{\tau}$, and write (recall \eqref{defregcurlyTk})
\begin{equation}
\label{exprels}
\begin{aligned}
&\E_{\nu_{\lambda}}\left[\mathbbm{1}_{B} f(W^{(\tau)})\right] 
= \sum_{n\in\N_0} \E_{\nu_{\lambda}}\left[\mathbbm{1}_{B_n} 
\mathbbm{1}_{\{K=n\}} f(W^{(\mathcal{T}_{n})}) \right] \\
&= \sum_{n\in\N_0} \E_{\nu_{\lambda}}\Big[\mathbbm{1}_{B_n} 
\mathbbm{1}_{\{\mathcal{T}_{n} < \infty\}}\, 
\E_{\nu_{\lambda}}\left[\mathbbm{1}_{\{F_{\mathcal{T}_{n}} = \infty\}} 
f(W^{(\mathcal{T}_{n})}) \,\middle|\, \mathcal{G}_{\mathcal{T}_{n}}\right] \Big].
\end{aligned}
\end{equation}
Since $\xi_{\mathcal{T}_n}(W_{\mathcal{T}_n})=1$ on $\{\mathcal{T}_n < \infty\}$, by 
Lemma~\ref{lemma:distreg} the last line of \eqref{exprels} equals
\begin{equation}
\begin{aligned}
&\E_{\mathbbm{1}_0}\left[ f(W) \mathbbm{1}_{\Gamma_0} \right] 
\sum_{n\in\N_0} \E_{\nu_{\lambda}}\left[\mathbbm{1}_{B_n} 
\mathbbm{1}_{\{\mathcal{T}_{n}<\infty\}} \right] \\
 &= \E_{\mathbbm{1}_0}\left[ f(W) \mid \Gamma_0 \right] 
\sum_{n\in\N_0} \E_{\nu_{\lambda}}\left[\mathbbm{1}_{B_n} 
\mathbbm{1}_{\{\mathcal{T}_{n}<\infty\}} \right] \P_{\mathbbm{1}_0}(\Gamma_0),
\end{aligned}
\end{equation}
which, again by Lemma~\ref{lemma:distreg}, equals
\begin{equation}
\label{compproptau}
\begin{aligned}
&\E_{\mathbbm{1}_0}\left[ f(W) \mid \Gamma_0 \right] 
\sum_{n\in\N_0} \E_{\nu_{\lambda}}\left[\mathbbm{1}_{B_n} 
\mathbbm{1}_{\{\mathcal{T}_{n}<\infty\}} \P_{\nu_{\lambda}}
\left(F_{\mathcal{T}_{n}} = \infty \,\middle|\, 
\mathcal{G}_{\mathcal{T}_{n}}\right) \right]\\
&= \E_{\mathbbm{1}_0}\left[ f(W) \mid \Gamma_0 \right] 
\sum_{n\in\N_0} \P_{\nu_{\lambda}}\left(B_n, K=n \right)\\
&= \E_{\mathbbm{1}_0}\left[ f(W) \mid \Gamma_0 \right] \P_{\nu_{\lambda}}(B)\\[0.2cm] 
&= \E_{\nu_{\lambda}}\left[ f(W) \mid \Gamma \right] \P_{\nu_{\lambda}}(B),
\end{aligned}
\end{equation}
where the last equality is, one more time, justified by Lemma~\ref{lemma:distreg}. This 
proves the claim under $\mathbb{P}_{\nu_\lambda}$. 

To extend the claim to $\mathbb{P}_{\nu_\lambda}(\cdot \mid \Gamma)$, note that 
$\Gamma \in \mathcal{G}_{\tau}$ since
\begin{equation}\label{gammainGtau}
\Gamma \cap \{K=n\} = \big\{\xi_0(0)=1, W_s \in [L_s(0), R_s(0)] \; 
\forall \; s \in [0, \mathcal{T}_n] \big\} \cap \{K=n\},
\end{equation}
and apply \eqref{compproptau} to $B \cap \Gamma$ instead of $B$.
\end{proof}


\subsection{Proof of Proposition~\ref{thm:momregtau}}
\label{subsec:proofreg2}

\paragraph{Exponential moments.}
We first show that $T_0$ has exponential moments when it is finite, uniformly 
for $\lambda$ in compact sets. Fix $\lambda_-, \lambda_+\in(\lambda_W,\infty)$,
$\lambda_- \le \lambda_+$.

\begin{lemma}
\label{lemma:momT0}
For every $\lambda \in [\lambda_-,\lambda_+]$ and $\epsilon > 0$ there exists an $a 
= a(\lambda_-,\lambda_+,\epsilon) > 0$ such that, for any probability measure $\mu$ 
stochastically larger than $\bar \nu_\lambda$,
\begin{equation}
\label{eqmomT0}
\begin{array}{rl}
\text{(a)} & \mathbb{E}_{\mu}\left[\mathbbm{1}_{\{T_0 < \infty \}} 
e^{a T_0}\right] \le 1+ \epsilon.\\
\text{(b)} & \mathbb{E}_{\mu}\left[e^{a V_0(0)}\right] \le 1+ \epsilon.
\end{array}
\end{equation}
\end{lemma}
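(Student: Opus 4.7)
The plan is to prove (b) first, leveraging that $\lambda > \lambda_W$ means $\iota(\lambda) > |v_0|\vee|v_1|$, so infection waves strictly outpace $W$; then (a) follows by restarting the argument at the stopping time $F_0$ with the help of Lemma~\ref{lemma:stochdomleftpi}.

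For (b), I would fix $\delta>0$ small enough that $c := (\iota-\delta) - (v_1\vee|v_0| + \delta) > 0$, which is possible uniformly in $\lambda\in[\lambda_-,\lambda_+]$ because $\lambda\mapsto\iota(\lambda)$ is continuous and strictly positive. Since $\mu\succeq\bar\nu_\lambda$ and $Z_\delta$ is monotone in $\xi_0$, Lemma~\ref{lemma:momZdelta} gives a uniform exponential moment for the random site $x^\ast := Z_\delta(0)$. By the definition of the wedge $\mathcal{W}^\delta_{x^\ast}$ and the fact that $C^\delta_s(x^\ast)\neq\emptyset$ for all $s\ge0$, we have
\begin{equation}
r_{0,s}(0) \;\ge\; R_{0,s}(x^\ast) \;>\; x^\ast + (\iota-\delta)s - 1, \qquad s\ge0.
\end{equation}
On the other hand, by the monotonicity \eqref{monotwalk}, $W_s$ is dominated by the compound-Poisson walk $W(\overline{\mathbf{1}},N,U)_s$ with jump rate $\gamma$ and drift $v_1$. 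Standard Cram\'er bounds give a uniform exponential moment for the random time $S := \sup\{s\ge0\colon W_s > (v_1+\delta)s\}$. For $s \ge S \vee (|x^\ast|+1)/c$, we have $r_{0,s}(0) > W_s$, and since both processes are integer-valued and the up-jumps of $r_{0,s}(0)$ are of size $+1$ (while $W$ has unit jumps), the nearest-neighbor structure of the graphical representation forces $V_0(0) \le S \vee (|x^\ast|+1)/c + \Theta$, where $\Theta$ is a geometrically bounded additional wait coming from the possibility that $W$ jumps away right as the wave arrives. Thus $V_0(0)$ has a uniform exponential moment, and choosing $a$ small makes $\mathbb{E}_\mu[e^{aV_0(0)}]\le 1+\epsilon$.

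For (a), I would observe that on $\{T_0<\infty\}$ we have $F_0<\infty$, which (since the cluster $[L_{0,s}(W_0),R_{0,s}(W_0)]$ grows at speed $\iota > v_1$ when non-empty) forces $C_{0,s}(W_0)$ to die out by time $F_0$; conditional on extinction, $F_0$ has a uniform exponential moment by the classical exponential-decay results for the subcritical/dying regime of the contact process used in Lemma~\ref{lemma:momZdelta}. Given $\mathcal{G}_{F_0}$, the site $Y_{0,F_0}(W_0)$ satisfies $W_0 - Y_{0,F_0}(W_0)\sim\mathrm{Poisson}(\lambda F_0)$, contributing at most an extra exponential factor in $F_0$. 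The key input is that, conditionally on $\mathcal{G}_{F_0}$ and on $\{F_0<\infty\}$, the law of $\xi_{F_0}$ restricted to $\Z_{< W_{F_0}}$ is stochastically larger than a shift of $\bar\nu_\lambda$; this is supplied by Lemma~\ref{lemma:stochdomleftpi} (together with Remark~\ref{remark:stoptimes}) applied to the random infection path $\pi$ obtained by following the right-most infection backbone of $(W_{F_0},F_0)$ backwards in time to the initial slice (using that $\xi_{F_0}(W_{F_0}-1)=1$ on the event that $W$ exits the cluster through its left boundary, and symmetrically on the right). Given this domination, the argument of (b) applied at time $F_0$ with starting point $W_{F_0}$ and target wave emanating from sites $\le Y_{0,F_0}(W_0)-1$ yields a uniform exponential moment for $T_0-F_0$, and taking $a$ small gives $\mathbb{E}_\mu[\mathbbm{1}_{\{T_0<\infty\}}e^{aT_0}]\le 1+\epsilon$.

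The main obstacle is the construction of the random infection path $\pi$ in the restart step of (a). One must identify an $\mathcal{F}$-adapted, causal path staying weakly to the left of $W$ on $[0,F_0]$ with $\xi_0(\pi_0)=1$ and whose endpoint $\pi_{F_0}$ is close enough to $W_{F_0}$ that the stochastic-domination conclusion of Lemma~\ref{lemma:stochdomleftpi} can be transferred to $\xi_{F_0}$ as seen from $W_{F_0}$. A secondary subtlety, present already in (b), is the possibility that $W$ and $r_{0,s}(0)$ jump past one another rather than meeting exactly; this is handled by the observation that up-jumps of $r_{0,s}(0)$ are of size $+1$, so a brief local analysis of the graphical representation near the coincidence time absorbs the issue into the geometric factor $\Theta$.
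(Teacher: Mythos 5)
Your proof of part (b) is in the right spirit and close to the paper's (the paper reduces (b) to the machinery set up for (a) by ``taking $x_0=0$''): you locate a surviving wedge-confined infection $Z_\delta(0)$ via Lemma~\ref{lemma:momZdelta}, note that its right edge outpaces $W$ once $\lambda>\lambda_W$, and deduce exponential moments for the meeting time. That part is sound (your extra ``geometric factor $\Theta$'' is unnecessary: once $r_{0,\cdot}(0)$ is strictly above $W$, the unit-jump structure already forces an earlier time of equality, since the two processes a.s.\ never jump simultaneously).

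Part (a), however, contains two genuine gaps, and the second is structural.

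First, the assertion that $F_0<\infty$ forces $C_{0,\cdot}(W_0)$ to die out by time $F_0$ is false. The cluster boundaries $[L_{0,s}(W_0),R_{0,s}(W_0)]$ coincide with $(l_s,r_s)$ only \emph{asymptotically} grow linearly; at early times $W$ can step outside the cluster while it is still alive. The correct statement, which the paper proves, is that $F_0\mathbbm{1}_{\{F_0<\infty\}}$ is dominated by a random time $D_1$ (the last time either $W$, or the processes $r_s^{\lambda_-},l_s^{\lambda_-}$, fail to respect the cone of slope $m=\tfrac12(\iota(\lambda_-)+|v_0|\vee|v_1|)$), and $D_1$ has exponential moments; cluster death at a late time is excluded because $r_s^{\lambda_-}>ms>-ms>l_s^{\lambda_-}$ for $s>D_1$ already implies $r_s^{\lambda_-}>l_s^{\lambda_-}$.

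Second, and more seriously, your restart at time $F_0$ via Lemma~\ref{lemma:stochdomleftpi} does not go through, and the ``main obstacle'' you flag at the end is not a technicality but a real obstruction. Lemma~\ref{lemma:stochdomleftpi} needs a causal path $\pi$ with $\xi_0(\pi_0)=1$ such that the conditioning $\sigma$-algebra (here $\mathcal{G}_{F_0}$, which contains the exploration of the entire cluster of $(W_0,0)$ and in particular its left edge $L_{0,s}(W_0)$) is contained in $\mathcal{R}^\pi_{F_0}$, i.e.\ is measurable with respect to what lies weakly to the \emph{right} of $\pi$. Any $\pi$ ending near $W_{F_0}$ sits far to the right of $L_{0,s}(W_0)$ during the excursion, so the discovered information is not ``$\pi$-right-measurable.'' Moreover $\xi_0(W_0)$ need not be $1$, and when $F_0$ coincides with cluster extinction there is no nearby infection at all, so $\pi_{F_0}$ cannot be taken close to $W_{F_0}$. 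This is exactly why the paper does \emph{not} regain stochastic domination at the failure time; domination is regained only at the \emph{trial} time $T_0$ (Lemma~\ref{lemma:densitytrial}), precisely because the $Y$-process steps leftward past all of the explored region. For the present lemma the paper instead sandwiches the $\lambda$-system between $\lambda_-$ and $\lambda_+$ systems via the monotone graphical coupling and produces a single random time $D_0$ (the intersection of the $m$-cone through the origin with the $(\iota(\lambda_-)-\delta)$-line through $(Z^{\lambda_-}_\delta(x_0)-1,0)$, where $x_0 := Y^{\lambda_+}_{0,D_1}(0)-\lceil(\iota(\lambda_+)+\delta)D_1\rceil$) depending only on the $\lambda_\pm$ systems, and shows $T_0\mathbbm{1}_{\{T_0<\infty\}}\le D_0$; exponential moments of $D_0$ come from Lemma~\ref{lemma:momZdelta} and large-deviation bounds for $X^0,X^1$ and for $r^{\lambda_-},l^{\lambda_-}$. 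You need either to adopt such a comparison-to-endpoints strategy, or to find a different way to re-establish domination after $F_0$; the path you propose does not satisfy the hypotheses of Lemma~\ref{lemma:stochdomleftpi}.

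One more remark on uniformity: you invoke Lemma~\ref{lemma:momZdelta}, which as stated is uniform in $x$ but is proved for a single $\lambda$. The paper's sandwich between $\lambda_-$ and $\lambda_+$ is what delivers uniformity in $\lambda\in[\lambda_-,\lambda_+]$; be explicit about this if you keep the $\lambda$-dependent quantities in your bound.
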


\begin{proof}
We couple systems with infection rates $\lambda_-$, $\lambda$ and $\lambda_+$ starting, 
respectively, from $\bar \nu_{\lambda_-}$, $\mu$ and $\mathbf{1}$, by coupling their 
initial configurations and their infection events monotonically. Denote their joint 
law by $\mathbb{P}$. In what follows, we will refer to these systems by their rates 
and we will use a superscript to indicate on which system a random variable depends.

We will bound $T_0 \mathbbm{1}_{\{T_0 < \infty\}} = T_0 \mathbbm{1}_{\{F_0 < \infty\}}$ 
by a time $D_0$ that depends only on systems $\lambda_{\pm}$ and has exponential moments 
under $\mathbb{P}$. We start by bounding $F_0 \mathbbm{1}_{\{F_0 < \infty\}}$ by a 
variable $D_1$ depending only on system $\lambda_-$.
Let
\begin{equation}
\label{momT0eq1}
r_t := \sup_{x \in \Z_{\le 0}}R_t(x), \qquad l_t := \inf_{x \in \Z_{\ge 0}}L_t(x).
\end{equation}
Then $r_t$ is the same as $r_{0,t}(0)$ in \eqref{extdefRMinf} when all sites in 
$\Z_{\le 0}$ are infected, and analogously for $l_t$. Furthermore, $R_t(0), L_t(0)$ are 
equal to $r_t,l_t$ while $C_t(0)\neq\emptyset$: this can be seen by using the graphical
representation (see e.g.\ Liggett~\cite{Li85} Chapter VI, Theorem 2.2). Therefore
\begin{equation}
\label{momT0eq2}
F_0 = \inf\{ t \ge 0 \colon\, r_t < W_t \text{ or } l_t > W_t\}.
\end{equation}
Let $m:= \frac12(\iota(\lambda_-)+|v_0| \vee |v_1|)$. Take homogeneous random walks 
$X^{i}$ jumping at rates $\alpha_i$,$\beta_i$, $i\in\{0,1\}$, independent of $\xi$ and 
coupled to $W$ in such a way that $X^0_t \le W_t \le X^1_t$ for all $t \ge 0$. Set
\begin{equation}
\label{momT0eq3}
\begin{array}{rcl}
D_{1a} & := & \sup\big\{t \ge 0 \colon\, l^{\lambda_-}_t \ge -mt 
\text{ or } r^{\lambda_-}_t \le mt \big\},\\
D_{1b} & := & \sup\big\{t \ge 0 \colon\, |X^0_t| \vee |X^1_t| > mt\big\}.
\end{array}
\end{equation}
Then $D_{1a}$ depends only on system $\lambda_-$ and has exponential moments by known 
large deviation bounds for $r_t$ (see Liggett~\cite{Li85} Chapter VI, Corollary 3.22), 
while $D_{1b}$ is independent of $\xi$ and has exponential moments by standard large 
deviation bounds for $X^0$ and $X^1$. Noting that $r_t$ and $l_t$ are monotone, we can 
take $D_1 := D_{1a} \vee D_{1b}$, which does not depend on the initial configuration.

Set $\delta := \frac12(\iota(\lambda_-) - m)$, $x_0 := Y^{\lambda_+}_{0,D_1}(0)
-\lceil(\iota(\lambda_+)+\delta)D_1 \rceil$ and note, using the graphical representation, 
that $\Delta_0 := x_0 - Z^{\lambda_-}_{\delta}(x_0)$ is independent of $x_0$, where 
$Z_{\delta}(x)$ is as in \eqref{defZdelta}. Then
\begin{equation}
\label{momT0eq4}
D_0 := \frac{\Delta_0 + |x_0| + 1}{\iota(\lambda_-) - \delta - m} = 4 \frac{\Delta_0 + |x_0| + 1}{\iota(\lambda_-) - |v_0|\vee|v_1|}
\end{equation}
depends only on $\lambda_-$, $\lambda_+$ and has exponential moments under $\mathbb{P}$ by 
Lemma~\ref{lemma:momZdelta}. It is easy to check that $D_0$ is the intersection time of the 
line of inclination $\iota(\lambda_-) -\delta$ passing through $(Z^{\lambda_-}_{\delta}(x_0)
-1,0)$ and the line of inclination $m$ passing through the origin. Since the system $\lambda$ 
has more infections than the system $\lambda_-$ and $D_0 \ge D_1$, we have $T_0 \mathbbm{1}_{
\{T_0 < \infty\}} \le D_0$, which proves (a). For (b), we can bound $V_0(0)$ analogously, 
taking $x_0 = 0$ instead.
\end{proof}

\paragraph{Infections at trial times.}
We next show that at trial times there are more infections to the left of the random walk
than under $\bar \nu_\lambda$.

\begin{lemma}
\label{lemma:densitytrial}
For all $n \in \N$, on the event $\{\mathcal{T}_n < \infty\}$ the law of $\xi_{\mathcal{T}_n}
(~\cdot+W_{\mathcal{T}_n})$ under $\mathbb{P}_{\nu_\lambda}(\cdot \mid \mathcal{T}_{[1,n]}, 
W_{[0,\mathcal{T}_n]})$ a.s.\ is stochastically larger than $\bar \nu_\lambda$.
\end{lemma}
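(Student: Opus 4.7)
The plan is to prove the lemma by induction on $n$, at each step constructing a random infection path $\pi^{(n)}$ on $[0, \mathcal{T}_n]$ with $\pi^{(n)}_{\mathcal{T}_n} = W_{\mathcal{T}_n}$ that satisfies properties (p1)--(p2) of Section~\ref{sec:moreCP}, in the stopping-time version of Remark~\ref{remark:stoptimes} (with the filtration enlarged by $N_{[0, \mathcal{T}_n]}$ and $U_{[1, N_{\mathcal{T}_n}]}$). Lemma~\ref{lemma:stochdomleftpi} will then yield that, conditional on $\mathcal{R}^{\pi^{(n)}}_{\mathcal{T}_n}$, the law of $\xi_{\mathcal{T}_n}(\cdot + W_{\mathcal{T}_n} + 1)$ stochastically dominates $\bar\nu_\lambda$, which implies the claim since $\mathcal{T}_{[1, n]}$ and $W_{[0, \mathcal{T}_n]}$ are $\mathcal{R}^{\pi^{(n)}}_{\mathcal{T}_n}$-measurable by (p2) (the slightly stronger conclusion recovers the one in the statement by trivially dropping the site $W_{\mathcal{T}_n}$ itself, where $\xi_{\mathcal{T}_n} = 1$).

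For the base case $n=1$: by definition $W_{\mathcal{T}_1} = r_{0, \mathcal{T}_1}(0) = R_{0, \mathcal{T}_1}(y)$ for some $y \in \Z_{\le -1}$ with $\xi_0(y) = 1$, so take $\pi^{(1)}$ to be the rightmost infection path from $(y, 0)$ to $(W_{\mathcal{T}_1}, \mathcal{T}_1)$. One has $\pi^{(1)}_s \le R_{0,s}(y) \le r_{0, s}(0) < W_s$ for $s \in [0, \mathcal{T}_1)$ with equality at $\mathcal{T}_1$; property (p1) is immediate, and property (p2) follows from the standard causality of rightmost infection paths: on the event that such a path stays above a deterministic barrier $\varpi$, both $\pi^{(1)}$ itself and the adapted data $\mathcal{T}_1, W_{[0, \mathcal{T}_1]}$ are reconstructible from the graphical events to the right of $\varpi$.

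For the inductive step, given $\pi^{(n)}$, one extends it on $[\mathcal{T}_n, \mathcal{T}_{n+1}]$ by concatenating two pieces reflecting the two-step definition $\mathcal{T}_{n+1} = V_{F_{\mathcal{T}_n}}(Y_{\mathcal{T}_n, F_{\mathcal{T}_n}}(W_{\mathcal{T}_n}))$. On $[\mathcal{T}_n, F_{\mathcal{T}_n}]$, use an infection path tracing the left side of the cluster $C_{\mathcal{T}_n, \cdot}(W_{\mathcal{T}_n})$; on $[F_{\mathcal{T}_n}, \mathcal{T}_{n+1}]$, use the rightmost infection path from $\Z_{\le Y_{\mathcal{T}_n, F_{\mathcal{T}_n}}(W_{\mathcal{T}_n}) - 1} \times \{F_{\mathcal{T}_n}\}$ realizing the wave-hit at $\mathcal{T}_{n+1}$; bridge at $F_{\mathcal{T}_n}$ via a short infection path within $\xi_{F_{\mathcal{T}_n}}$ joining the two endpoints (such a bridge exists by using that, inductively, $\xi_{F_{\mathcal{T}_n}}$ is dense in infections to the left of the random walk).

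The main obstacle is verifying property (p2) for the concatenated path $\pi^{(n+1)}$: one must check that $F_{\mathcal{T}_n}$, the left-arrow process $Y_{\mathcal{T}_n, \cdot}(W_{\mathcal{T}_n})$, the wave-hitting time $\mathcal{T}_{n+1}$, and the trajectory $W_{[\mathcal{T}_n, \mathcal{T}_{n+1}]}$ are all $\mathcal{R}^{\pi^{(n+1)}}_{\mathcal{T}_{n+1}}$-measurable. This relies on the cluster (and hence $L, R, F$) lying to the right of its leftmost boundary, which is a sub-path of $\pi^{(n+1)}$; on the process $Y$ using only left-arrows emanating from sites in (or adjacent to) the cluster, hence again to the right of $\pi^{(n+1)}$; and on the rightmost-path causality on $[F_{\mathcal{T}_n}, \mathcal{T}_{n+1}]$. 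Once this bookkeeping is completed, Lemma~\ref{lemma:stochdomleftpi} closes the induction.
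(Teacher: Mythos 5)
Your overall strategy---build a random infection path $\pi$ ending at $(W_{\mathcal{T}_n},\mathcal{T}_n)$ with properties (p1)--(p2), then invoke Lemma~\ref{lemma:stochdomleftpi} via Remark~\ref{remark:stoptimes}---is the same as the paper's, and your base case $n=1$ matches the paper. However, your inductive step has a genuine gap: the concatenated object $\pi^{(n+1)}$ you describe is not an infection path, because the two pieces cannot be bridged at time $F_{\mathcal{T}_n}$. By construction $\mathcal{T}_{n+1}$ is a wave hitting time for a wave emanating from $\Z_{\le z-1}\times\{F_{\mathcal{T}_n}\}$ with $z = Y_{\mathcal{T}_n,F_{\mathcal{T}_n}}(W_{\mathcal{T}_n})$, and since $Y_{\mathcal{T}_n,\cdot}(W_{\mathcal{T}_n})$ crosses every left arrow, it always satisfies $Y_{\mathcal{T}_n,s}(W_{\mathcal{T}_n}) \le L_{\mathcal{T}_n,s}(W_{\mathcal{T}_n})$. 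Hence at time $F_{\mathcal{T}_n}$ the entire wave sits \emph{strictly to the left} of the cluster $C_{\mathcal{T}_n,\cdot}(W_{\mathcal{T}_n})$ (which may in fact already be empty, if failure occurs through extinction). An infection path must move sideways only across individual arrows, so there is no ``short infection path within $\xi_{F_{\mathcal{T}_n}}$'' connecting the cluster's left edge to the wave's source; density of infections does not imply contiguity. In addition, the trajectory $s\mapsto L_{\mathcal{T}_n,s}(W_{\mathcal{T}_n})$ is not a nearest-neighbor path (it can jump right by more than one step when the leftmost infection heals), so even the first piece of your extension is not a valid infection path in the sense of (p1).

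The paper avoids this by \emph{not} requiring $\pi$ to pass through the earlier points $(W_{\mathcal{T}_k},\mathcal{T}_k)$, $k<n$. Instead, at each trial time it builds a fresh path: one shows by induction that there exist infection paths connecting $(W_{\mathcal{T}_n},\mathcal{T}_n)$ all the way back to $\Z_{\le-1}\times\{0\}$ that stay strictly to the left of the $Y$-paths $Y^{\mathcal{T}_k}(W_{\mathcal{T}_k})$, $k=1,\dots,n-1$, and never to the right of $W$, and then takes $\pi$ to be the rightmost of these. Staying left of the $Y$-paths is exactly what guarantees that $(\mathcal{T}_{[1,n]},W_{[0,\mathcal{T}_n]})$ is measurable w.r.t.\ $\mathcal{R}^{\pi}_{\mathcal{T}_n}\vee\sigma(N_{[0,\mathcal{T}_n]},U_{[1,N_{\mathcal{T}_n}]})$: revealing the failure/trial structure only exposes graphical events to the right of the $Y$-paths, hence to the right of $\pi$. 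Your proposal correctly identifies that $Y$ is the relevant barrier, but the route you take to build $\pi$ (extending one path through every trial point) cannot work; the path has to be allowed to ``start over'' below each failure time.
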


\begin{proof}
Suppose that $n\ge 2$ (the case $n=1$ is simpler). Using the definition of $\mathcal{T}_n$, 
we can show by induction that, if $\mathcal{T}_n<\infty$, then there exist infection paths 
connecting $(W_{\mathcal{T}_n},\mathcal{T}_n)$ to $\Z_{\le -1} \times \{0\}$ and never 
touching the paths $Y^{\mathcal{T}_{k}}(W_{\mathcal{T}_{k}})$, $k=1,\ldots,n-1$, or the 
region to the right of $W$. Take $\pi$ to be the right-most of these infection paths. Then 
$\pi$ is a random infection path with properties (p1) and (p2), and 
\begin{equation}
(\mathcal{T}_{[1,n]},W_{[0,\mathcal{T}_n]}) \in \mathcal{R}^{\pi}_{\mathcal{T}_n} 
\vee \sigma\big(N_{[0,\mathcal{T}_n]}, U_{[1, N_{\mathcal{T}_n}]}\big).
\end{equation} 
Therefore the result follows from Lemma~\ref{lemma:stochdomleftpi}.
\end{proof}

\paragraph{Conclusion.}
We are now ready to prove Proposition~\ref{thm:momregtau}.

\begin{proof}
Let 
\begin{equation}\label{defkappa}
\kappa := \mathbb{P}_{\mathbbm{1}_0}(\Gamma_0).
\end{equation}
By Lemma \ref{lemma:distreg}, $\mathbb{P}_{\nu_\lambda}(\Gamma) = \kappa \rho_\lambda 
\ge \kappa \rho_{\lambda_-}$ by monotonicity (recall the definition of $\rho_\lambda$
from Section~\ref{sec:model}).  Also, there exists a $\kappa_- > 0$ such that $\kappa 
\ge \kappa_-$ for any $\lambda \ge \lambda_-$: we can take $\kappa_-$ to be the probability 
that $X^0$ and $X^1$ in the proof of Lemma~\ref{lemma:momT0} never cross $L(0)$ or 
$R(0)$ in system $\lambda_-$. Therefore it is enough to prove the claim for $\mathbb{P}_{
\nu_\lambda}$. Since $|W_t|$ is dominated by $N_t$, which is a Poisson process independent of 
$\xi$, we only need to worry about $\tau$.

For $\epsilon > 0$ such that $(1+\epsilon)(1-\kappa_-)< 1$, take $a > 0$ as in 
Lemma~\ref{lemma:momT0}. On the event $\{\mathcal{T}_n < \infty\}$, let $\hat \xi_n 
:= \xi_{\mathcal{T}_n}(~\cdot + W_{\mathcal{T}_n})$ and note that, given 
$\mathcal{G}_{\mathcal{T}_n}$, $\mathcal{T}_{n+1}-\mathcal{T}_n$ is distributed as $T_0$ 
under $\mathbb{P}_{\hat \xi_n}$. By Lemma~\ref{lemma:densitytrial}, the law of $\hat \xi_n$ 
under $\mathbb{P}_{\nu_\lambda}(~\cdot \mid \mathcal{T}_{[1,n]},W_{[0, \mathcal{T}_n]})$ 
is stochastically larger than $\bar \nu_\lambda$, and we get from Lemma~\ref{lemma:momT0}
that
\begin{equation}
\begin{aligned}
\label{pmomregtaueq1}
&\mathbb{E}_{\nu_\lambda}\Big[\mathbbm{1}_{\{\mathcal{T}_{n+1}<\infty\}}
e^{a (\mathcal{T}_{n+1}-\mathcal{T}_n)} \, \Big| \,
\mathcal{T}_{[1,n]}, W_{[0, \mathcal{T}_n]}\Big] \\
&\qquad = \mathbb{E}_{\nu_\lambda}\Big[\mathbb{E}_{\hat \xi_n}
\left[\mathbbm{1}_{\{T_0<\infty\}} e^{a T_0}\right] 
\, \Big| \, \mathcal{T}_{[1,n]}, W_{[0, \mathcal{T}_n]}\Big] \le 1+\epsilon.
\end{aligned}
\end{equation}
Using this bound, estimate
\begin{equation}
\begin{aligned}
\mathbb{E}_{\nu_\lambda}\left[\mathbbm{1}_{\{\mathcal{T}_{n+1}<\infty\}}
e^{a \mathcal{T}_{n+1}}\right] 
= \; &
\mathbb{E}_{\nu_\lambda}\Big[\mathbbm{1}_{\{\mathcal{T}_{n}<\infty\}}
e^{a \mathcal{T}_{n}} \mathbb{E}_{\nu_\lambda}\left[\mathbbm{1}_{\{\mathcal{T}_{n+1}<\infty\}}
e^{a (\mathcal{T}_{n+1}-\mathcal{T}_n)} \, \big| \, \mathcal{T}_n\right]\Big] \\
\le \; & (1+\epsilon)\mathbb{E}_{\nu_\lambda}\left[\mathbbm{1}_{\{\mathcal{T}_n<\infty\}}
e^{a \mathcal{T}_n}\right],
\end{aligned}
\end{equation}
so that, by induction,
\begin{equation}
\label{pmomregtaueq3}
\mathbb{E}_{\nu_\lambda}\left[\mathbbm{1}_{\{\mathcal{T}_n<\infty\}}
e^{a \mathcal{T}_n}\right] \le (1+\epsilon)^n.
\end{equation}
Using Lemma~\ref{lemma:distreg}, write, for $n \in \N$,
\begin{equation}\label{Kgeom}
\mathbb{P}_{\nu_\lambda}\left( K \ge n+1\right) 
= \mathbb{P}_{\nu_\lambda}\left( \mathcal{T}_{n}<\infty, 
F_{\mathcal{T}_n} < \infty \right) 
= (1-\kappa)\mathbb{P}_{\nu_\lambda}\left( K \ge n \right)
\end{equation}
to note that $K$ has distribution GEO($\kappa$). To conclude, use 
\eqref{pmomregtaueq3}--\eqref{Kgeom} to write
\begin{equation}
\label{pmomregtaueq4}
\begin{array}{ll}
\vspace{0.2cm}
\E_{\nu_\lambda}\left[e^{\frac{a}{2}\tau} \right] 
& = \sum_{n\in\N} 
\E_{\nu_\lambda}\left[\mathbbm{1}_{\{K=n\}}e^{\frac{a}{2}\mathcal{T}_n} \right] 
= \sum_{n\in\N} \E_{\nu_\lambda}\left[\mathbbm{1}_{\{K=n\}}
\mathbbm{1}_{\{\mathcal{T}_n < \infty\}}e^{\frac{a}{2}\mathcal{T}_n} \right] \\
\vspace{0.2cm}
& \le \sum_{n\in\N} \P_{\nu_\lambda}\left(K=n\right)^{\frac{1}{2}}
\E_{\nu_\lambda}\left[\mathbbm{1}_{\{\mathcal{T}_n< \infty\}}
e^{a \mathcal{T}_n} \right]^{\frac{1}{2}} \\
& \le (1-\kappa_-)^{-\frac{1}{2}}\sum_{n\in\N} 
\left(\sqrt{(1-\kappa_-)(1+\epsilon)}\right)^n < \infty,
\end{array}
\end{equation} 
where in the second line we use the Cauchy-Schwarz inequality.
\end{proof}


\subsection{Continuity of the speed and the volatility}
\label{subsec:speedcont}

Given $\lambda_- \le \lambda_+ $ in $(\lambda_W, \infty)$ and $(\lambda_n)_{n \in \N}$, 
$\lambda_* \in [\lambda_-, \lambda_+]$ such that either $\lambda_n \uparrow \lambda_*$  
or $\lambda_n \downarrow \lambda_*$ as $n\to\infty$, we can simultaneously construct 
systems with infection rates $(\lambda_n)_{n \in \N}$, $\lambda_*$ and $\lambda_{\pm}$, 
starting from equilibrium, with a single graphical representation in the standard fashion, 
taking a monotone sequence of Poisson processes for infection events and coupling the 
initial configurations monotonically. For $n \in \N \cup \{*,+,-\}$, denote by $\Lambda^n 
:= (\xi^n_0,H,I^n,N,U)$ the system with infection rate $\lambda_n$, and by $\mathbb{P}$ 
their joint law. In the following, we will use a superscript $n$ to indicate functionals 
of $\Lambda^n$.

In view of \eqref{formulasreg} and Proposition~\ref{thm:momregtau}, in order to prove 
convergence of $v(\lambda_n)$ and $\sigma(\lambda_n)$ it is enough to prove convergence 
in distribution of $\Gamma^n$ and of $(W^n_{\tau^n},\tau^n)\mathbbm{1}_{\Gamma^n}$. 
The main step to achieve this will be to approximate relevant random variables with 
uniformly large probability by random variables depending on bounded regions of the 
graphical representation.

Note that, by monotonicity and continuity of $\lambda\mapsto\rho_\lambda$ (see 
Liggett~\cite{Li85} Chapter VI, Theorem 1.6),
\begin{equation}
\label{convxi0}
\lim_{n \to \infty} \xi^n_0(x) = \xi^*_0(x) \quad 
\forall \; x \in \Z \quad \mathbb{P} \text{-a.s.}
\end{equation}
Recall the definitions of $F_0$, $\mathcal{T}_k$ and $K$ in \eqref{defregF}, 
\eqref{defregcurlyT1}--\eqref{defregcurlyTk} and \eqref{defregK}, respectively.
For $n \in \N \cup \{*\}$ and $k \in \N$, let
\begin{equation}
\label{defGammak}
\Gamma^{n}_k := \Big\{\xi^{n}_0(0)=1, W^{n}_s \in [L^{n}_s(0), R^{n}_s(0)]  \; 
\forall \; s \in [0,\mathcal{T}^{n}_k] \cap \R \Big\},
\end{equation}
so that $\Gamma^{n} = \Gamma^{n}_k$ on $\{K^{n} = k\}$ as in \eqref{gammainGtau}.

\begin{proposition}
\label{contprop1}
For every $k \in \N$, $(W^n_{\mathcal{T}^n_k},\mathcal{T}^n_k, \mathbbm{1}_{\Gamma^n_k})
\mathbbm{1}_{\{\mathcal{T}^n_k<\infty\}}$, $\mathbbm{1}_{\{\mathcal{T}^n_k < \infty\}}$ 
and $\mathbbm{1}_{\{F^n_0<\infty\}}$ converge in probability as $n \to \infty$ to the 
corresponding functionals of $\Lambda^*$.
\end{proposition}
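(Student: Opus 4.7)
The plan is to combine a truncation argument with pathwise convergence of the graphical representations restricted to bounded space-time windows. The central observation is that, on a high-probability event, each of the random variables in the proposition is a measurable function of $(\xi^n_0, H, I^n, N, U)$ restricted to some rectangle $[-M, M] \times [0, T]$; once so restricted, the monotone Poisson coupling forces exact agreement between the systems with probability tending to one.

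First, from Lemma~\ref{lemma:momT0} and the proof of Proposition~\ref{thm:momregtau}, I have a uniform-in-$n \in \N \cup \{*\}$ exponential moment for $\mathcal{T}^n_k$ on $\{\mathcal{T}^n_k < \infty\}$; since $F^n_0 \leq T^n_0$ whenever $F^n_0 < \infty$, Lemma~\ref{lemma:momT0}(a) yields the same for $F^n_0$ on $\{F^n_0 < \infty\}$. Given $\epsilon > 0$, I can therefore pick $T < \infty$ so large that
\[
\sup_{n \in \N \cup \{*\}} \mathbb{P}\bigl(\{T < \mathcal{T}^n_k < \infty\} \cup \{T < F^n_0 < \infty\}\bigr) < \epsilon.
\]
Next, using $|W^n_s| \le N_s$, large deviation bounds for the rightmost/leftmost infections (Liggett~\cite{Li85}, Chapter VI, Corollary 3.22) applied to the clusters $[L^n_{t,s}, R^n_{t,s}]$ and the rightmost infection processes $r^n_{t,s}(z)$, and the fact that each leftward process $Y^n_{t,\cdot}$ moves at Poisson rate $\lambda$, I choose $M < \infty$ so that, with probability at least $1 - \epsilon$ uniformly in $n$, the truncated quantities $\mathcal{T}^n_k \mathbbm{1}_{\{\mathcal{T}^n_k \le T\}}$, $W^n_{\mathcal{T}^n_k} \mathbbm{1}_{\{\mathcal{T}^n_k \le T\}}$, $\mathbbm{1}_{\Gamma^n_k \cap \{\mathcal{T}^n_k \le T\}}$ and $\mathbbm{1}_{\{F^n_0 \le T\}}$ are all measurable with respect to the restriction of $(\xi^n_0, H, I^n, N, U)$ to $[-M, M] \times [0, T]$.

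On this bounded window, the process $H$ is common to all systems and has a.s.\ finitely many events; the monotone coupling of $I^n$ to $I^*$ (via thinning of a common Poisson process of rate $\lambda_+$) makes the symmetric difference $I^n \triangle I^*$ a Poisson set of mean $|\lambda_n - \lambda_*| \cdot 2M T \to 0$, so $\mathbb{P}(I^n = I^* \text{ on } [-M, M] \times [0, T]) \to 1$. Combined with the pointwise convergence $\xi^n_0(x) \to \xi^*_0(x)$ from \eqref{convxi0} (applied on the finite set $[-M, M] \cap \Z$), on this high-probability event the truncated functionals for $\Lambda^n$ coincide exactly with those for $\Lambda^*$. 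Letting $n \to \infty$ and then $\epsilon \downarrow 0$ yields convergence in probability for all the functionals listed.

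The main obstacle is the spatial truncation for $\mathcal{T}^n_k$: by definition $V^n_s(z)$ involves a supremum over the entire half-line $\Z_{\le z-1}$ of locations carrying an infection at time $s$, and the trial-time construction chains this with the backward path $Y^n_{s, F^n_s}$. A geometric argument in the spirit of Lemma~\ref{lemma:momT0} is needed to show, uniformly in $n$, that before time $T$ only infections starting within a bounded (random, but exponentially tight) distance from the path of $W^n$ can contribute; this uses the uniform separation $|v_0| \vee |v_1| < \iota(\lambda_-)$ guaranteed by $\lambda_- > \lambda_W$ together with large-deviation estimates on $r^n_{t,s}(z)$ and $Y^n_{s,\cdot}$. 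Once the truncation box is fixed, the rest of the proof reduces to the standard continuity of indicator-valued functionals of a marked Poisson process on a bounded box under monotone convergence of the intensities.
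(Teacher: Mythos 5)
Your proposal is correct and follows essentially the same route as the paper: truncate in time using the uniform exponential moments of $\mathcal{T}^n_k$ and $F^n_0$, truncate in space using a Lemma~\ref{lemma:momT0}-style wedge/shielding argument, and then use the coupled graphical representation on the resulting bounded box to obtain agreement of the functionals for $n$ large. The paper carries out the spatial truncation explicitly by constructing a random box $\mathcal{B}=[\underline{z},\overline{z}]\times[0,T]$ with a.s.\ finite boundaries built from the $Y$-, $\bar Y$-, $Z_\delta$-, $\bar Z_\delta$-processes of the systems $\lambda_\pm$ (so the box is $n$-independent, and on it the graphical representations are a.s.\ \emph{eventually equal} under the monotone coupling); your sketch correctly identifies this step as the main work and points to the right ingredients, only stopping short of constructing the shielding infection paths on both sides of the box that make the cut-off rigorous.
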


\begin{proof}
We first show that, for every fixed $T \in (0,\infty)$,
\begin{equation}
\label{contprop1eq1}
(W^n_{\mathcal{T}^n_k},\mathcal{T}^n_k,\mathbbm{1}_{\Gamma^n_k})
\mathbbm{1}_{\{\mathcal{T}^n_k \le T \}},
\quad 
\mathbbm{1}_{\{\mathcal{T}^n_k \le T\}},
\quad 
\mathbbm{1}_{\{F^n_0\le T\}},
\end{equation}
converge a.s.\ as $n \to \infty$ to the corresponding functionals of $\Lambda^*$. To that
end, let $\bar Y_{t,s}(x)$ be the increasing analogue of $Y_{t,s}(x)$ in \eqref{defregY}, 
starting from $x$ but jumping across the arrows of $I$ to the right. Let $\bar{Z}_{\delta}
(x)$, analogously to $Z_{\delta}(x)$ in \eqref{defZdelta}, be the first infected site to 
the right of $x$ whose infection spreads inside a wedge between lines of inclination 
$-(\iota+\delta)$ and $-(\iota-\delta)$. Take $\delta := \iota(\lambda_-)/2$, set 
$\underline{y} := Y^+_{0,T}(-N_T)$ and $\underline{z} := Z^-_{\delta}(\underline{y}
- \lceil (\iota(\lambda_-) +\delta) T \rceil)$. Analogously, put $\overline{y} 
:= \bar Y^+_{0,T}(N_T)$ and $\overline{z} := \bar Z^-_{\delta}(\overline{y} 
+ \lceil (\iota(\lambda_-) +\delta) T \rceil)$.

Now observe that, for any $n \in \N \cup \{*\}$, all random variables in \eqref{contprop1eq1} 
depend on $\Lambda^n$ only in the space-time box $\mathcal{B}:=[\underline{z},\overline{z}]
\times [0,T]$. Indeed, for any $0 \le t \le s \le T$, we have $L^n_{t,s}(W^n_t) \ge Y^n_{t,s}
(W^n_t) \ge y^-$ and $R^n_{t,s}(W^n_t) \le y^+$, so that $\{F^n_t \le s\}$ depends on 
$\Lambda^n$ only inside $[\underline{y},\overline{y}] \times [0,T]$. Also, there are infection 
paths from time $0$ to time $T$ inside $[\underline{z}, \underline{y})$ and $(\overline{y},
\overline{z}]$. Therefore the states of $\xi^n$ inside $[\underline{y},\overline{y}] \times 
[0,T]$ depend on $\Lambda^n$ only in $\mathcal{B}$ (see the proof of Lemma~\ref{depinclusters}). 
The same is true for $\{T^n_t \le s \}$, since any infection path needed to discover $T^n_t$ 
can be taken inside $\mathcal{B}$. Therefore, by \eqref{convxi0} (and since the graphical 
representation is a.s.\ eventually constant inside bounded space-time regions), the claim 
after \eqref{contprop1eq1} follows.

To conclude note that, because $\mathcal{T}_k \mathbbm{1}_{\{\mathcal{T}_k < \infty\}} \le 
\tau$ and $F_0\mathbbm{1}_{\{F_0 < \infty\}} \le T_0 \mathbbm{1}_{\{T_0 < \infty\}}$,
\begin{equation}
\label{contlem1eq1}
\lim_{T \to \infty} \sup_{n \in \N \cup \{*\}} 
\mathbb{P}\big(T < \mathcal{T}^n_k < \infty \; 
\text{ or } \; T < F^n_0 < \infty\big) = 0
\end{equation}
by Proposition~\ref{thm:momregtau} and Lemma~\ref{lemma:momT0}, which implies that, for large 
$T$, the random variables in the statement are equal to the ones in \eqref{contprop1eq1} with uniformly large probability.
\end{proof}

\begin{corollary}
\label{contcor1}
Let $\kappa^n$ be as in \eqref{defkappa}. Then $\lim_{n\to\infty}\kappa^n = \kappa^*$ and 
$K^n$ converges in distribution to $K^*$.
\end{corollary}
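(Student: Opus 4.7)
The plan is to reduce both convergences to Proposition~\ref{contprop1} via the key identity
\begin{equation}
\mathbb{P}(\Gamma^n) = \kappa^n \rho_{\lambda_n},
\end{equation}
already noted in the first paragraph of the proof of Proposition~\ref{thm:momregtau}. Since $\Gamma^n = \{\xi^n_0(0)=1, F^n_0=\infty\}$, it suffices to show that $\mathbb{P}(\Gamma^n) \to \mathbb{P}(\Gamma^*)$ and to divide by $\rho_{\lambda_n}$.

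First I would combine two convergences. Proposition~\ref{contprop1} gives that $\mathbbm{1}_{\{F^n_0 < \infty\}} \to \mathbbm{1}_{\{F^*_0 < \infty\}}$ in probability, hence so does the complementary indicator $\mathbbm{1}_{\{F^n_0 = \infty\}}$. Independently, \eqref{convxi0} yields $\mathbbm{1}_{\{\xi^n_0(0)=1\}} \to \mathbbm{1}_{\{\xi^*_0(0)=1\}}$ $\mathbb{P}$-a.s. Multiplying the two indicators and applying bounded convergence gives $\mathbb{P}(\Gamma^n) \to \mathbb{P}(\Gamma^*)$. Since $\lambda_n \to \lambda^*$ lies in $[\lambda_-,\lambda_+] \subset (\lambda_W,\infty) \subset (\lambda_c,\infty)$, the continuity of $\lambda \mapsto \rho_\lambda$ (Liggett~\cite{Li85}, Chapter VI, Theorem 1.6) implies $\rho_{\lambda_n} \to \rho_{\lambda^*} > 0$. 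Dividing $\mathbb{P}(\Gamma^n) = \kappa^n \rho_{\lambda_n}$ by $\rho_{\lambda_n}$ and passing to the limit yields $\kappa^n \to \kappa^*$.

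For the weak convergence of $K^n$, the recursion \eqref{Kgeom} shows that, under $\mathbb{P}_{\nu_{\lambda_n}}$, $K^n$ is geometric with parameter $\kappa^n$. Since $\kappa^n \to \kappa^* \in (0,1]$ (in fact $\kappa^* \ge \kappa_- > 0$ by the uniform lower bound noted in the proof of Proposition~\ref{thm:momregtau}), the corresponding pmfs converge pointwise on $\N$, which gives convergence in distribution $K^n \Longrightarrow K^*$.

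There is no serious obstacle: the work was done in Proposition~\ref{contprop1}, and the only care needed is to check that the bounded indicator $\mathbbm{1}_{\Gamma^n}$ converges in probability (which follows from the continuous mapping theorem applied to the pair of indicators) and that $\rho_{\lambda^*}$ is strictly positive so that division is legitimate.
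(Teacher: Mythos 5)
Your proof is correct and takes essentially the same route as the paper, whose proof is just a one-line pointer to Proposition~\ref{contprop1}, \eqref{Kgeom} and the definition of $\kappa$; you have simply spelled out the intermediate step, namely that Proposition~\ref{contprop1} (for $\mathbbm{1}_{\{F^n_0<\infty\}}$) together with \eqref{convxi0} gives $\mathbb{P}(\Gamma^n)\to\mathbb{P}(\Gamma^*)$, after which the identity $\mathbb{P}(\Gamma^n)=\kappa^n\rho_{\lambda_n}$ from Lemma~\ref{lemma:distreg} and continuity of $\lambda\mapsto\rho_\lambda>0$ give $\kappa^n\to\kappa^*$, and \eqref{Kgeom} upgrades this to $K^n\Longrightarrow K^*$. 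One minor stylistic point: ``continuous mapping theorem'' is overkill for multiplying a $\{0,1\}$-valued a.s.-convergent indicator by a $\{0,1\}$-valued in-probability-convergent indicator; the product trivially converges in probability, and bounded convergence then gives the convergence of probabilities, exactly as you then correctly say.
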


\begin{proof}
This follows directly from Proposition~\ref{contprop1} and the definition of $\kappa$ since, 
by \eqref{Kgeom}, $K^n$ is a geometric random variable with parameter $\kappa^n$.
\end{proof}

With these results we can conclude the proof of Theorem~\ref{LLNregen}(c).

\begin{proof}
Let $f$ be a bounded measurable function. For $k \in \N$, write
\begin{equation}
\begin{aligned}
\label{pconteq1}
\mathbb{E}\left[f(W^n_{\tau^n},\tau^n) \mathbbm{1}_{\Gamma^n} 
\mathbbm{1}_{\{K^n = k \}} \right]
&= \mathbb{E}\left[f\big(W^n_{\mathcal{T}^n_k},\mathcal{T}^n_k\big) 
\mathbbm{1}_{\Gamma^n_k} \mathbbm{1}_{\big\{\mathcal{T}^n_k < \infty, 
F_{\mathcal{T}^n_k} = \infty \big\}} \right] \\
&= \kappa^n\,\mathbb{E}\left[f\big(W^n_{\mathcal{T}^n_k},
\mathcal{T}^n_k\big) \mathbbm{1}_{\Gamma^n_k} 
\mathbbm{1}_{\left\{\mathcal{T}^n_k < \infty \right\}} \right]\\
&\xrightarrow[]{n \to \infty} 
\kappa^* \mathbb{E}\left[f\big(W^*_{\mathcal{T}^*_k},\mathcal{T}^*_k\big) 
\mathbbm{1}_{\Gamma^*_k} \mathbbm{1}_{\left\{\mathcal{T}^*_k < \infty \right\}} \right] \\
&= \mathbb{E}\left[f(W^*_{\tau^*},\tau^*) \mathbbm{1}_{\Gamma^*} 
\mathbbm{1}_{\{ K^* = k \}} \right],
\end{aligned}
\end{equation}
where for the second and the third equality we use Lemma~\ref{lemma:distreg} and the 
strong Markov property, and for the convergence we use Proposition~\ref{contprop1} and 
Corollary~\ref{contcor1}. Therefore
\begin{equation}
\label{pconteq2}
\begin{aligned}
&\left| \mathbb{E}\left[f(W^n_{\tau^n},\tau^n) \mathbbm{1}_{\Gamma^n} \right] 
- \mathbb{E}\left[f(W^*_{\tau^*},\tau^*) \mathbbm{1}_{\Gamma^*} \right] \right| \\ 
&\qquad \le
\| f \|_{\infty}\big\{ \mathbbm{P}(K^n > M) + \mathbbm{P}(K^* > M) \big\}\\ 
&\qquad\qquad
+ \sum_{k=1}^{M} \left| \mathbb{E}\left[f(W^n_{\tau^n},\tau^n) 
\mathbbm{1}_{\Gamma^n} \mathbbm{1}_{\{K^n = k\}} \right] 
- \mathbb{E}\left[f(W^*_{\tau^*},\tau^*) \mathbbm{1}_{\Gamma^*} 
\mathbbm{1}_{\{K^* = k\}} \right] \right|,
\end{aligned}
\end{equation}
and we conclude by taking $n \to \infty$, using Corollary~\ref{contcor1} 
and \eqref{pconteq1}, and taking $M \to \infty$.
\end{proof}


\end{document}